\documentclass{amsart}
\usepackage[english]{babel}
\usepackage{amsmath,amsthm,amssymb,amsfonts}
\usepackage{hyperref}

\numberwithin{equation}{section}

\newtheorem{thm}{Theorem}[section]
\newtheorem{prp}[thm]{Proposition}
\newtheorem{lem}[thm]{Lemma}
\newtheorem{cor}[thm]{Corollary}
\theoremstyle{remark}

\newcommand{\tc}{\,:\,}                             

\newcommand{\id}{\mathrm{id}}                       
\newcommand{\chr}{\chi}                             
\DeclareMathOperator{\supp}{\mathrm{supp}}          

\newcommand{\R}{\mathbb{R}}                         
\newcommand{\N}{\mathbb{N}}                         
\newcommand{\C}{\mathbb{C}}                         

\newcommand{\D}{\mathcal{D}}                        
\newcommand{\E}{\mathcal{E}}                        
\newcommand{\Cv}{Cv}                                

\newcommand{\Diff}{\mathfrak{D}}                    

\newcommand{\LA}{\mathrm{L}}

\newcommand{\RA}{\mathrm{R}}

\newcommand{\lie}{\mathfrak}                        
\DeclareMathOperator{\Aut}{\mathrm{Aut}}            
\DeclareMathOperator{\tr}{\mathrm{tr}}              
\DeclareMathOperator{\Span}{\mathrm{span}}          

\newcommand{\Bdd}{\mathcal{B}}                      

\newcommand{\UEnA}{\mathrm{U}}                      



\newcommand{\Gelf}{\mathcal{G}}                     
\newcommand{\Kern}{\mathcal{K}}                     

\newcommand{\GS}{\mathfrak{G}}                      

\newcommand{\VV}{\mathcal{V}}                       
\newcommand{\HH}{\mathcal{H}}                       
\newcommand{\Alg}{\mathcal{A}}                      

\newcommand{\JJ}{\mathcal{J}}                       
\newcommand{\II}{\Gamma^1}                          
\newcommand{\IS}{\Gamma^2}                          
\newcommand{\PP}{\mathcal{P}}                       



\newcommand{\evmap}{\vartheta}
\renewcommand{\ell}{\varrho}

\begin{document}
\title[Spectral theory for algebras of differential operators]{Spectral theory for commutative algebras of differential operators on Lie groups}
\author{Alessio Martini}

\begin{abstract}
The joint spectral theory of a system of pairwise commuting self-adjoint left-invariant differential operators $L_1,\dots,L_n$ on a connected Lie group $G$ is studied, under the hypothesis that the algebra generated by them contains a ``weighted subcoercive operator'' of ter Elst and Robinson (J.\ Funct.\ Anal.\ 157 (1998) 88--163). The joint spectrum of $L_1,\dots,L_n$ in every unitary representation of $G$ is characterized as the set of the eigenvalues corresponding to a particular class of (generalized) joint eigenfunctions of positive type of $L_1,\dots,L_n$. Connections with the theory of Gelfand pairs are established in the case $L_1,\dots,L_n$ generate the algebra of $K$-invariant left-invariant differential operators on $G$ for some compact subgroup $K$ of $\Aut(G)$.
\end{abstract}

\maketitle

\section{Introduction}
Let $L_1,\dots,L_n$ pairwise commuting smooth linear differential operators on a smooth manifold $X$, which are formally self-adjoint with respect to some smooth measure $\mu$. Do these operators admit a joint functional calculus on $L^2(X,\mu)$? In that case, what is the relationship between the joint $L^2$ spectrum of $L_1,\dots,L_n$ and their joint smooth (possibly non-$L^2$) eigenfunctions on $X$?

A joint functional calculus for $L_1,\dots,L_n$ is given, via spectral integration, by a \emph{joint spectral resolution} $E$, i.e., a resolution of the identity of $L^2(X,\mu)$ on $\R^n$ such that
\[\int_{\R^n} \lambda_j \,dE(\lambda_1,\dots,\lambda_n)\]
is a self-adjoint extension of $L_j$ for $j=1,\dots,n$. Existence and uniqueness of $E$ are related to the so-called ``domain problems'', such as essential self-adjointness of $L_1,\dots,L_n$ and strong commutativity of their self-adjoint extensions.

Once a joint spectral resolution $E$ is fixed, the theory of eigenfunction expansions (see, e.g., \cite{berezanskii_expansions_1968,maurin_general_1968}) yields the existence, for $E$-almost every $\lambda = (\lambda_1,\dots,\lambda_n)$ in the joint $L^2$ spectrum $\Sigma = \supp E$ of $L_1,\dots,L_n$, of a corresponding generalized joint eigenfunction $\phi$, which (under some hypoellipticity hypothesis on $L_1,\dots,L_n$) belongs to the space $\E(X)$ of smooth functions on $X$ and satisfies
\begin{equation}\label{eq:eigenfunction}
L_j \phi = \lambda_j \phi \qquad\text{for $j=1,\dots,n$.}
\end{equation}
However, from the general theory, neither it is clear for which $\lambda \in \Sigma$ there does exist a corresponding smooth eigenfunction $\phi$, nor for which $\phi \in \E(X)$ satisfying \eqref{eq:eigenfunction} the corresponding $\lambda$ does belong to $\Sigma$.

In this paper, we restrict to the case of $X = G$ being a connected Lie group, with right Haar measure $\mu$, and left-invariant differential operators $L_1,\dots,L_n$. In this context, the problem of existence and uniqueness of a joint spectral resolution can be stated for the operators $d\varpi(L_1),\dots,d\varpi(L_n)$ in every unitary representation $\varpi$ of $G$ --- the case of the operators $L_1,\dots,L_n$ on $L^2(G)$ corresponding to the (right) regular representation of $G$ --- with a possibly different joint spectrum $\Sigma_\varpi$ for each representation $\varpi$.

Via techniques due to Nelson and Stinespring \cite{nelson_representation_1959}, we show in \S\ref{subsection:resolution} that a sufficient condition for the essential self-adjointness and the existence of a joint spectral resolution in every unitary representation is that the algebra generated by $L_1,\dots,L_n$ contains a \emph{weighted subcoercive operator}. This class of hypoelliptic left-invariant differential operators, defined by ter Elst and Robinson \cite{ter_elst_weighted_1998} in terms of a \emph{homogeneous contraction} of the Lie algebra $\lie{g}$ of $G$, is large enough to contain positive elliptic operators, sublaplacians and positive Rockland operators (see \S\ref{section:wsub} for details).

Under the same hypotheses on $L_1,\dots,L_n$, we prove that \emph{every} element of the joint spectrum $\Sigma$ corresponds to a joint (smooth) eigenfunction $\phi$ of $L_1,\dots,L_n$ which is a function of \emph{positive type} on $G$, i.e., of the form
\begin{equation}\label{eq:positivetypeintro}
\phi(x) = \langle \pi(x) v, v \rangle
\end{equation}
for some unitary representation $\pi$ of $G$ on a Hilbert space $\HH$ and some cyclic vector $v \in \HH \setminus \{0\}$. More precisely, in \S\ref{section:eigenfunctions} we show that:
\begin{enumerate}
\item[(a)] for every unitary representation $\varpi$ of $G$, $\Sigma_\varpi$ coincides with the set of the eigenvalues relative to the joint eigenfunctions of $L_1,\dots,L_n$ of the form \eqref{eq:positivetypeintro} with $\pi$ (irreducible and) \emph{weakly contained} in $\varpi$;
\item[(b)] if $G$ is amenable, then $\Sigma$ coincides with the set of the eigenvalues relative to \emph{all} the joint eigenfunctions of positive type;
\item[(c)] if $L^1(G)$ is a symmetric Banach $*$-algebra, then $\Sigma$ coincides with the set of the eigenvalues relative to all the \emph{bounded} joint eigenfunctions.
\end{enumerate}
Recall that, if $G$ has polynomial growth, then $L^1(G)$ is symmetric, and this in turn implies that $G$ is amenable (see \cite{palmer_banach_2001}). Notice moreover that, on non-amenable groups, the previous characterization (b) of $\Sigma$ cannot be expected, because of the \emph{spectral-gap} phenomenon (cf.\ \cite{varopoulos_hardy_1995}).

If there exists a compact group $K$ of automorphisms of $G$ such that the operators $L_1,\dots,L_n$ generate the algebra of left-invariant $K$-invariant differential operators on $G$, then the theory of \emph{Gelfand pairs} applies (see, e.g., \cite{gangolli_harmonic_1988,wolf_harmonic_2007}), and the joint spectral theory of $L_1,\dots,L_n$ is related to the spectral theory of the (convolution) algebra of $K$-invariant $L^1$ functions on $G$, i.e., to the \emph{spherical Fourier transform}. The ``Gelfand pair'' condition, however, is quite restrictive on the groups $G$ and the systems $L_1,\dots,L_n$ of operators which can be considered. Under our weaker hypotheses, we develop in \S\ref{section:algebras} a notion analogous to the spherical Fourier transform, with several similar features (Plancherel formula, Riemann-Lebesgue lemma, ...). Finally, in \S\ref{section:examples} some examples are considered, involving homogeneous groups and direct products, and moreover we show how (part of) the theory of Gelfand pairs on Lie groups fits in our general setting.

Some of the results presented here can be found in the literature in the case of a single operator ($n=1$), particularly for a sublaplacian (see, e.g., \cite{hulanicki_subalgebra_1974,hulanicki_commutative_1975,christ_multipliers_1991,ludwig_sub-laplacians_2000}), often as preliminaries for spectral multiplier theorems. It appears that our setting is suited for developing a theory of joint spectral multipliers for a family of commuting left-invariant differential operators on a Lie group (cf.\ \cite{martini_multipliers_2010}).

\subsection*{Notation}
For a topological space $X$, we denote by $C(X)$ the space of continuous (complex-valued) functions on $X$, whereas $C_0(X)$ and $C_c(X)$ are the subspaces of continuous functions vanishing at infinity and of continuous functions with compact support respectively. If $X$ is a smooth manifold, then $\E(X)$ and $\D(X)$ are the spaces of smooth functions and of compactly supported smooth functions on $X$; correspondingly, $\D'(X)$ and $\E'(X)$ are the spaces of distributions and of compactly supported distributions.

If $G$ is a Lie group, $f$ is a complex-valued function on $G$ and $x,y \in G$, then we set
\[\LA_x f(y) = f(x^{-1}y) , \qquad \RA_x f(y) = f(y x).\]
$\RA : x \mapsto \RA_x$ is the (right) regular representation of $G$. For a fixed right Haar measure $\mu$ on $G$, $\RA_x$ is an isometry of $L^p(G)$ for $1 \leq p \leq \infty$. With respect to such measure, convolution and involution of functions take the form
\[f * g(x) = \int_G f(xy^{-1}) g(y) \,dy, \qquad f^*(x) = \Delta(x) \overline{f(x^{-1})}\]
(where $\Delta$ is the modular function) and we set, for every representation $\pi$ of $G$,
\[\pi(f) = \int_G f(x) \, \pi(x^{-1}) \,dx,\]
so that in particular
\[\RA(g)f = f * g, \qquad \pi(f * g) = \pi(g) \pi(f), \qquad \pi(Df) = d\pi(D) \pi(f)\]
for every left-invariant differential operator $D$.

\section{Rockland and weighted subcoercive operators}\label{section:wsub}

This section is devoted to summarizing and amplifying some of the results of \cite{ter_elst_weighted_1998}, which are the basis for ours. In order to do this, however, it is useful first to recall some definitions and facts about homogeneous Lie groups; for more detailed expositions, we refer to the books \cite{folland_hardy_1982,goodman_nilpotent_1976,varadarajan_lie_1974}.

\subsection{Homogeneous groups and Rockland operators}
A \emph{homogeneous Lie algebra} is a Lie algebra $\lie{g}$ with a fixed family of automorphic dilations
\[\delta_t = e^{B \log t} \qquad \text{for $t > 0$,}\]
where $B$ is a diagonalizable derivation of $\lie{g}$ with strictly positive eigenvalues. The eigenspaces $W_\lambda$ of the derivation $B$ determine a direct-sum decomposition
\begin{equation}\label{eq:homogeneousalgebra}
\lie{g} = \bigoplus_{\lambda \in \R} W_\lambda = W_{\lambda_1} \oplus \dots \oplus W_{\lambda_k}
\end{equation}
(where $\lambda_k > \dots > \lambda_1 > 0$ are the eigenvalues of $B$) such that
\[[W_\lambda,W_{\lambda'}] \subseteq W_{\lambda+\lambda'} \qquad\text{for all $\lambda,\lambda' \in \R$.}\]
Every homogeneous Lie algebra $\lie{g}$ is \emph{nilpotent}, i.e., the \emph{descending central series}
\[\lie{g}_{[1]} = \lie{g}, \qquad \lie{g}_{[n+1]} = [\lie{g},\lie{g}_{[n]}]\]
is eventually null; in particular, $\lie{g}$ can be identified with the connected, simply connected Lie group $G$ whose Lie algebra is $\lie{g}$.

Let $G = \lie{g}$ be a homogeneous Lie group, with dilations $\delta_t = e^{B \log t}$. A \emph{homogeneous norm} on $G$ is a continuous function $|\cdot|_\delta : G \to \left[0,+\infty\right[$ such that 
\begin{itemize}
\item $|x|_\delta = 0$ if and only if $x$ is the identity of $G$;
\item $|x^{-1}|_\delta = |x|_\delta$;
\item $|\delta_t(x)|_\delta = t|x|_\delta$ for all $t > 0$.
\end{itemize}
Two homogeneous norms $|\cdot|_\delta$, $|\cdot|_\delta'$ on $G$ are always equivalent:
\[C^{-1} |x|_\delta \leq |x|_\delta' \leq C |x|_\delta \qquad\text{for all $x \in G$,}\]
for some constant $C \geq 1$ (see \cite{goodman_filtrations_1977}, \S3,  or \cite{goodman_nilpotent_1976}, \S1.2); moreover, there exists (see \cite{hebisch_smooth_1990}) a homogeneous norm $|\cdot|_\delta$ which is smooth off the origin and subadditive:
\[|xy|_\delta \leq |x|_\delta + |y|_\delta \qquad \text{for all $x,y \in G$.}\]
The quantity
\[Q_\delta = \tr B = \sum_{j=1}^k \lambda_j \dim W_j\]
is called the \emph{homogeneous dimension} of $\lie{g}$; in fact, we have
\[\mu(\delta_t(U)) = t^{Q_\delta} \mu(U)\]
for every measurable $U \subseteq \lie{g}$. Modulo rescaling (i.e., replacing $t$ with $t^c$ for some $c > 0$), one can suppose that $\lambda_1 \geq 1$, which shall be always understood in the rest of the paper, so that in particular $Q_\delta \geq \dim \lie{g}$.

The \emph{degree of polynomial growth} (or \emph{dimension at infinity}) of $G$ is the unique $Q_G \in \N$ such that
\[\mu(K^n) \sim n^{Q_G}\]
for every compact neighborhood $K = K^{-1}$ of the identity of $G$. This definition does not depend on the chosen dilations, and in fact it makes sense for every connected Lie group $G$ (with polynomial growth); for a nilpotent group $G$, we have the following characterization, where
\[\tau_K(x) = \min\{ n \in \N \tc x \in K^n \}.\]

\begin{prp}[Guivarc'h]\label{prp:nilpotentgrowth}
Suppose that $G$ is $s$-step nilpotent (i.e., $\lie{g}_{[s]} \neq 0 = \lie{g}_{[s+1]}$) and let $V_j$ be a complement of $\lie{g}_{[j+1]}$ in $\lie{g}_{[j]}$ for $j=1,\dots,s$. Choose moreover norms $|\cdot|_j$ on the $V_j$ and set
\begin{equation}\label{eq:growthnorm}
|x| = \sum_{j=1}^s |x_j|_j^{1/j},
\end{equation}
where $x = x_1 + \dots + x_s$ is the decomposition of $x \in \lie{g} = V_1 \oplus \dots \oplus V_s$. Then
\[|x| \sim \tau_K(x) \qquad\text{for large $x \in G$,}\]
for every compact neighborhood $K = K^{-1}$ of the identity. In particular, $G$ has polynomial growth of degree
\[Q_G = \sum_{j=1}^s j \dim V_j = \sum_{j=1}^s \dim \lie{g}_{[j]} \geq \dim \lie{g}.\]
\end{prp}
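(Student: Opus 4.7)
The plan is to prove the pointwise equivalence $|x|\sim\tau_K(x)$ via two matching one-sided bounds, from which the growth rate follows by a volume count. Fix a basis $(e_\alpha)_\alpha$ of $\lie{g}$ adapted to the decomposition $V_1\oplus\cdots\oplus V_s$, with $e_\alpha\in V_{w(\alpha)}$, and identify $x\in G$ with its exponential coordinates $(x_\alpha)_\alpha$. The key computational input is the Campbell--Baker--Hausdorff formula in these coordinates: since any iterated bracket $[e_{\beta_1},[\ldots,e_{\beta_m}]]$ lies in $\lie{g}_{[\sum_i w(\beta_i)]}=V_{\sum_i w(\beta_i)}\oplus\cdots\oplus V_s$, the product law takes the form
\[
(xy)_\alpha = x_\alpha + y_\alpha + P_\alpha\bigl(x_\beta,y_\beta : w(\beta)<w(\alpha)\bigr),
\]
where each monomial of $P_\alpha$ has weighted degree $\leq w(\alpha)$ (assigning weight $w(\beta)$ to $x_\beta$ and $y_\beta$) and is nontrivial in both groups of variables.

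For the upper bound $|x|\leq C\tau_K(x)+C'$, I prove by induction on $w(\alpha)$ that $|x_\alpha|\leq Cn^{w(\alpha)}$ whenever $x\in K^n$. When $w(\alpha)=1$ there is no BCH correction, so $(k_1\cdots k_n)_\alpha$ is just the sum of the bounded quantities $(k_i)_\alpha$, which is $O(n)$. When $w(\alpha)>1$, expand $(k_1\cdots k_n)_\alpha=(k_1\cdots k_{n-1})_\alpha+(k_n)_\alpha+P_\alpha$: each monomial of $P_\alpha$ contains at least one bounded factor from $k_n$ of weight $\geq 1$, so the remaining factors have combined weighted degree $\leq w(\alpha)-1$; applying the outer induction to the $\beta$-coordinates of $k_1\cdots k_{n-1}$ with $w(\beta)<w(\alpha)$ yields $|P_\alpha|\leq C(n-1)^{w(\alpha)-1}$, and the telescoping recursion gives $|x_\alpha|\leq Cn^{w(\alpha)}$. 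Summing over basis vectors with $w(\alpha)=j$ and taking $j$-th roots produces the desired bound.

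For the lower bound $\tau_K(x)\leq C|x|+C'$, I induct on the nilpotency step $s$. When $s=1$, $G$ is abelian and the claim is immediate. In general, project to $G/\exp(\lie{g}_{[s]})$, which is $(s-1)$-step nilpotent: the induction hypothesis writes the projection of $x$ as a product of $N\leq C|x|+C'$ images of elements of $K$, so $x=k_1\cdots k_N\cdot z$ with $z=\exp(\xi_s)\in\exp(V_s)$ central, and the upper bound already proved forces $|\xi_s|_s\leq C''(|x|^s+1)$. Since $V_s=\lie{g}_{[s]}$ is spanned by $s$-fold Lie brackets, a compactness/homogeneity argument on the unit sphere of $V_s$ represents $\xi_s$ as a sum of boundedly many basic brackets $[Y_{\ell,1},[\ldots,[Y_{\ell,s-1},Y_{\ell,s}]]]$ with $|Y_{\ell,k}|\leq C|\xi_s|_s^{1/s}$. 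The exact group-commutator identity
\[
[\exp Y_1,[\ldots,[\exp Y_{s-1},\exp Y_s]]] = \exp[Y_1,[\ldots,[Y_{s-1},Y_s]]],
\]
which is exact because $\lie{g}_{[s+1]}=0$, combined with the commutativity of $\exp(V_s)$, then expresses $z$ as a product of $O(|\xi_s|_s^{1/s})=O(|x|)$ elements of $K$.

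The main obstacle is controlling the constants in the lower bound: the commutator identity above is exact only when the output of the bracket lies in the center, while at intermediate strata the analogous identity would carry errors in $\lie{g}_{[j+1]}$ that one would have to handle separately. The induction on $s$ sidesteps this by always pushing the residue into the center of a smaller quotient, so only the exact identity is needed at each stage. Finally, the polynomial growth degree follows from a direct volume count: the Lebesgue measure in exponential coordinates of the ball $\{x:|x|\leq r\}$ is comparable to $\prod_j r^{j\dim V_j}=r^{\sum_j j\dim V_j}$, so the inclusions $\{|x|\leq cn\}\subseteq K^{n+C}\subseteq\{|x|\leq C'n\}$ yield $\mu(K^n)\sim n^{\sum_j j\dim V_j}$, and the identity $\sum_j j\dim V_j=\sum_j\dim\lie{g}_{[j]}$ holds by Abel summation.
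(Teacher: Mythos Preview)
The paper does not give its own proof of this proposition: it simply refers to Guivarc'h's original article \cite{guivarch_croissance_1973}, specifically Th\'eor\`eme~II.1 and Lemme~II.1. Your argument is essentially that classical proof --- the upper bound by iterating the Campbell--Baker--Hausdorff formula with a weight-graded induction, the lower bound by induction on the nilpotency step together with the exact group-commutator identity in the center --- and the details you supply (the one-parameter subgroup trick $\exp Y=(\exp(Y/m))^m$ to write central factors as short words, the telescoping sum $\sum_j j\dim V_j=\sum_j\dim\lie{g}_{[j]}$) are all correct.
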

\begin{proof}
See \cite{guivarch_croissance_1973}, particularly the proofs of Th\'eor\`eme~II.1 and Lemme~II.1.
\end{proof}

A homogeneous Lie algebra $\lie{g}$ as in \eqref{eq:homogeneousalgebra} is \emph{stratified} if $W_1$ generates $\lie{g}$ as a Lie algebra (this implies that $\lambda_1,\dots,\lambda_k$ are integers). If $G = \lie{g}$ is stratified, then in Proposition~\ref{prp:nilpotentgrowth} one can take $V_j = W_j$, so that \eqref{eq:growthnorm} is a homogeneous norm on $G$ and $Q_G = Q_\delta$. For a general homogeneous Lie group, we have the following result (cf.\ also \cite{jenkins_dilations_1979}).

\begin{prp}\label{prp:quasiequivalence}
Let $G$ be a homogeneous Lie group, with dilations $\delta_t$ and homogeneous dimension $Q_\delta$, and let $|\cdot|_\delta$ be a homogeneous norm on $G$. Let $|\cdot|$ be defined as in \eqref{eq:growthnorm}, and $Q_G$ be the degree of polynomial growth of $G$.
\begin{itemize}
\item[(i)] One has $Q_\delta \geq Q_G$, with equality if and only if $G$ is stratified.
\item[(ii)] There exist $a,b,c > 0$ such that
\begin{equation}\label{eq:quasiequivalence}
c^{-1} |x|^a_\delta \leq |x| \leq c |x|_\delta^b \qquad\text{for $x \in G$ large}
\end{equation}
(i.e., off a compact neighborhood of the identity). Moreover, we can take $a = b = 1$ if and only if $G$ is stratified.
\end{itemize}
\end{prp}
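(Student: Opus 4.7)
The plan is to exploit the interaction between the eigenspace decomposition of $B$ and the descending central series. Since each $\delta_t$ is an automorphism, the $\lie{g}_{[j]}$ are $B$-invariant, and $B$ being diagonalizable we may choose $B$-invariant complements $V_{j,\lambda}$ of $W_\lambda \cap \lie{g}_{[j+1]}$ in $W_\lambda \cap \lie{g}_{[j]}$, producing a bigrading $\lie{g} = \bigoplus_{j,\lambda} V_{j,\lambda}$ that refines both $V_j = \bigoplus_\lambda V_{j,\lambda}$ and $W_\lambda = \bigoplus_j V_{j,\lambda}$. Since $[W_\mu, W_\nu] \subseteq W_{\mu+\nu}$, a straightforward induction yields $\lie{g}_{[j]} \subseteq \bigoplus_{\lambda \geq j\lambda_1} W_\lambda$, so $V_{j,\lambda} \neq 0$ forces $\lambda \geq j\lambda_1 \geq j$.

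For (i), this bigrading gives
\[Q_\delta = \sum_\lambda \lambda \dim W_\lambda = \sum_{j,\lambda} \lambda \dim V_{j,\lambda} \geq \sum_{j,\lambda} j \dim V_{j,\lambda} = Q_G,\]
with equality iff $\lambda = j$ whenever $V_{j,\lambda} \neq 0$. In the equality case every eigenvalue of $B$ is forced to be a positive integer, $\lambda_1 = 1$, and $V_j = W_j$, whence $\lie{g}_{[j]} = \bigoplus_{i \geq j} W_i$. To conclude that $\lie{g}$ is stratified, I would induct on $j$ to show $W_j \subseteq \langle W_1 \rangle$, noting that the $W_j$-component of any bracket $[a,b]$ with $a \in \lie{g}$ and $b \in \lie{g}_{[j-1]}$ must come from the pair $m=1$, $n=j-1$ in the decomposition, hence lies in $[W_1, W_{j-1}]$. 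The converse is immediate: in a stratified algebra $V_j = W_j$ and $\lambda_j = j$ give equality throughout.

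For (ii), adopting the homogeneous norm $|x|_\delta = \max_\lambda |x_\lambda|^{1/\lambda}$ and writing $x = \sum_{j,\lambda} x_{j,\lambda}$ with $x_{j,\lambda} \in V_{j,\lambda}$, one has $|x|_\delta \sim \max_{j,\lambda} |x_{j,\lambda}|^{1/\lambda}$ and $|x| \sim \sum_j (\max_\lambda |x_{j,\lambda}|)^{1/j}$; the inequality $j \leq \lambda$ on the nontrivial bigrading pieces then gives \eqref{eq:quasiequivalence} for $x$ outside a bounded set with, say, $a = \lambda_1$ and $b = \lambda_k$. For stratified $\lie{g}$ the collapse $V_j = W_j$, $\lambda_j = j$ reduces both expressions (up to equivalence) to $\max_j |x^{(j)}|_j^{1/j}$, so $a = b = 1$ works. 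The main obstacle is the reverse direction: if \eqref{eq:quasiequivalence} holds with $a = b = 1$, then the $|\cdot|_\delta$-ball of radius $R$ has volume $\sim R^{Q_\delta}$, whereas by Proposition~\ref{prp:nilpotentgrowth} the $|\cdot|$-ball has volume $\sim R^{Q_G}$; equivalence of the two norms then forces $Q_\delta = Q_G$, and (i) closes the loop.
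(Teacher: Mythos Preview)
Your proof is correct and close in spirit to the paper's, but organized differently. The paper argues directly from the inclusion $\lie{g}_{[n]} \subseteq \bigoplus_{\lambda \geq n} W_\lambda$ to the chain
\[
Q_G = \sum_n \dim \lie{g}_{[n]} \leq \sum_n \sum_{\lambda \geq n} \dim W_\lambda \leq \sum_j \lfloor \lambda_j \rfloor \dim W_{\lambda_j} \leq Q_\delta;
\]
in the equality case it reads off $W_n \subseteq \lie{g}_{[n]}$ and then invokes the standard fact that any set spanning $\lie{g}/[\lie{g},\lie{g}]$ generates a nilpotent $\lie{g}$. Your bigrading $V_{j,\lambda}$ makes the comparison $Q_\delta = \sum_{j,\lambda} \lambda \dim V_{j,\lambda} \geq \sum_{j,\lambda} j \dim V_{j,\lambda} = Q_G$ completely transparent, avoids the floor function, and in (ii) yields explicit exponents $a = \lambda_1$, $b = \lambda_k$, whereas the paper simply asserts that some exponents exist. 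The only place the paper is quicker is the stratification step, where your inductive $W_j \subseteq [W_1, W_{j-1}]$ is replaced by the one-line quotient observation. Both arguments close (ii) via the same volume-growth comparison $\mu(\{|x|<r\}) \sim r^{Q_G}$ versus $\mu(\{|x|_\delta<r\}) \sim r^{Q_\delta}$.
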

\begin{proof}
(i) Decompose $\lie{g}$ as in \eqref{eq:homogeneousalgebra}. Notice that the subspaces $\lie{g}_{[n]}$ composing the descending central series are characteristic ideals of $\lie{g}$; since the dilations $\delta_t$ are automorphisms, the $\lie{g}_{[n]}$ are homogeneous. A homogeneous element of $\lie{g}_{[n]}$, being the sum of $n$-fold iterated commutators of homogeneous elements of $\lie{g}$, has a homogeneity degree which must be the sum of $n$ of the homogeneity degrees $\lambda_1 < \dots < \lambda_k$ of the elements of $\lie{g}$; since all these degrees are not less than $1$, the sum is not less than $n$, therefore $\lie{g}_{[n]} \cap W_\lambda = \{0\}$ if $\lambda < n$, so that
\begin{equation}\label{eq:homogeneouscontainment}
\lie{g}_{[n]} \subseteq \bigoplus_{\lambda \geq n} W_\lambda.
\end{equation}
In particular, if $G$ is $s$-step,
\begin{equation}\label{eq:homogeneousdimchain}
Q_G = \sum_{n = 1}^s \dim \lie{g}_{[n]} \leq \sum_{n=1}^s \sum_{\lambda \geq n} \dim W_\lambda \leq \sum_{j=1}^k \lfloor \lambda_j \rfloor \dim W_{\lambda_j} \leq Q_\delta.
\end{equation}

We already know that, if $G$ is stratified, then $Q_G = Q_\delta$. Conversely, if $Q_G = Q_\delta$, then all the inequalities in \eqref{eq:homogeneousdimchain} must be equalities; this means, first of all, that the degrees $\lambda_1,\dots,\lambda_k$ are integers and, secondly, that the inclusion \eqref{eq:homogeneouscontainment} is an equality, so that $W_n \subseteq \lie{g}_{[n]}$, but then necessarily $W_1$ generates $\lie{g}$ --- i.e., $G$ is stratified.

(ii) By the definition of $|\cdot|$ and the equivalence of homogeneous norms, the inequalities \eqref{eq:quasiequivalence} follow easily.

If $G$ is stratified, then also $|\cdot|_{\delta}$ is (modulo equivalence of homogeneous norms) of the form \eqref{eq:growthnorm}, with a choice of the complements $V_j$ possibly different to the one defining $|\cdot|$; therefore, by Proposition~\ref{prp:nilpotentgrowth}, $|\cdot|_\delta$ is equivalent in the large to $|\cdot|$ (both being equivalent in the large to some $\tau_K$). Conversely, since
\[\mu_G(\{x \in G \tc |x| < r\}) \sim r^{Q_G}, \qquad \mu_G(\{x \in G \tc |x|_\delta < r\}) \sim r^{Q_\delta}\]
for $r$ large, if \eqref{eq:quasiequivalence} holds with $a = b = 1$, then necessarily $Q_G = Q_\delta$, and the conclusion follows by (i).
\end{proof}

The automorphic dilations $\delta_t$ of a homogeneous Lie algebra $\lie{g}$ extend to automorphisms $\delta_t$ of its complex universal enveloping algebra $\UEnA(\lie{g})$, which is canonically isomorphic to the algebra $\Diff(G)$ of left-invariant differential operators on $G$. An element $D \in \UEnA(\lie{g}) = \Diff(G)$ is said to be homogeneous of degree $\lambda$ if
\[\delta_t(D) = t^\lambda D \qquad\text{for all $t > 0$.}\]

A \emph{Rockland operator} on $G$ is a homogeneous left-invariant differential operator $D \in \Diff(G)$ such that, for every non-trivial irreducible unitary representation $\pi$ of $G$ on a Hilbert space $\HH$, $d\pi(D)$ is injective on the space $\HH^\infty$ of the smooth vectors of the representation. In the abelian case ($G = \R^n$), with isotropic dilations, the notion of Rockland operator reduces to that of constant-coefficient homogeneous elliptic operator on $\R^n$. In the general case, by a theorem of Helffer and Nourrigat's (see \cite{helffer_caracterisation_1979}), combined with a result by Miller (see \cite{miller_parametrices_1980,ter_elst_spectral_1997}), a homogeneous $L \in \Diff(G)$ is Rockland if and only if $L$ is \emph{hypoelliptic}, i.e., for every $u \in \D'(G)$ and every open set $\Omega \subseteq G$,
\[(Lu)|_\Omega \in \E(\Omega) \quad\Longrightarrow\quad u|_\Omega \in \E(\Omega).\]

\subsection{Weighted bases and contraction of a Lie algebra}\label{subsection:noncommutativemultiindex}
A \emph{weighted (algebraic) basis} of a Lie algebra $\lie{g}$ is a system $A_1,\dots,A_d$ of linearly independent elements of $\lie{g}$ which generate $\lie{g}$ as a Lie algebra, together with the assignment of a weight $w_j \in \left[1,+\infty\right[$ to each $A_j$ ($j=1,\dots,d$).

Fix a weighted basis on $\lie{g}$. We recall some notation from \cite{ter_elst_weighted_1998}, analogous to the multi-index notation for partial derivatives on $\R^n$, but taking care of the non-commutative structure. Let $J(d)$ be the set of finite sequences of elements of $\{1,\dots,d\}$, and $J_+(d)$ be the subset of non-empty sequences. For every $\alpha = (\alpha_1,\dots,\alpha_k) \in J(d)$, let $|\alpha|$ denote the length $k$ of $\alpha$, and set $\|\alpha\| = \sum_{j=1}^k w_{\alpha_j}$,
\[A^\alpha = A_{\alpha_1} A_{\alpha_2} \cdots A_{\alpha_n} \text{ (as an element of $\UEnA(\lie{g})$),}\]
\[A_{[\alpha]} = [[\dots[A_{\alpha_1},A_{\alpha_2}],\dots],A_{\alpha_k}] \qquad\text{if $\alpha \in J_+(d)$.}\]

The fixed weighted basis defines an (increasing) filtration on $\lie{g}$:
\[F_\lambda = \Span \{A_{[\alpha]} \tc \alpha \in J_+(d), \, \|\alpha\| \leq \lambda\} \qquad \text{for $\lambda \in \R$;}\]
we have in fact
\[[F_{\lambda},F_{\mu}] \subseteq F_{\lambda+\mu}, \qquad F_\lambda = \bigcap_{\mu > \lambda} F_\mu, \qquad \bigcup_{\lambda \in \R} F_\lambda = \lie{g}.\]
Set $F_\lambda^- = \bigcup_{\mu < {\lambda}} F_\mu$; the weighted basis is said to be \emph{reduced} if\footnote{Our definition of reduced basis is more restrictive than the definition given in \S2 of \cite{ter_elst_weighted_1998}, where it is only required that $A_j \notin F_{w_j}^-$; however, without our restriction, the fundamental Lemma~2.2 of \cite{ter_elst_weighted_1998}, which allows to extend the reduced basis to a linear basis compatible with the associated filtration $F_\lambda$, is false, as it is shown by the following example. On the free $3$-step nilpotent Lie algebra on two generators, defined by
\[[X_1,X_2] = Y, \qquad [X_1,Y] = T_1, \qquad [X_2,Y] = T_2,\]
the weighted basis $X_1,X_2,Y+T_1,T_1,T_2$, with weights $1,1,3,3,3$, is reduced according to \cite{ter_elst_weighted_1998}, but it not compatible with the associated filtration, and cannot be extended since it is already a linear basis.}
\begin{equation}\label{eq:reducedbasis}
\Span \{ A_j \tc w_j = \lambda\} \cap F_\lambda^- = \{0\} \qquad\text{for all $\lambda$.}
\end{equation}
Given a weighted basis, it is always possible to remove some elements from it, in order to obtain a reduced basis of $\lie{g}$ which defines the same filtration. A \emph{weighted Lie algebra} is a Lie algebra with a fixed reduced (weighted) basis.

Notice that, if $\lie{g}$ is a homogeneous Lie algebra, every system of linearly independent generators $A_1,\dots,A_d$ of $\lie{g}$ made of homogeneous elements, with the weights equal to the respective homogeneity degrees, is a reduced basis of $\lie{g}$; such a basis is said to be \emph{adapted} to the homogeneous structure of $\lie{g}$. A \emph{weighted homogeneous Lie algebra} is a homogeneous Lie algebra with a fixed adapted basis.

Let $\lie{g}$ be a weighted Lie algebra, and let the filtration $(F_\lambda)_{\lambda}$ be defined as before. We can then consider the associated homogeneous Lie algebra (cf.\ \cite{bourbaki_lie1}, \S II.4.3): the filtration determines a finite set of weights $\lambda_1,\dots,\lambda_k$, with
\[1 \leq \lambda_1 < \dots < \lambda_k,\]
defined by the condition $F_{\lambda_j} \neq F_{\lambda_j}^{-}$ for $j=1,\dots,k$; if we put $W_\lambda = F_\lambda / F_\lambda^-$, then
\[\lie{g}_* = \bigoplus_{\lambda \in \R} W_\lambda = W_{\lambda_1} \oplus \dots \oplus W_{\lambda_k}\]
is a homogeneous Lie algebra, with weights $\lambda_1, \dots, \lambda_k$.

Since the fixed weighted basis $A_1,\dots,A_d$ is reduced, the corresponding weights $w_1,\dots,w_d$ are among the weights $\lambda_1,\dots,\lambda_k$ of the filtration; moreover, if $\bar A_j$ is the element of the quotient $W_{w_j}$ corresponding to $A_j \in F_{w_j}$, then $\bar A_1,\dots,\bar A_k$ is an adapted basis of $\lie{g}_*$, with the same weights $w_1,\dots,w_k$ (cf.\ \cite{ter_elst_weighted_1998}, Lemma~2.2 and Proposition~3.1). The homogeneous Lie algebra $\lie{g}_*$, with the fixed adapted basis $\bar A_1,\dots,\bar A_d$, is said to be the \emph{contraction} of the weighted Lie algebra $\lie{g}$.

Notice that, if $\lie{g}$ is a weighted homogeneous Lie algebra, then $\lie{g}_*$ is canonically isomorphic to $\lie{g}$.

A \emph{weighted Lie group} is a connected Lie group $G$ whose Lie algebra $\lie{g}$ is weighted. The \emph{contraction} $G_*$ of a weighted Lie group $G$ is the homogeneous Lie group whose Lie algebra is $\lie{g}_*$.

\subsection{Control distance and volume growth}\label{subsection:controldistance}

Let $G$ be a weighted Lie group. Let $A_1,\dots,A_k$ be the fixed reduced basis of its Lie algebra $\lie{g}$, with weights $w_1,\dots,w_k$. For $s \in \{0,\infty,*\}$ and $\varepsilon > 0$, let $C_s(\varepsilon)$ be the set of absolutely continuous arcs $\gamma : [0,1] \to G$ such that
\[\gamma'(t) = \sum_{j=1}^k \phi_j(t) \, A_j|_{\gamma(t)} \qquad\text{for a.e.\ $t \in [0,1]$,}\]
where
\begin{equation}\label{eq:mincondition}
|\phi_j(t)| < \begin{cases}
\varepsilon^{w_j} & \text{if $s = 0$,}\\
\varepsilon       & \text{if $s = \infty$,}\\
\min\{\varepsilon,\varepsilon^{w_j}\} & \text{if $s = *$,}
\end{cases} \qquad\text{for $t \in [0,1]$, $j=1,\dots,k$;}
\end{equation}
for $x,y \in G$, we define then
\[d_s(x,y) = \inf \{\varepsilon > 0 \tc \exists \gamma \in C_s(\varepsilon) \text{ with } \gamma(0) = x, \, \gamma(1) = y\}.\]

It is not difficult to show that $d_0$, $d_\infty$ and $d_*$ are left-invariant distances on $G$, compatible with the topology of $G$. In fact, $d_\infty$ is the classical ``unweighted'' Carnot-Carath\'eodory distance associated with the H\"ormander system $A_1,\dots,A_k$ (cf.\ \cite{varopoulos_analysis_1992}, \S III.4), while $d_0$ is a ``weighted'' Carnot-Carath\'eodory distance (similar to the ones studied in \cite{nagel_balls_1985}). Moreover, for $x,y \in G$, we have
\[d_0(x,y) \leq 1 \quad\iff\quad d_\infty(x,y) \leq 1 \quad\iff\quad d_*(x,y) \leq 1,\]
and the same holds with strict inequalities. Finally,
\[d_*(x,y) = \begin{cases}
d_0(x,y) &\text{for $d_*(x,y) \leq 1$,}\\
d_\infty(x,y) &\text{for $d_*(x,y) \geq 1$.}
\end{cases}\]
We call $d_*$ the \emph{control distance}\footnote{Notice that the definition of the control distance by ter Elst and Robinson in \S6 of \cite{ter_elst_weighted_1998} (see also \cite{ter_elst_weighted_1994}) is different from the one given here, and coincides with our distance $d_0$. Their definition has the advantage that, in the case of a homogeneous group with an adapted basis, the modulus $|\cdot|_0$ induced by $d_0$ is a homogeneous norm; on the other hand, this shows (by taking, e.g., any non-stratified homogeneous Lie group, cf.\ Propositions~\ref{prp:nilpotentgrowth} and \ref{prp:quasiequivalence}) that in general $d_0$ is not a ``connected distance'' as in \cite{varopoulos_analysis_1992}, \S III.4. Nevertheless, in the whole papers \cite{auscher_positive_1994,ter_elst_weighted_1994,ter_elst_weighted_1998} it is understood that $d_0$ is ``connected''.

By a careful examination of their proofs, one sees that the specific properties of $d_0$ are used only for small distances, whereas in the large only ``connectedness'' is used. Therefore, our modified definition of the control distance $d$ fixes the problem (as it has been confirmed to us by ter Elst in a private communication). As a side-effect, since $d_* \geq d_0$ everywhere, the heat kernel estimates obtained with this modification (see Theorem~\ref{thm:robinsonterelst}(e)) are stronger than the ones claimed by ter Elst and Robinson (which are therefore true \emph{a posteriori}).} on the weighted Lie group $G$.

The control distance $d_*$ induces a \emph{control modulus} $|\cdot|_*$ on $G$, given by
\[|g|_* = d_*(e,g).\]
Moreover, if $B_r$ denotes the $d_*$-ball with radius $r$ centered at the identity of $G$, then
\[\mu(B_r) \sim r^{Q_*} \qquad\text{for $r \leq 1$,}\]
where $Q_*$ is the homogeneous dimension of the contraction $\lie{g}_*$ (see \cite{ter_elst_weighted_1998}, Proposition~6.1). On the other hand, the growth rate of $\mu(B_r)$ for $r$ large coincides with the (intrinsic) volume growth of the group $G$ (cf.\ \cite{varopoulos_analysis_1992}, \S III.4); in particular, if $G$ has polynomial growth of degree $Q_G$, then
\[\mu(B_r) \sim r^{Q_G} \qquad\text{for $r \geq 1$.}\]

\subsection{Weighted subcoercive forms and operators}

Let $G$ be a weighted Lie group, with reduced basis $A_1,\dots,A_d$ of its Lie algebra $\lie{g}$, and weights $w_1,\dots,w_d$. In this context, a \emph{form} is an element of the free (non-commutative associative unital) algebra over $\C$ on $d$ indeterminates $X_1,\dots,X_d$; in other words, a form is a function $C : J(d) \to \C$ null off a finite subset of $J(d)$, which can be thought of as the non-commutative polynomial
\[\sum_{\alpha \in J(d)} C(\alpha) X^\alpha.\]
The \emph{degree} of the form $C$ is the number
\[\max \{\|\alpha\| \tc \alpha \in J(d),\,C(\alpha) \neq 0\}.\]
If $C$ is a form of degree $m$, then its \emph{principal part} is the form $P : J(d) \to \C$ which is given by the sum of the terms of $C$ of degree $m$:
\[P(\alpha) = \begin{cases}
C(\alpha) &\text{if $\|\alpha\| = m$,}\\
0 &\text{otherwise.}
\end{cases}\]
A form is said to be \emph{homogeneous} if it equals its principal part. The \emph{adjoint} of a form $C$ is the form $C^+$ defined by
\[C^+(\alpha) = (-1)^{|\alpha|} \overline{C(\alpha_*)},\]
where $\alpha_* = (\alpha_k,\dots,\alpha_1)$ if $\alpha = (\alpha_1,\dots,\alpha_k)$.

To each form $C$, we associate a differential operator $d\RA_G(C) \in \Diff(G)$ by setting
\[d\RA_G(C) = \sum_{\alpha \in J(d)} C(\alpha) A^\alpha.\]
More generally, if $\pi$ is a representation of $G$, we define
\[d\pi(C) = d\pi(d\RA_G(C)) = \sum_{\alpha \in J(d)} C(\alpha) d\pi(A)^\alpha.\]

Notice that we have
\[d\RA_G(C^+) = d\RA_G(C)^+,\]
where, for $D \in \Diff(G)$, $D^+$ denotes its \emph{formal adjoint} (with respect to the right Haar measure $\mu$), i.e., the element of $\Diff(G)$ determined by
\[\langle D f, g \rangle = \langle f, D^+ g \rangle \qquad\text{for all $f,g \in \D(G)$,}\]
where $\langle f, g \rangle = \int_G f \, \overline{g} \,d\mu$.

If $\pi$ is a representation of $G$ on a Banach space $\VV$, we define seminorms and norms on (subspaces of) $\VV$ by
\[N_{\pi,s}(x) = \max_{\substack{\alpha \in J(d)\\ \|\alpha\| = s}} \|dU(X^\alpha) x\|_\VV, \qquad \|x\|_{\pi,s} = \max_{\substack{\alpha \in J(d)\\ \|\alpha\| \leq s}} \|dU(X^\alpha) x\|_\VV,\]
for $s \in \R$, $s \geq 0$; these quantities are certainly defined on the space $\VV^\infty$ of smooth vectors of the representation. If $\pi$ is the right regular representation of $G$ on $L^p(G)$, we use the alternative notation $N_{p;s}$, $\|\cdot\|_{p;s}$ for the (semi)norms, and $L^{p;\infty}(G)$ for the space of smooth vectors.

A form $C$ of degree $m$ is said to be \emph{weighted subcoercive} on $G$ if $m/w_i \in 2\N$ for $i=1,\dots,d$ and if moreover the corresponding operator satisfies a local \emph{G\aa rding inequality}: there exist $\mu > 0$, $\nu \in \R$ and an open neighborhood $V$ of the identity $e \in G$ such that
\[\Re \langle \phi, d\RA_G(C) \phi\rangle \geq \mu (N_{2;m/2}(\phi))^2 - \nu \|\phi\|_2^2\]
for all $\phi \in \D(G)$ with $\supp \phi \subseteq V$. In this case, the operator $d\RA_G(C)$ is called a \emph{weighted subcoercive operator}.

Let $G_*$ be the contraction of $G$, with Lie algebra $\lie{g}_*$. Since $A_1,\dots,A_d$ induces a reduced basis $\bar A_1,\dots,\bar A_d$ on $\lie{g}_*$ (with the same weights), we can associate to a form $C$ both a differential operator $d\RA_G(C)$ on $G$ and a differential operator $d\RA_{G_*}(C)$ on $G_*$: in some sense, $d\RA_{G_*}(C)$ is the ``local counterpart'' of the operator $d\RA_G(C)$. The next theorem clarifies the relationship between the two operators.

\begin{thm}[ter Elst \& Robinson]\label{thm:robinsonterelst}
Let $C$ be a form of degree $m$, whose principal part is $P$, such that $m/w_i \in 2\N$ for $i=1,\dots,d$. The following are equivalent:
\begin{enumerate}
\item[(i)] $C$ is a weighted subcoercive form on $G$;
\item[(ii)] $d\RA_{G_*}(P + P^+)$ is a positive Rockland operator on $G_*$;
\item[(iii)] there are constants $\mu > 0$, $\nu \in \R$ such that, for every unitary representation $\pi$ of $G$ on a Hilbert space $\HH$,
\[\Re \langle x, d\pi(C) x \rangle \geq \mu \|x\|_{\pi,m/2}^2 - \nu \|x\|_\HH^2\]
for all $x \in \HH^\infty$;
\item[(iv)] there is a constant $\mu > 0$ such that, for every unitary representation $\pi$ of $G_*$ on a Hilbert space $\HH$,
\[\Re \langle x, d\pi(P) x \rangle \geq \mu (N_{\pi,m/2}(x))^2\]
for all $x \in \HH^\infty$.
\end{enumerate}
Moreover, if these conditions are satisfied, for every representation $\pi$ of $G$ on a Banach space $\VV$, we have:
\begin{itemize}
\item[(a)] the closure of $d\pi(C)$ generates a continuous semigroup $\{S_t\}_{t \geq 0}$ on $\VV$;
\item[(b)] for $t > 0$, $S_t(\VV) \subseteq \VV^\infty$, and moreover $\VV^\infty = \bigcap_{n = 1}^\infty D(\overline{d\pi(C)}^n)$;
\item[(c)] if $\pi$ is unitary, then $\overline{d\pi(C)} = d\pi(C^+)^*$;
\item[(d)] there exists a representation-independent kernel $k_t \in L^{1;\infty} \cap C_0^\infty(G)$ (for $t > 0$) such that
\[d\pi(X^\alpha) S_t x = \pi(A^\alpha k_t) x = \int_G (A^\alpha k_t)(g) \pi(g^{-1})x \,dg\]
for all $\alpha \in J(d)$, $t > 0$, $x \in \VV$;
\item[(e)] the kernel satisfies the following ``Gaussian'' estimates: for all $\alpha \in J(d)$ there exist $b,c,\omega > 0$ such that
\[|A^\alpha k_t(g)| \leq c t^{-\frac{Q_* + \|\alpha\|}{m}} e^{\omega t} e^{-b \left(\frac{|g|_*^m}{t}\right)^{1/(m-1)}}\]
for all $t >0$ and $g \in G$, where $Q_*$ is the homogeneous dimension of $\lie{g}_*$ and $|\cdot|_*$ is the control modulus;
\item[(f)] for all $\rho \geq 0$, the map $t \mapsto k_t$ is continuous $\left]0,+\infty\right[ \to L^{1;\infty}(G, e^{\rho |x|_*} \,dx)$ and, for all $\alpha \in J(d)$, there exist $c,\omega > 0$ such that
\[\|A^\alpha k_t\|_{L^1(G, e^{\rho |x|_*} \,dx)} \leq c t^{-\frac{\|\alpha\|}{m}} e^{\omega t};\]
\item[(g)] the function
\[k(t,x) = \begin{cases}
0 &\text{for $t \leq 0$,}\\
k_t(x) &\text{for $t > 0$,}
\end{cases}\]
on $\R \times G$ satisfies $\left( \frac{\partial}{\partial t} + d\RA_G(C) \right) k = \delta$ in the sense of distributions, where $\delta$ is the Dirac delta at the identity of $\R \times G$.
\end{itemize}
\end{thm}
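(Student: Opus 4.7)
My plan is to establish the four equivalences (i)--(iv) cyclically and then derive properties (a)--(g) from (iii) via semigroup theory and heat-kernel techniques; the equivalences and most of (a)--(d),(g) recast results of ter Elst--Robinson \cite{ter_elst_weighted_1998}, while the novelty is the Gaussian estimate (e) stated in terms of the modified control modulus $|\cdot|_*$.

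For the equivalences: (i) $\Rightarrow$ (ii) is obtained by rescaling the local Gårding inequality via the contracting dilations $\delta_t$ as $t \to 0$, killing lower-order terms and producing a global Gårding inequality for $d\RA_{G_*}(P+P^+)$ on $\D(G_*)$; homogeneity and the Helffer--Nourrigat / Miller characterization of hypoellipticity then force $d\RA_{G_*}(P+P^+)$ to be positive Rockland. (ii) $\Rightarrow$ (iv): for a positive Rockland operator, Nelson--Stinespring yields essential self-adjointness and nonnegativity of $d\pi(P+P^+)$ for every unitary $\pi$; a spectral-calculus / interpolation argument exploiting homogeneity bounds $N_{\pi,m/2}(x)^2$ by $\Re\langle x, d\pi(P)x\rangle$ with constants uniform in $\pi$. (iv) $\Rightarrow$ (iii): decomposing $C = P + R$ with $\deg R < m$, one transfers the principal-part inequality from $G_*$ to $G$ (the reduced basis of $\lie{g}$ projects to an adapted basis of $\lie{g}_*$ with the same weights), absorbing $R$ into $\nu\|x\|^2$ via the standard interpolation $\|x\|_{\pi,s}^2 \leq \epsilon N_{\pi,m/2}(x)^2 + C_\epsilon \|x\|^2$. (iii) $\Rightarrow$ (i) is immediate by specialization to $\pi = \RA$ and $\phi \in \D(V)$.

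For (a)--(g), the uniform Gårding inequality (iii) is the engine. Property (c) follows from a Friedrichs-type closure using (iii); (a) from Lumer--Phillips applied to $-d\pi(C) + \nu I$; (b) from iterating higher-order Gårding estimates along the semigroup orbit, which simultaneously identifies $\VV^\infty$ with $\bigcap_n D(\overline{d\pi(C)}^n)$. Property (d) is obtained by first taking $\pi = \RA$, so that $S_t$ is convolution with some $k_t \in L^{1;\infty}(G)$, and then transferring to arbitrary $\pi$ via $\pi(A^\alpha k_t) = d\pi(X^\alpha)\pi(k_t)$. The Gaussian estimate (e) comes from a Davies twist: conjugate $S_t$ by $e^{\rho|g|_*}$, use that $|\cdot|_*$ is Lipschitz with respect to the weighted vector-field system to control the perturbed generator, and optimize in $\rho$ to obtain the sub-Gaussian exponent $(|g|_*^m/t)^{1/(m-1)}$; the polynomial prefactor $t^{-(Q_*+\|\alpha\|)/m}$ reflects the small-ball volume estimate $\mu(B_r) \sim r^{Q_*}$ of \S\ref{subsection:controldistance}. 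Finally, (f) is the $L^1$-integration of (e) against the exponential weight, and (g) translates $\partial_t S_t = -d\RA_G(C) S_t$ together with $S_{0^+} = I$ into a distributional identity.

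The main obstacle is the refined bound (e). Locally, the sub-Gaussian decay is driven by the weighted coercivity encoded in $d_0$, so the original ter Elst--Robinson proof applies verbatim on the small scale. Globally, however, Davies' method requires the distance to be \emph{connected} in Varopoulos' sense, which $d_0$ fails on non-stratified homogeneous groups but $d_\infty$ enjoys; since $|\cdot|_*$ coincides with $|\cdot|_0$ for $|g|_* \leq 1$ and with $|\cdot|_\infty$ for $|g|_* \geq 1$, one must glue the two regimes carefully, verifying that the Davies exponential weight is globally Lipschitz and that the local and asymptotic constants are compatible, to obtain the sharpened estimate with $|\cdot|_*$ announced in the footnote. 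Once this is accomplished, (f), (g), and the representation-independent features of (d) follow by the same routes as in \cite{ter_elst_weighted_1998}.
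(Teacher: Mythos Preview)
Your sketch is considerably more elaborate than the paper's own proof, which does not reprove the equivalences or (a)--(e),(g) at all: it simply cites \cite{ter_elst_weighted_1998} for all of these and supplies an argument only for (f). Your diagnosis of (e) as the delicate point is nonetheless in line with the paper's footnote on the control distance: ter Elst--Robinson stated their Gaussian estimates with $|\cdot|_0$, whose induced metric is not ``connected'' in the Varopoulos sense in the large, and the paper's switch to $|\cdot|_*$ (agreeing with $|\cdot|_0$ locally and with $|\cdot|_\infty$ at infinity) repairs that gap and incidentally strengthens the conclusion. But the paper treats this as a minor correction absorbed into the citation, not as the centerpiece of the proof.

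Where there is a genuine (small) gap in your proposal is precisely the part the paper \emph{does} prove, namely (f). Your one line --- ``(f) is the $L^1$-integration of (e) against the exponential weight'' --- yields the weighted $L^1$ norm bounds $\|A^\alpha k_t\|_{L^1(e^{\rho|x|_*}dx)} \leq c\, t^{-\|\alpha\|/m} e^{\omega t}$ (using that a connected Lie group has at most exponential volume growth), but it does not address the continuity assertion that $t \mapsto k_t$ is continuous $\left]0,+\infty\right[ \to L^{1;\infty}(G, e^{\rho|x|_*}\,dx)$. Ter Elst--Robinson only state continuity into the unweighted $L^1$. The paper upgrades this via the semigroup identity
\[
A^\alpha k_{t+s} = k_t * (A^\alpha k_s):
\]
once $A^\alpha k_s$ is known to lie in the weighted $L^1$ space for each fixed $s>0$, continuity in $t$ of the left-hand side follows from the $L^1$-continuity of $t \mapsto k_t$ and the properties of convolution. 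You should add this step.
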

\begin{proof}
This theorem is a summary of results contained in \cite{ter_elst_weighted_1998}, except for (f), since in Theorem~7.2 of \cite{ter_elst_weighted_1998} it is only stated that the map $t \mapsto k_t$ is continuous $\left]0,+\infty\right[ \to L^1(G, e^{\rho |x|_*} \,dx)$. However, the weighted $L^1$ estimates for $A^\alpha k_t$ in (f) are obtained by integration of the pointwise estimates (e), since the volume growth of a connected Lie group is at most exponential (cf.\ \cite{guivarch_croissance_1973}). Moreover, by the semigroup property, we have
\begin{equation}\label{eq:semigroupproperty}
A^{\alpha} (k_{t+s}) = k_t * (A^\alpha k_s)
\end{equation}
and, since $A^\alpha k_s \in L^1(G, e^{\rho |x|_*} \,dx)$, the required continuity follows from the properties of convolution.
\end{proof}

\begin{cor}\label{cor:hypoelliptic}
With the notation of the previous theorem, if $C$ is a weighted subcoercive form on $G$, then the function $k(t,x) = k_t(x)$ is smooth off the identity of $\R \times G$, and the operator $d\RA_G(C)$ is hypoelliptic.
\end{cor}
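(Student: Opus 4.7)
The plan is to extract both assertions from the properties of the heat kernel listed in Theorem~\ref{thm:robinsonterelst}. Smoothness of $k$ away from $(0,e) \in \R \times G$ has to be checked at three kinds of points: $t > 0$ (where the spatial smoothness of $k_t$ is already given by part~(d)); $t < 0$ (where $k \equiv 0$ trivially); and the boundary $\{0\} \times (G \setminus \{e\})$, which is the delicate case. The hypoellipticity statement will then follow by exhibiting $k$ as a fundamental solution on $\R \times G$ of a suitable left-invariant operator that is smooth off the origin, and transferring the resulting hypoellipticity back to $G$.

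For the smoothness, part~(d) (applied with $\pi = \RA$ and $\VV = L^1(G)$) gives $A^\alpha k_t \in C_0^\infty(G) \cap L^{1;\infty}(G)$ for each $t > 0$, while part~(g) yields $\partial_t k_t = -d\RA_G(C) k_t$ wherever $t > 0$; iterating, every mixed derivative $\partial_t^n A^\alpha k_t$ is a linear combination of pure spatial derivatives $A^\beta k_t$. Part~(f) ensures that each such spatial derivative depends continuously on $t \in \left]0,+\infty\right[$ with values in $L^{1;\infty}$, so $k \in \E(\left]0,+\infty\right[ \times G)$. To cross the boundary, fix $x_0 \in G$ with $x_0 \neq e$: the Gaussian estimate~(e) applied to every $A^\beta k_t$ entering into $\partial_t^n A^\alpha k_t$ shows that $\exp(-b(|x|_*^m/t)^{1/(m-1)})$ dominates the polynomial blow-up $t^{-(Q_*+\|\beta\|)/m}$ as $t \to 0^+$, uniformly for $x$ in a neighborhood of $x_0$ bounded away from $e$. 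Hence every derivative of $k$ extends continuously by zero across $\{0\} \times (G \setminus \{e\})$; together with $k \equiv 0$ for $t < 0$ this gives $k \in \E((\R \times G) \setminus \{(0,e)\})$.

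For hypoellipticity, part~(g) identifies $k$ as a fundamental solution at the identity for the left-invariant operator $M = \partial_t + d\RA_G(C)$ on the Lie group $\R \times G$, and by the previous step this fundamental solution is smooth off the origin. A classical parametrix argument---localising a distributional solution $Mw = f$ via a cutoff $\phi \equiv 1$ near the target point, writing $M(\phi w) = \phi f + [M,\phi]w$, and recovering $\phi w$ as a convolution of $k$ with this compactly supported right-hand side---then shows that $M$ is hypoelliptic on $\R \times G$, since the smoothness of $k$ off the identity together with the support of $[M,\phi]w$ being away from the target point ensures that both summands contribute smooth functions there. Hypoellipticity of $d\RA_G(C)$ on $G$ itself follows by a pullback trick: if $u \in \D'(G)$ satisfies $d\RA_G(C) u = f \in \E(\Omega)$, then $v = 1 \otimes u \in \D'(\R \times G)$ satisfies $Mv = 1 \otimes f \in \E(\R \times \Omega)$, so $v \in \E(\R \times \Omega)$, which, by slicing or integrating against a test function in $t$, forces $u \in \E(\Omega)$. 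The main obstacle is the careful bookkeeping at $t \to 0^+$: one must verify that the polynomial $t$-factors accumulated through iterating $\partial_t = -d\RA_G(C)$ are indeed beaten by the Gaussian decay, which does hold because $|x|_*$ is bounded below in a neighborhood of any $x_0 \neq e$; the parametrix manipulation is then routine thanks to the compact support built in by the cutoff.
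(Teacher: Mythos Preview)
Your smoothness argument is essentially the paper's: express $t$-derivatives of $k_t$ through the heat equation as spatial derivatives, use the continuity in $t$ from part~(f) and a Sobolev embedding to get joint smoothness on $\left]0,+\infty\right[\times G$, then invoke the Gaussian bound~(e) to extend smoothly by zero across $\{0\}\times(G\setminus\{e\})$. A couple of steps are stated rather than proved (upgrading the distributional identity $\partial_t k_t=-d\RA_G(C)k_t$ to a classical one, and passing from $L^{1;\infty}$-continuity to pointwise smoothness), but these are routine.

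The hypoellipticity argument, however, has a genuine gap. From $Mk=\delta$ with $M=\partial_t+d\RA_G(C)$ left-invariant on $\R\times G$, you only get $M(g\ast k)=g$ for $g=M(\phi w)$; you \emph{cannot} conclude $\phi w=g\ast k$. On $\R^n$ the missing equality holds because convolution is commutative (so $v=v\ast\delta=v\ast(Mk)=(Mv)\ast k$), but on a non-abelian group the step $v\ast(Mk)=(Mv)\ast k$ fails, and the difference $\phi w-g\ast k$ is an uncontrolled element of $\ker M$. The standard parametrix criterion for hypoellipticity (Treves, Theorem~52.1) therefore requires a fundamental solution for the \emph{transpose} (equivalently, the formal adjoint) as well. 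The paper supplies exactly this: since $C^+$ is also a weighted subcoercive form, its heat kernel $k_t^*$ yields
\[
\tilde k(t,x)=\begin{cases}0,&t\ge 0,\\ k_{-t}^*(x),&t<0,\end{cases}
\]
smooth off the origin and satisfying $\bigl(-\partial_t+d\RA_G(C^+)\bigr)\tilde k=\delta$. With both $k$ and $\tilde k$ in hand the two-sided parametrix argument goes through, giving hypoellipticity of $M$ on $\R\times G$; your pullback $v=1\otimes u$ then correctly transfers this to $d\RA_G(C)$ on $G$.
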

\begin{proof}
From Theorem~\ref{thm:robinsonterelst}(g) we deduce that, for every $r \in \N \setminus \{0\}$, the distribution
\begin{equation}\label{eq:derivativekernel}
(\partial_t^r - (- d\RA_G(C))^r) k
\end{equation}
is supported in the origin of $\R \times G$. In particular, if $\phi \in \D(\left]0,+\infty\right[)$ and $\psi \in \D(G)$, by applying \eqref{eq:derivativekernel} to $\phi \otimes \psi$ we get
\[(-1)^r \int_0^\infty \langle k_t, \psi \rangle \, \overline{\phi^{(r)}(t)} \,dt = \int_0^\infty \langle (-d\RA_G(C))^r k_t, \psi \rangle \, \overline{\phi(t)} \,dt.\]
Since both $t \mapsto k_t$ and $t \mapsto (-d\RA_G(C))^r k_t$ are continuous $\left]0,+\infty\right[ \to L^1(G)$ by Theorem~\ref{thm:robinsonterelst}(f), this identity holds also for all $\psi \in C_0(G)$. In other words, for all $\psi \in C_0(G)$, the $r$-th distributional derivative of the function $t \mapsto \langle k_t, \psi \rangle$ on $\left]0,+\infty\right[$ is the map
\[t \mapsto \langle (-d\RA_G(C))^r k_t, \psi \rangle;\]
since all these derivatives are continuous, the function $t \mapsto \langle k_t, \psi \rangle$ is smooth on $\left]0,+\infty\right[$, so that also the map $t \mapsto k_t$ is smooth $\left]0,+\infty\right[ \to L^1(G)$. But then from \eqref{eq:semigroupproperty} it follows easily that $t \mapsto k_t$ is smooth $\left]0,+\infty\right[ \to L^{1;\infty}(G)$. By Sobolev's embedding, we then get that $t \mapsto k_t$ is smooth $\left]0,+\infty\right[ \to \E(G)$; this gives that $k$ is smooth on $\left]0,+\infty\right[ \times G$, and the Gaussian estimates of Theorem~\ref{thm:robinsonterelst}(e) show that $k$ can be extended smoothly by zero to the whole $\R \times G \setminus \{(0,e)\}$.

Notice that $k_t^*$ is the kernel of $d\RA_G(C^+)$, which is also a weighted subcoercive operator. If we put
\[\tilde k(t,x) = \begin{cases}
0 &\text{if $t \geq 0$},\\
k_{-t}^* &\text{if $t \leq 0$,}
\end{cases}\]
then $\tilde k$ is smooth on $\R \times G \setminus \{(0,e)\}$ and satisfies $\left(-\frac{\partial}{\partial t} + d\RA_G(C^+)\right) \tilde k = \delta$
in the sense of distributions. By arguing analogously as in the proof of Theorem~52.1 of \cite{treves_topological_1967}, we obtain that $\partial_t + d\RA_G(C)$ is hypoelliptic on $\R \times G$, and the hypoellipticity of $d\RA_G(C)$ on $G$ follows immediately.
\end{proof}

\begin{cor}\label{cor:kernelapproximateidentity}
With the notation of Theorem~\ref{thm:robinsonterelst}, if $C$ is a weighted subcoercive form on $G$, then $(k_t)_{t > 0}$ is an \emph{approximate identity} on $G$ for $t \to 0^+$ (cf.\ \cite{grafakos_classical_2008}, \S1.2.4), i.e.,
\begin{itemize}
\item $k_t \in L^1(G)$ and $\limsup_{t \to 0^+} \|k_t \|_1 < \infty$;
\item $\lim_{t \to 0^+} \int_{G \setminus U} |k_t(x)| \,dx = 0$ for all neighborhoods $U$ of the identity of $G$;
\item $\lim_{t \to 0^+} \int_G k_t(x) \,dx = 1$.
\end{itemize}
More generally, for every $D \in \Diff(G)$, $\beta \geq 0$ and every neighborhood $U$ of the identity of $G$,
\begin{equation}\label{eq:kerneloutofaneighborhood}
\lim_{t \to 0^+} t^{-\beta} \int_{G \setminus U} |Dk_t(x)| \,dx = 0.
\end{equation}
\end{cor}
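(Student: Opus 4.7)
My overall plan is to derive the three bulleted conditions from the results of Theorem~\ref{thm:robinsonterelst}, together with a short auxiliary argument using the trivial representation. The first bullet follows immediately from part (f) applied with $\alpha$ the empty sequence and $\rho = 0$, which yields $\|k_t\|_1 \leq c e^{\omega t}$. The second bullet is the special case $D = \mathrm{Id}$, $\beta = 0$ of the main estimate \eqref{eq:kerneloutofaneighborhood}. For the third bullet, I would apply the representation-independent identity in (d) to the trivial unitary representation $\pi_0$ of $G$ on $\C$, with $\alpha$ the empty sequence and $x = 1$: this gives $\int_G k_t(g)\,dg = S_t\cdot 1$, which tends to $1$ as $t \to 0^+$ by strong continuity of the semigroup at $0$.

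The bulk of the work then lies in \eqref{eq:kerneloutofaneighborhood}. The plan is to bound $Dk_t$ pointwise via the Gaussian estimate (e). Since the $A_j$ generate $\lie{g}$, I can write $D = \sum_{\alpha \in F} c_\alpha A^\alpha$ for some finite $F \subseteq J(d)$ and set $M = \max_{\alpha \in F}\|\alpha\|$; then (e) applied term by term gives $|Dk_t(x)| \leq C\, t^{-(Q_*+M)/m}\, e^{\omega t}\, e^{-b(|x|_*^m/t)^{1/(m-1)}}$. Fixing $\delta > 0$ so that $\{|x|_* < \delta\} \subseteq U$ and using the elementary splitting
\[
e^{-b(|x|_*^m/t)^{1/(m-1)}} \leq e^{-(b/2)(\delta^m/t)^{1/(m-1)}}\, e^{-(b/2)(|x|_*^m/t)^{1/(m-1)}} \qquad (|x|_*\geq \delta),
\]
one factors out a ``size'' term that depends only on $\delta$ and $t$, and a ``tail'' to be integrated. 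For $t \leq 1$ one has $(|x|_*^m/t)^{1/(m-1)} \geq |x|_*^{m/(m-1)}$, so the tail integral over $G\setminus U$ is dominated by the $t$-independent, finite quantity $\int_G e^{-(b/2)|x|_*^{m/(m-1)}}\,dx$; convergence uses $m/(m-1) > 1$ together with the at-most-exponential volume growth of $G$ (cf.~\cite{guivarch_croissance_1973}).

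The main obstacle is combining these two ingredients to defeat the additional $t^{-\beta}$. The key point is that the ``size'' factor $e^{-(b/2)(\delta^m/t)^{1/(m-1)}}$ decays faster than any polynomial in $t^{-1}$ as $t \to 0^+$, so it absorbs both the $t^{-(Q_*+M)/m}$ coming from the pointwise Gaussian bound and the $t^{-\beta}$ from the statement. I expect this to be the only slightly delicate point; by contrast, trying to derive \eqref{eq:kerneloutofaneighborhood} from the weighted $L^1$ bound (f) alone would not suffice, since the negative powers of $t$ there cannot be absorbed by the exponential weight $e^{\rho|x|_*}$ for any fixed $\rho$.
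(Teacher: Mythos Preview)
Your proposal is correct and follows essentially the same approach as the paper: both derive \eqref{eq:kerneloutofaneighborhood} from the pointwise Gaussian bound (e) by splitting the exponential into a $t$-independent integrable tail and a $t$-dependent factor decaying super-polynomially, and both handle the third bullet via the trivial representation. The only cosmetic differences are that the paper splits the exponent additively at the radius $R$ (writing $r^{m/(m-1)}t^{-1/(m-1)} \geq (r^{m/(m-1)} - R^{m/(m-1)}) + R^{m/(m-1)}t^{-1/(m-1)}$ for $t\leq 1$, $r\geq R$) rather than halving the constant $b$, and computes $S_t\cdot 1 = e^{-tc}$ explicitly instead of invoking strong continuity.
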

\begin{proof}
If $R > 0$ is such that
\[\{x \in G \tc |x|_* < R \} \subseteq U,\]
then, by Theorem~\ref{thm:robinsonterelst}(e), for $t \leq 1$ we have
\[t^{-\beta} \int_{G \setminus U} |Dk_t(x)| \,dx \leq c t^{-\gamma} \int_R^{+\infty} e^{-b(r^m/t)^{1/(m-1)}} e^{\sigma r} \,dr\]
for some $c,b,\sigma,\gamma > 0$. On the other hand, for $t \leq 1$ and $r \geq R$,
\[t^{-\gamma} e^{-b(r^m/t)^{1/(m-1)}} e^{\sigma r} \leq e^{-b(r^{\frac{m}{m-1}} - R^{\frac{m}{m-1}}) + \sigma r} e^{- \gamma \log t -b R^{\frac{m}{m-1}} t^{-\frac{1}{m-1}} },\]
where the first factor on the right-hand side is integrable on $\left]R,+\infty\right[$ and does not depend on $t$, whereas the second factor is infinitesimal for $t \to 0^+$ and does not depend on $r$; the limit \eqref{eq:kerneloutofaneighborhood} then follows by dominated convergence.

In particular, we have
\[\lim_{t \to 0^+} \int_{G \setminus U} |k_t(x)| \,dx = 0,\]
and moreover, by Theorem~\ref{thm:robinsonterelst}(f), the norms $\|k_t\|_1$ are uniformly bounded for $t$ small. Finally, if $\pi$ is the trivial representation of $G$ on $\C$ and if $c = d\pi(C) 1$, then by Theorem~\ref{thm:robinsonterelst}(d) we have
\[\int_G h_t(x) \,dx = \pi(h_t) 1 = e^{-tc},\]
which tends to $1$ as $t \to 0^+$.
\end{proof}

In the following, we will consider connected Lie groups $G$ with no previously fixed weighted structure; then, an operator $L \in \Diff(G)$ will be said \emph{weighted subcoercive} on $G$ if $L$ is weighted subcoercive with respect to some weighted structure on $\lie{g}$. In this sense, we can say that every positive Rockland operator on a homogeneous Lie group is weighted subcoercive (see \cite{ter_elst_spectral_1997}, Lemmata 2.2 and 2.4, and Theorem 2.5; see also \cite{ter_elst_weighted_1998}, Example 4.4). Moreover, it is easy to check that, for every choice of a system of linearly independent generators $A_1,\dots,A_d$ of a Lie algebra $\lie{g}$, the assignment of weights all equal to $1$ always gives a reduced basis, and that the corresponding contraction $\lie{g}_*$ is stratified; in particular, the sublaplacian $L = -(A_1^2 + \dots + A_d^2)$ is weighted subcoercive. Finally, if $A_1,\dots,A_d$ linearly generate $\lie{g}$, then the contraction $\lie{g}_*$ is Euclidean (abelian and isotropic), and it is not difficult to see that positive left-invariant elliptic operators on $G$ are weighted subcoercive with respect to this structure.

\section{Algebras of differential operators}\label{section:algebras}

Here the existence and uniqueness of a joint spectral resolution for a system $L_1,\dots,L_n$ of formally self-adjoint left-invariant differential operators on a connected Lie group $G$ is proved, under the hypothesis that the algebra generated by $L_1,\dots,L_n$ contains a weighted subcoercive operator. An analogue of the (inverse) spherical Fourier transform of Gelfand pairs is also defined, and its main properties are derived.

In this and the following sections, results from the theory of spectral integration (as presented, e.g., in \cite{berberian_notes_1966,rudin_functional_1973,fell_representations_1988}) will be used without further reference.

\subsection{Joint spectral resolution}\label{subsection:resolution}
In the following, $G$ will be a connected Lie group.

\begin{lem}\label{lem:quasirockland}
Let $D,L \in \Diff(G)$ and suppose that $L$ is weighted subcoercive and formally self-adjoint. Then, for some $\bar r \in \N$, we have that, for all $r \geq \bar r$, $L^r + D$ is weighted subcoercive.
\end{lem}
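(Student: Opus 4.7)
The plan is to invoke characterization (ii) of Theorem~\ref{thm:robinsonterelst}: weighted subcoercivity of a form is equivalent to its principal part inducing a positive Rockland operator on the contraction $G_*$. I would thus aim to represent $L^r + D$ by a form whose principal part, for $r$ sufficiently large, comes entirely from $L^r$ and defines (up to a positive constant) the $r$-th power of a positive Rockland operator on $G_*$.

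Using the weighted structure on $\lie{g}$ for which $L$ is weighted subcoercive, write $L = d\RA_G(C)$ for a weighted subcoercive form $C$ of degree $m$ with principal part $P$; by (ii), $d\RA_{G_*}(P+P^+)$ is a positive Rockland operator. Since $L$ is formally self-adjoint, the symmetrized form $\hat C = (C + C^+)/2$ also represents $L$ and is self-adjoint as a form. Its degree-$m$ component $\hat P = (P+P^+)/2$ cannot vanish as a form, for otherwise $d\RA_{G_*}(P+P^+) = 0$, contradicting the positive Rockland property. Hence $\hat C$ still has degree $m$, with self-adjoint principal part $\hat P$, and $d\RA_{G_*}(\hat P) = \frac{1}{2}\,d\RA_{G_*}(P+P^+)$ is a positive Rockland operator on $G_*$. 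Fixing any form $C'_D$ with $d\RA_G(C'_D) = D$ of degree $m_D$, I would let $\bar r$ be any integer strictly greater than $m_D/m$: then for $r \geq \bar r$, the form $\hat C^r + C'_D$ represents $L^r + D$ (by the algebra-homomorphism property of $d\RA_G$), has degree $rm > m_D$, and principal part $\hat P^r$. The condition $rm/w_i \in 2\N$ is automatic from $m/w_i \in 2\N$, so weighted subcoercivity of $\hat C^r + C'_D$ reduces (via $\hat P = \hat P^+$) to showing that
\[ d\RA_{G_*}(\hat P^r + (\hat P^r)^+) = 2\, d\RA_{G_*}(\hat P)^r \]
is positive Rockland.

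The remaining step is the claim that positive Rocklandness is preserved under taking powers. Rocklandness is preserved because, if $d\pi(L_0)$ is injective on $\HH^\infty$ for every nontrivial irreducible unitary representation $\pi$, then so is $d\pi(L_0)^r$ by induction (using $d\pi(L_0)(\HH^\infty) \subseteq \HH^\infty$). Positivity is preserved via the unique self-adjoint extension $\overline{L_0}$: its spectrum lies in $[0,\infty)$ by positive Rocklandness, hence so does that of $\overline{L_0}^r$ by the functional calculus, yielding $\langle \phi, L_0^r \phi\rangle \geq 0$ for $\phi \in \D(G_*)$. The only mild technical obstacle is the bookkeeping around the symmetrization step, ensuring that the degree of $\hat C$ does not drop below $m$; once this is handled, the conclusion follows directly from Theorem~\ref{thm:robinsonterelst}(ii).
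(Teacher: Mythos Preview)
Your proposal is correct and follows essentially the same approach as the paper's proof: symmetrize the form representing $L$, observe that the principal part of $\hat C^r + C'_D$ is $\hat P^r$ once $rm$ exceeds the degree of $C'_D$, and invoke Theorem~\ref{thm:robinsonterelst}(ii). The paper compresses your symmetrization step into the single sentence ``since $L^+ = L$, we can suppose that $C^+ = C$'' and simply asserts that $P^r$ is Rockland ``by definition,'' so your treatment is in fact more careful on two points the paper leaves implicit---the non-vanishing of $\hat P$ (so the degree does not drop) and the preservation of positivity under powers.
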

\begin{proof}
Fix a weighted structure on $\lie{g}$ with respect to which the operator $L$ is weighted subcoercive. Then there exists a weighted subcoercive form $C$ such that $d\RA_G(C) = L$, and also a form $B$ such that $d\RA_G(B) = D$. In fact, since $L^+ = L$, we can suppose that $C^+ = C$.

Let then $P$ be the principal part of $C$, so that, by Theorem~\ref{thm:robinsonterelst}, $d\RA_{G_*}(P)$ is Rockland. By definition, this implies that, for every $r \in \N \setminus \{0\}$, $P^r$ is Rockland too. Notice now that, if $r$ is sufficiently large so that $P^r$ has degree greater than that of $B$, then the principal part of $C^r + B$ is $P^r$ and this implies, by Theorem~\ref{thm:robinsonterelst} again, that $L^r + D = d\RA_G(C^r + B)$ is weighted subcoercive.
\end{proof}

For every $D \in \Diff(G)$ and every unitary representation $\pi$ of $G$ on a Hilbert space $\HH$, the operator $d\pi(D)$ will be considered as defined on the space $\HH^\infty$ of smooth vectors of $\pi$, and notions such as closure or essential self-adjointness are understood to be referred to this domain\footnote{For some particular representations $\pi$ one may be interested in considering other domains for the operators $d\pi(D)$: for instance, for the regular representation, one could consider the space $\D(G)$ of compactly supported smooth functions. Theorem~1.1 of \cite{nelson_representation_1959} shows that for this and other ``reasonable'' choices of the domain, the closure of the $d\pi(D)$ remains unvaried, thus results about essential self-adjointness do not change.}.

\begin{prp}\label{prp:commutativealgebra}
Let $\Alg$ be a commutative unital subalgebra of $\Diff(G)$ closed by formal adjunction and containing a weighted subcoercive operator. Then, for every unitary representation $\pi$ of $G$, we have
\begin{equation}\label{eq:determinazioneaggiunto}
\overline{d\pi(D)} = d\pi(D^+)^* \qquad\text{for all $D \in \Alg$;}
\end{equation}
moreover, the operators $\overline{d\pi(D)}$ for $D \in \Alg$ are normal and commute strongly pairwise.
\end{prp}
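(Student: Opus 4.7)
My strategy is to reduce the statement to formally self-adjoint elements of $\Alg$ via the decomposition $D = \tfrac{D+D^+}{2} + i \tfrac{D-D^+}{2i}$, handle those by Nelson's analytic-vector theorem (combined with Goodman's result on dense analytic vectors for a unitary Lie-group representation), and finally identify the general closure with its adjoint using the smoothing action of the heat semigroup of a weighted subcoercive operator in $\Alg$. The main obstacle is this final identification, which amounts to producing a core inside $\HH^\infty$ for the normal operator built from the real and imaginary parts of $D$; it is precisely here that the weighted subcoercivity hypothesis plays its role.

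First, since $\Alg$ is closed under formal adjunction and $\Re\langle x, d\pi(L+L^+) x\rangle = 2\Re\langle x, d\pi(L) x\rangle$, characterization~(iii) in Theorem~\ref{thm:robinsonterelst} lets me replace the given weighted subcoercive operator by $L_0 := L+L^+ \in \Alg$, which is weighted subcoercive and formally self-adjoint. Theorem~\ref{thm:robinsonterelst}(c) then makes $\overline{d\pi(L_0)}$ self-adjoint, and Theorem~\ref{thm:robinsonterelst}(b) provides a semigroup $S_t := e^{-t\overline{d\pi(L_0)}}$ with $S_t \HH \subseteq \HH^\infty$ for $t > 0$. For any $D \in \Alg$, set $D_1 := (D+D^+)/2$ and $D_2 := (D-D^+)/(2i)$; these are formally self-adjoint elements of $\Alg$ with $D = D_1 + iD_2$, and they commute with $L_0$ since $\Alg$ is commutative.

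Goodman's theorem supplies a dense subspace $\D_\omega \subseteq \HH^\infty$ of vectors that are analytic for every element of $d\pi(\UEnA(\lie{g}))$. Since $d\pi(L_0), d\pi(D_1), d\pi(D_2)$ are pairwise commuting symmetric operators on $\HH^\infty$ with $\D_\omega$ as a common set of analytic vectors, Nelson's theorem and its extension to commuting families due to Nelson and Stinespring render their closures self-adjoint and pairwise strongly commuting. Define
\[
T := \overline{d\pi(D_1)} + i\overline{d\pi(D_2)} \quad\text{on}\quad D(\overline{d\pi(D_1)}) \cap D(\overline{d\pi(D_2)}).
\]
Strong commutation makes $T$ normal, and since $T|_{\HH^\infty} = d\pi(D)$ one has at once $\overline{d\pi(D)} \subseteq T$.

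For the reverse inclusion, the joint spectral calculus of the strongly commuting family $\{\overline{d\pi(L_0)}, \overline{d\pi(D_1)}, \overline{d\pi(D_2)}\}$ guarantees that each bounded $S_t$ preserves $D(T)$ and commutes with $T$ there. Hence for any $y \in D(T)$ the vectors $y_t := S_t y$ lie in $\HH^\infty$ for $t > 0$, converge to $y$, and satisfy $T y_t = S_t T y \to T y$ as $t \to 0^+$, so $\HH^\infty$ is a core for $T$ and $\overline{d\pi(D)} = T$. Applying the same reasoning to $D^+$ gives $\overline{d\pi(D^+)} = T^*$, so taking adjoints yields $d\pi(D^+)^* = T^{**} = T = \overline{d\pi(D)}$, which establishes \eqref{eq:determinazioneaggiunto} and normality. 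Finally, for any $D, D' \in \Alg$, writing $\overline{d\pi(D)}$ and $\overline{d\pi(D')}$ as normal Borel functions of the joint spectral measure of the strongly commuting real and imaginary parts of $D$, $D'$, and $L_0$ exhibits them as strongly commuting normal operators.
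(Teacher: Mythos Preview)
Your argument has a genuine gap at the step invoking ``Goodman's theorem.'' The analytic vectors for a unitary representation $\pi$ of $G$ are characterized by estimates of the form $\|d\pi(X^\alpha) v\| \leq C M^{|\alpha|} |\alpha|!$ for monomials $X^\alpha$ in a basis of $\lie{g}$; for an operator $D \in \UEnA(\lie{g})$ of order $m$, expanding $D^k$ into at most $N^k$ monomials of length $\leq mk$ gives only $\|d\pi(D)^k v\| \lesssim C'^k (mk)! \sim C''^k (k!)^m$. Thus such vectors are merely Gevrey of order $m$ for $d\pi(D)$, not analytic, once $m \geq 2$. There is no standard result producing a dense subspace of $\HH^\infty$ whose elements are analytic for \emph{every} element of $d\pi(\UEnA(\lie{g}))$, and Nelson's commuting-family criterion in fact requires analyticity for the \emph{sum of squares} $d\pi(L_0)^2 + d\pi(D_1)^2 + d\pi(D_2)^2$, which is even higher order. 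Since the essential self-adjointness of $d\pi(D_1), d\pi(D_2)$ and their strong commutation with $\overline{d\pi(L_0)}$ are exactly what your subsequent core argument rests on, the proof does not close.

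The paper avoids analytic vectors entirely. Identity \eqref{eq:determinazioneaggiunto} is reduced, via Lemma~2.3 of Nelson--Stinespring, to the essential self-adjointness of $d\pi(D^+D)$; this in turn is obtained by writing $D^+D = (L^{2r} + D^+D) - L^{2r}$, where Lemma~\ref{lem:quasirockland} makes both summands weighted subcoercive for large $r$, hence essentially self-adjoint by Theorem~\ref{thm:robinsonterelst}(c), and one concludes as in Corollary~2.4 of Nelson--Stinespring. Strong commutation is handled by the same device: for squares $A = D_1^2$, $B = D_2^2$ of formally self-adjoint elements, one checks that $A$, $B$, and $(1+A)(1+B)$ are all essentially self-adjoint (again via \eqref{eq:determinazioneaggiunto}, which has already been established) and that $d\pi(A+B+AB)$ is positive, which is precisely the input to Corollary~2.4 of Nelson--Stinespring. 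The mechanism you are missing is that weighted subcoercivity is stable under lower-order perturbation (Lemma~\ref{lem:quasirockland}), so essential self-adjointness of all the needed auxiliary operators comes directly from Theorem~\ref{thm:robinsonterelst}(c) rather than from any analytic-vector machinery; your heat-semigroup core idea is sound, but only \emph{after} these self-adjointness and strong-commutation facts are in hand.
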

\begin{proof}
Let $L \in \Alg$ be weighted subcoercive. Since $\Alg$ is closed by formal adjunction, by replacing $L$ with $(L + L^+)/2$, we can suppose that $L$ is formally self-adjoint (see Theorem~\ref{thm:robinsonterelst}).

Let $D \in \Alg$. By Lemma~2.3 of \cite{nelson_representation_1959}, in order to prove \eqref{eq:determinazioneaggiunto} it is sufficient to show that $d\pi(D^+ D)$ is essentially self-adjoint. However, by Lemma~\ref{lem:quasirockland}, it is possible to find $r \in \N$ sufficiently large so that both $A = L^{2r}$ and $C = L^{2r} + D^+ D$ are weighted subcoercive, which implies by Theorem~\ref{thm:robinsonterelst}(c) that $d\pi(A)$ and $d\pi(C)$ are essentially self-adjoint. The conclusion that $d\pi(D^+ D) = d\pi(C) - d\pi(A)$ is essentially self-adjoint then follows as in the proof of Corollary~2.4 of \cite{nelson_representation_1959}.

From \eqref{eq:determinazioneaggiunto} it follows that, for every formally self-adjoint $D \in \Alg$, $d\pi(D)$ is essentially self-adjoint. Let now
\[\mathcal{Q} = \{D^2 \tc D = D^+ \in \Alg\}.\]
For all $A,B \in \mathcal{Q}$, we have that $A, B, (1+A)(1+B)$ are formally self-adjoint elements of $\Alg$, so that $d\pi(A), d\pi(B), d\pi((1+A)(1+B))$ are essentially self-adjoint, and moreover $d\pi(A + B + AB)$ is positive (notice that $AB \in \mathcal{Q}$); this implies, as in the proof of Corollary~2.4 of \cite{nelson_representation_1959}, that $\overline{d\pi(A)}$ and $\overline{d\pi(B)}$ commute strongly.

In order to conclude, it will be sufficient to show that every operator of the form $\overline{d\pi(D)}$ for some $D \in \Alg$ is the joint function of some of the operators $\overline{d\pi(A)}$ for $A \in \mathcal{Q}$. In fact, let $D = D_1 + i D_2$, where
\[D_1 = (D+D^+)/2,\qquad D_2 = (D-D^+)/{2i}\]
are both formally self-adjoint elements of $\Alg$. Then
\[D_1^2, (D_1+1/2)^2, D_2^2, (D_2+1/2)^2\]
are all elements of $\mathcal{Q}$, and we can consider the joint spectral resolution $E$ on $\R^4$ of the corresponding operators in the representation $\pi$. We then have, for $j=1,2$,
\[d\pi(D_j) = d\pi((D_j+1/2)^2 - D_j^2 - 1/4) \subseteq \int_{\R^4} f_j \,dE,\]
where $f_j(\lambda_{1,1},\lambda_{1,2},\lambda_{2,1},\lambda_{2,2}) = \lambda_{j,2}- \lambda_{j,1} - 1/4$, so that also
\[d\pi(D) \subseteq \int_{\R^4} (f_1 + if_2) \,dE, \qquad d\pi(D^+) \subseteq \int_{\R^4} (f_1 - if_2) \,dE;\]
by passing to the adjoints in the second inclusion and using \eqref{eq:determinazioneaggiunto}, we then get
\[\overline{d\pi(D)} = \int_{\R^4} (f_1 + if_2) \,dE,\]
and we are done.
\end{proof}

A system $L_1,\dots,L_n \in \Diff(G)$ will be called a \emph{weighted subcoercive system}\index{system of differential operators!weighted subcoercive} if $L_1,\dots,L_n$ are formally self-adjoint and pairwise commuting, and if moreover the unital subalgebra of $\Diff(G)$ generated by $L_1,\dots,L_n$ contains a weighted subcoercive operator. From the previous proposition and the spectral theorem we then have immediately

\begin{cor}\label{cor:commutativealgebra}
Let $L_1,\dots,L_n \in \Diff(G)$ be a weighted subcoercive system. For every unitary representation $\pi$ of $G$, the operators $\overline{d\pi(L_1)},\dots,\overline{d\pi(L_n)}$ admit a joint spectral resolution $E_\pi$ on $\R^n$ and, for every polynomial $p \in \C[X_1,\dots,X_n]$,
\begin{equation}\label{eq:closureform}
\overline{d\pi(p(L_1,\dots,L_n))} = \int_{\R^n} p \,dE_\pi.
\end{equation}
\end{cor}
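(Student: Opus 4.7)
The plan is to reduce the statement to Proposition~\ref{prp:commutativealgebra}. Let $\Alg$ denote the unital subalgebra of $\Diff(G)$ generated by $L_1,\dots,L_n$. Pairwise commutativity of the $L_j$ makes $\Alg$ commutative; formal self-adjointness of each $L_j$ together with $(AB)^+ = B^+ A^+$ makes $\Alg$ closed under formal adjunction; and the very definition of ``weighted subcoercive system'' guarantees that $\Alg$ contains a weighted subcoercive operator. Hence $\Alg$ satisfies the hypotheses of Proposition~\ref{prp:commutativealgebra}, so for every unitary representation $\pi$ of $G$ the operators $\overline{d\pi(L_1)},\dots,\overline{d\pi(L_n)}$ commute strongly pairwise, and because $L_j^+ = L_j$ the identity~\eqref{eq:determinazioneaggiunto} specializes to $\overline{d\pi(L_j)} = d\pi(L_j)^*$, so they are in fact self-adjoint.

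The classical spectral theorem for finite families of strongly commuting self-adjoint operators then furnishes a unique joint spectral resolution $E_\pi$ on $\R^n$ such that $\int_{\R^n} \lambda_j \,dE_\pi(\lambda) = \overline{d\pi(L_j)}$ for each $j$, which takes care of the first assertion. What remains is to verify \eqref{eq:closureform}. Given $p \in \C[X_1,\dots,X_n]$, set $D = p(L_1,\dots,L_n) \in \Alg$; since the $L_j$ are formally self-adjoint and commute, $D^+ = \bar p(L_1,\dots,L_n) \in \Alg$ as well.

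For $v \in \HH^\infty$ the spectral integral $T := \int_{\R^n} p\,dE_\pi$ is defined on $v$ and satisfies $Tv = d\pi(D)v$: indeed $v$ lies in the domain of every power of each $\overline{d\pi(L_j)}$ (because $\HH^\infty$ is $d\pi(L_j)$-invariant), so by the joint functional calculus $v$ lies in the domain of any polynomial in the $\overline{d\pi(L_j)}$ and the algebraic action agrees with the functional-calculus action. Consequently $T$ is a closed extension of $d\pi(D)|_{\HH^\infty}$, hence $T \supseteq \overline{d\pi(D)}$. Applying the same reasoning to $\bar p$ gives $\int_{\R^n} \bar p \,dE_\pi \supseteq \overline{d\pi(D^+)}$; taking adjoints in this inclusion and invoking~\eqref{eq:determinazioneaggiunto} yields
\[
T \;=\; \left(\int_{\R^n} \bar p \,dE_\pi\right)^{\!*} \;\subseteq\; d\pi(D^+)^* \;=\; \overline{d\pi(D)},
\]
and the two inclusions combine to the desired equality.

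The essential work is already encapsulated in Proposition~\ref{prp:commutativealgebra}; the only nontrivial input beyond it is the ``extension-versus-adjoint'' argument in the last step, where~\eqref{eq:determinazioneaggiunto} is decisive. I expect this minor domain-matching to be the sole obstacle, everything else being formal bookkeeping.
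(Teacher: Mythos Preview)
Your proof is correct and is precisely the argument the paper has in mind: the paper states the corollary follows ``immediately'' from Proposition~\ref{prp:commutativealgebra} and the spectral theorem, and the adjoint trick you use in the last step is exactly the one already displayed at the end of the proof of Proposition~\ref{prp:commutativealgebra} (there with the four auxiliary operators from $\mathcal{Q}$, here with $L_1,\dots,L_n$ themselves). You have simply spelled out what the paper leaves implicit.
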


In the following, the sign of closure for operators of the form \eqref{eq:closureform} for some weighed subcoercive system $L_1,\dots,L_n$ will be omitted.

\subsection{Kernel transform and Plancherel measure}\label{subsection:plancherel}
Let $G$ be a connected Lie group. We denote by $\Cv^2(G)$ the set of the distributions $k \in \D'(G)$ such that the operator $f \mapsto f * k$ is bounded on $L^2(G)$. By the Schwartz kernel theorem, there is a one-to-one correspondence between $\Cv^2(G)$ and the set of bounded linear operators $T$ on $L^2(G)$ which commute with left translations:
\[T \LA_x = \LA_x T \qquad\text{for all $x \in G$;}\]
thus we endow $\Cv^2(G)$ with the C$^*$-algebra structure of the latter. We then have the continuous embedding $L^1(G) \subseteq \Cv^2(G)$.

Let $L_1,\dots,L_n$ be a weighted subcoercive system on $G$. By applying Corollary~\ref{cor:commutativealgebra} to the (right) regular representation on $L^2(G)$, we obtain a joint spectral resolution $E$ of $L_1,\dots,L_n$. In particular, for every $f \in L^\infty(\R^n,E)$, we can consider the operator
\[f(L) = f(L_1,\dots,L_n) = E[f] = \int_{\R^n} f\, dE,\]
which is a bounded left-invariant linear operator on $L^2(G)$, so that it admits a kernel $\breve f \in \Cv^2(G)$:
\[f(L)u = u * \breve f \qquad\text{for all $u \in \D(G)$.}\]
In place of $\breve f$, we use also the notation $\Kern_L f$. The correspondence
\[\Kern_L : f \mapsto \Kern_L f\]
will be called the \emph{kernel transform} associated with the weighted subcoercive system $L_1,\dots,L_n$. The previous definitions and the properties of the spectral integral then yield immediately

\begin{lem}\label{lem:composition}
\begin{itemize}
\item[(a)] 
$\Kern_L$ is an isometric embedding of $L^\infty(\R^n,E)$ into $\Cv^2(G)$; in particular, for every $f \in L^\infty(\R^n,E)$,
\[\|\breve f\|_{\Cv^2} = \|f\|_{L^\infty(\R^n,E)}, \qquad \breve {\overline{f}} = (\breve f)^*.\]
\item[(b)] If $f, g \in L^\infty(\R^n,E)$ and $\breve g \in L^2(G)$, then
\[(fg)\breve{} = f(L) \breve g,\]
and in particular, if $\breve g \in \D(G)$, then
\[(fg)\breve{} = \breve g * \breve f.\]
\item[(c)] If $f, g \in L^\infty(\R^n,E)$, and if $g(\lambda) = \lambda_j f(\lambda)$ for some $j \in \{1,\dots,n\}$, then
\[\breve g = L_j \breve f\]
in the sense of distributions.
\end{itemize}
\end{lem}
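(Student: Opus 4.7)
The plan is to derive all three statements from two essentially algebraic facts: that the spectral integration map $f \mapsto f(L) = E[f]$ is an isometric $*$-homomorphism of $L^\infty(\R^n,E)$ into the algebra of bounded operators on $L^2(G)$ (spectral theorem), and that the correspondence between bounded left-invariant operators on $L^2(G)$ and elements of $\Cv^2(G)$ is an isometric $*$-isomorphism of C$^*$-algebras (by construction of $\Cv^2(G)$). Composing these two yields (a) at once: $\|\breve f\|_{\Cv^2} = \|f(L)\|_{\Bdd(L^2)} = \|f\|_{L^\infty(\R^n,E)}$, and $f(L)^* = (\bar f)(L)$ so the kernel identity $\breve{\overline{f}} = (\breve f)^*$ follows from the fact that the adjoint of a right-convolution operator is right-convolution by the involuted kernel.

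For (b), I would start from the multiplicativity $(fg)(L) = f(L)\,g(L)$ and test both sides against $u \in \D(G)$. On the one hand, $u * (fg)\breve{} = (fg)(L)u$; on the other, $g(L)u = u * \breve g$, hence the right-hand side equals $f(L)(u * \breve g)$. The key step is the commutation
\[
f(L)(u * h) = u * (f(L) h)
\qquad (u \in \D(G),\ h \in L^2(G)),
\]
which holds because $f(L)$ is a bounded left-invariant operator on $L^2(G)$ and the right convolution $h \mapsto u * h$ can be written as a Bochner integral of left translates (with which $f(L)$ commutes by definition of $\Cv^2(G)$). Applying this with $h = \breve g \in L^2$ gives $u * (fg)\breve{} = u * (f(L)\breve g)$ for every $u \in \D(G)$, which forces $(fg)\breve{} = f(L)\breve g$. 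In the special case $\breve g \in \D(G)$, the definition of the kernel transform yields $f(L)\breve g = \breve g * \breve f$ directly.

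For (c), the same testing strategy works: for $u \in \D(G)$ we have $g(L)u = L_j\,f(L)u$ by functional calculus (both sides coincide on the smooth vectors of the regular representation). Translating via the kernel transform, $u * \breve g = L_j(u * \breve f)$. Since $L_j$ is a left-invariant differential operator, it commutes with right convolution in the distributional sense, i.e.\ $L_j(u * T) = u * (L_j T)$ for $u \in \D(G)$ and $T \in \D'(G)$; applying this on the right-hand side gives $u * \breve g = u * (L_j \breve f)$ for all $u \in \D(G)$, whence $\breve g = L_j \breve f$ in $\D'(G)$.

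The main obstacle, and really the only non-bookkeeping step, is the commutation identity used in (b) when $\breve g$ is only in $L^2(G)$ rather than in $\D(G)$: one has to justify moving the bounded operator $f(L)$ past a left convolution with a test function when the argument is merely $L^2$. This is handled by approximating $h \mapsto u * h$ by a Riemann sum of left translation operators $\LA_{x_i}$ (each of which commutes with $f(L)$), using the continuity of $L^2$-valued translation and the boundedness of $f(L)$ to pass to the limit; once this is in hand, (a), (b) and (c) are routine consequences of the spectral calculus and the definitions.
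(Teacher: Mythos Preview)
Your proof is correct and follows the approach the paper has in mind; the paper itself merely says the lemma ``yields immediately'' from the definitions and the properties of the spectral integral, and you have supplied precisely those details. The commutation step you single out in (b) is indeed the one point that is not pure bookkeeping, and your Riemann-sum/Bochner-integral justification (using that $f(L)$ is bounded and commutes with each $\LA_x$) is the standard way to handle it.
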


The resemblance of $\Kern_L$ with an (inverse) Fourier transform goes beyond Lemma \ref{lem:composition}, and more refined properties of $\Kern_L$ follow from the fact that the algebra generated by $L_1,\dots,L_n$ contains a weighted subcoercive operator. In fact, we can find a polynomial $p_*$ with real coefficients such that $p_*(L)$ is weighted subcoercive; by replacing $p_*$ with $p_*^{2r}$ for some large $r \in \N$, we may suppose that $p_* \geq 0$ on $\R^n$ and that moreover, if we set
\[p_0(\lambda) = p_*(\lambda) + \sum_{j=1}^n \lambda_j^2 + 1,\]
\[p_k(\lambda) = p_0(\lambda) + \lambda_k \qquad\text{for $k=1,\dots,n$,}\]
then $p_0(L),p_1(L),\dots,p_n(L)$ are all weighted subcoercive (see Lemma~\ref{lem:quasirockland}). Notice that the polynomials $p_0,p_1,\dots,p_n$ are all strictly positive on $\R^n$ and
\[\lim_{\lambda \to \infty} p_k(\lambda) = +\infty \qquad\text{for $k=0,\dots,n$;}\]
moreover, $p_0(L),\dots,p_n(L)$ generate the same subalgebra of $\Diff(G)$ as $L_1,\dots,L_n$ do.

\begin{lem}\label{lem:Jdensity}
The subalgebra of $C_0(\R^n)$ generated by the functions
\[e^{-p_0}, e^{-p_1}, \dots, e^{-p_n}.\]
is a dense $*$-subalgebra of $C_0(\R^n)$.
\end{lem}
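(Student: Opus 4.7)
The plan is to apply the locally compact form of the Stone--Weierstrass theorem: a subalgebra $\Alg$ of $C_0(\R^n)$ that is closed under complex conjugation, separates points of $\R^n$, and vanishes at no point of $\R^n$ is dense in $C_0(\R^n)$. So I would verify these three conditions for the algebra $\Alg$ generated by $e^{-p_0},e^{-p_1},\dots,e^{-p_n}$.

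First I would note that each generator $e^{-p_k}$ is indeed in $C_0(\R^n)$: the polynomial $p_k$ is real, strictly positive, and satisfies $p_k(\lambda) \to +\infty$ as $\lambda \to \infty$, so $e^{-p_k}$ is a continuous real-valued function vanishing at infinity. Since the generators are real-valued, the subalgebra $\Alg$ they generate is automatically closed under complex conjugation, i.e.\ it is a $*$-subalgebra of $C_0(\R^n)$. Non-vanishing is equally immediate: the single generator $e^{-p_0}$ is strictly positive on all of $\R^n$, so $\Alg$ vanishes nowhere.

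The only point requiring a short argument is point separation, and this is where the specific form $p_k = p_0 + \lambda_k$ for $k = 1,\dots,n$ enters. Given $\lambda \neq \mu$ in $\R^n$, choose $k \in \{1,\dots,n\}$ with $\lambda_k \neq \mu_k$. If $e^{-p_0(\lambda)} = e^{-p_0(\mu)}$ and $e^{-p_k(\lambda)} = e^{-p_k(\mu)}$, then $p_0(\lambda) = p_0(\mu)$ and $p_k(\lambda) = p_k(\mu)$; subtracting gives $\lambda_k = \mu_k$, a contradiction. Hence at least one of $e^{-p_0}$ or $e^{-p_k}$ takes different values on $\lambda$ and $\mu$, so the generators themselves separate points.

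With the three hypotheses of Stone--Weierstrass verified, density of $\Alg$ in $C_0(\R^n)$ follows at once. There is no real obstacle here; the only subtlety is to notice that the shift $p_k - p_0 = \lambda_k$ is precisely what encodes the coordinates and makes point separation work, which is why the polynomials $p_1,\dots,p_n$ were introduced in that particular form just before the lemma.
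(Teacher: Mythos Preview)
Your proof is correct and matches the paper's own argument essentially verbatim: both verify the Stone--Weierstrass hypotheses using that the generators are real-valued (hence the algebra is a $*$-subalgebra), that $e^{-p_0}$ never vanishes, and that the relation $p_k - p_0 = \lambda_k$ yields point separation. The only minor addition in your write-up is the explicit check that each $e^{-p_k}$ lies in $C_0(\R^n)$, which the paper takes for granted.
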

\begin{proof}
Since the functions $e^{-p_0}, e^{-p_1}, \dots, e^{-p_n}$ are real valued, the algebra generated by them is a $*$-subalgebra of $C_0(\R^n)$.

Notice that $e^{-p_0}$ is nowhere null. Moreover, if $\lambda,\lambda' \in \R^n$ and $\lambda \neq \lambda'$, then $\lambda_k \neq \lambda'_k$ for some $k \in \{1,\dots,n\}$, hence
\[\text{either}\quad e^{-p_0(\lambda)} \neq e^{-p_0(\lambda')} \quad\text{or}\quad e^{-p_k(\lambda)} \neq e^{-p_k(\lambda')}.\]
The conclusion then follows immediately by the Stone-Weierstrass theorem.
\end{proof}

Let now $\JJ_L$ be the subalgebra of $C_0(\R^n)$ generated by the functions of the form $e^{-q}$, where $q$ is a non-negative polynomial on $\R^n$ such that $q(L)$ is a weighted subcoercive operator on $G$ and $\lim_{\lambda \to \infty} q(\lambda) = +\infty$. Set moreover
\[C_0(L) = C_0(L_1,\dots,L_n) = \{\breve f \tc f \in C_0(\R^n)\}.\]
Finally, let $\Sigma$ be the joint spectrum\index{spectrum!joint spectrum} of $L_1,\dots,L_n$, i.e., the support of their joint spectral resolution $E$.

\begin{prp}\label{prp:Jdensity}
$C_0(L)$ is a sub-$C^*$-algebra of $\Cv^2(G)$, which is isometrically isomorphic to $C_0(\Sigma)$ via the kernel transform. Moreover
\[\Kern_L(\JJ_L) = \{\breve f \tc f \in \JJ_L\}\]
is a dense $*$-subalgebra of $C_0(L)$.
\end{prp}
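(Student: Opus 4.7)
My plan is to identify $\Kern_L$ with the spectral integral functional calculus $f \mapsto f(L)$ and exploit the fact that, by Corollary~\ref{cor:commutativealgebra}, this is an isometric $*$-homomorphism from $L^\infty(\R^n,E)$ into the $C^*$-algebra $\Cv^2(G)$ (multiplication in $\Cv^2(G)$ being operator composition on $L^2(G)$, which corresponds to multiplication in $L^\infty(\R^n,E)$ via spectral integration). Lemma~\ref{lem:composition}(a) records the isometry and the $*$-preservation.

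For the first statement, I would observe that, since $\Sigma = \supp E$ and for continuous $f$ the set $\{|f| > c\}$ is open, a continuous function $f$ satisfies $E(\{|f| > c\}) = 0$ if and only if $\sup_{\Sigma}|f| \leq c$. Hence for $f \in C_0(\R^n)$,
\[\|\Kern_L f\|_{\Cv^2} = \|f\|_{L^\infty(\R^n,E)} = \sup_{\lambda \in \Sigma} |f(\lambda)| = \|f|_\Sigma\|_{C_0(\Sigma)}.\]
By the locally compact version of the Tietze extension theorem, the restriction map $\ell\colon C_0(\R^n) \to C_0(\Sigma)$ is a surjective $*$-homomorphism; moreover, two functions in $C_0(\R^n)$ have the same image under $\Kern_L$ iff they agree on $\Sigma$ (by the displayed isometry). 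Thus $\Kern_L$ factors uniquely as $C_0(\R^n) \xrightarrow{\ell} C_0(\Sigma) \xrightarrow{\tilde \Kern_L} \Cv^2(G)$, with $\tilde \Kern_L$ an isometric $*$-homomorphism having image precisely $C_0(L)$. Being the isometric image of the complete space $C_0(\Sigma)$, the set $C_0(L)$ is closed in $\Cv^2(G)$, so it is a sub-$C^*$-algebra isometrically $*$-isomorphic to $C_0(\Sigma)$ via $\Kern_L$.

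For the density of $\Kern_L(\JJ_L)$ in $C_0(L)$, the isomorphism just established reduces the claim to showing that $\ell(\JJ_L)$ is dense in $C_0(\Sigma)$; since $\ell$ is continuous and surjective, it suffices to show that $\JJ_L$ is dense in $C_0(\R^n)$. By the construction preceding Lemma~\ref{lem:Jdensity}, each of the polynomials $p_0,p_1,\dots,p_n$ is non-negative, tends to $+\infty$ at infinity, and $p_k(L)$ is weighted subcoercive; thus each $e^{-p_k}$ lies in $\JJ_L$, and the subalgebra they generate is contained in $\JJ_L$. Lemma~\ref{lem:Jdensity} then yields the required density.

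There is no substantive obstacle here: the whole argument is a packaging of the spectral-theoretic fact that $f\mapsto f(L)$ is an isometric $*$-representation of $L^\infty(\R^n,E)$, together with Tietze extension on $\Sigma$ and Lemma~\ref{lem:Jdensity}. The one point requiring mild care is the reduction of the essential supremum with respect to $E$ to an honest supremum on $\Sigma$ for continuous functions, which uses only that $\Sigma = \supp E$.
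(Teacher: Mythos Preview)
Your proof is correct and follows essentially the same route as the paper's: both use Lemma~\ref{lem:composition}(a) to identify $\|\breve f\|_{\Cv^2}$ with $\sup_\Sigma|f|$ for $f\in C_0(\R^n)$, invoke Tietze extension to get surjectivity onto $C_0(\Sigma)$, and appeal to Lemma~\ref{lem:Jdensity} for the density of $\Kern_L(\JJ_L)$. Your version simply spells out the factorization through $C_0(\Sigma)$ and the reduction of the $E$-essential supremum to $\sup_\Sigma$ more explicitly than the paper does.
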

\begin{proof}
For a function $f \in C_0(\R^n)$, we have
\[\|f\|_{L^\infty(\R^n,E)} = \sup_\Sigma |f| = \|f|_\Sigma \|_{C_0(\Sigma)}.\]
Since every $g \in C_0(\Sigma)$ extends to an $f \in C_0(\R^n)$ by the Tietze-Urysohn extension theorem, the first part of the conclusion follows immediately from Lemma~\ref{lem:composition}(a). The second part follows instead from Lemma~\ref{lem:Jdensity}.
\end{proof}

The results on weighted subcoercive operators and their heat kernels imply that the elements of $\Kern_L(\JJ_L)$ are particularly well-behaved. The next proposition, which shows a sort of commutativity between joint functional calculus of $L_1,\dots,L_n$ and unitary representations of $G$, is a multivariate analogue of Proposition~2.1 of \cite{ludwig_sub-laplacians_2000}.

\begin{prp}\label{prp:Jl1}
For every $f \in \JJ_L$, we have $\breve f \in L^{1;\infty}(G) \cap C^\infty_0(G)$ and moreover, for every unitary representation $\pi$ of $G$,
\[\pi(\breve f) = f(d\pi(L_1),\dots,d\pi(L_n)).\]
If $G$ is amenable, the last identity holds for every $f \in C_0(\R^n)$ with $\breve f \in L^1(G)$.
\end{prp}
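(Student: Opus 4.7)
The plan is to prove the identity first for the generators $e^{-q}$ of $\JJ_L$, then propagate it to all of $\JJ_L$ by the algebra structure, and finally pass to $C_0(\R^n)$ in the amenable case by uniform approximation combined with the bound on $\pi$ by the reduced $C^*$-norm.

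For $f=e^{-q}$ with $q\ge 0$ and $q(L)$ weighted subcoercive, Corollary~\ref{cor:commutativealgebra} identifies the closure of $d\pi(q(L))$ in any unitary $\pi$ with the operator $q(d\pi(L))=\int q\,dE_\pi$; by Theorem~\ref{thm:robinsonterelst}(a,c) this closure generates a continuous semigroup $S^\pi_t$ which, by uniqueness of the generator, must coincide with $e^{-tq(d\pi(L))}$. Theorem~\ref{thm:robinsonterelst}(d) supplies a representation-independent kernel $k_t\in L^{1;\infty}(G)\cap C_0^\infty(G)$ with $\pi(k_t)=S^\pi_t$ for every $\pi$; taking $\pi$ to be the right regular representation gives $\breve f = k_1 \in L^{1;\infty}\cap C_0^\infty$, while evaluating at $t=1$ for general $\pi$ yields $\pi(\breve f) = f(d\pi(L))$. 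A generic element of $\JJ_L$ is a finite linear combination of products $f=\prod_{i=1}^{m} e^{-q_i}$, so by linearity it suffices to treat such a product. Iterating Lemma~\ref{lem:composition}(b), with each factor $\breve{e^{-q_i}}\in L^{1;\infty}\subseteq L^2$, expresses $\breve f$ as a convolution (in reverse order) of heat kernels in $L^{1;\infty}$; since $L^{1;\infty}(G)$ is closed under convolution (as $X(u*v)=(Xu)*v$ for left-invariant $X$) and Sobolev embedding gives $L^{1;\infty}(G)\subseteq C_0^\infty(G)$, we conclude $\breve f \in L^{1;\infty}\cap C_0^\infty$. The relation $\pi(u*v)=\pi(v)\pi(u)$ together with the single-generator case then yields $\pi(\breve f) = \prod_{i} e^{-q_i(d\pi(L))}$, and the strong commutativity of the $q_i(d\pi(L))$ from Proposition~\ref{prp:commutativealgebra}, applied in the joint functional calculus, reassembles this product into $f(d\pi(L))$.

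For the amenable case, given $f\in C_0(\R^n)$ with $\breve f\in L^1(G)$, choose (by Lemma~\ref{lem:Jdensity}) a sequence $f_n\in\JJ_L$ with $f_n\to f$ uniformly. Continuous functional calculus applied to $E_\pi$ gives $f_n(d\pi(L))\to f(d\pi(L))$ in operator norm. On the other side $\breve{f_n}-\breve f\in L^1(G)$, and amenability of $G$ is precisely the statement that $\|\pi(g)\|_{B(\HH_\pi)}\le\|g\|_{\Cv^2(G)}$ for every $g\in L^1(G)$ and every unitary $\pi$ (equivalently, $C^*(G)=C^*_r(G)$); combined with the isometry in Lemma~\ref{lem:composition}(a) this yields $\|\pi(\breve{f_n})-\pi(\breve f)\|\le\|f_n-f\|_{L^\infty(\R^n,E)}\le\|f_n-f\|_\infty\to 0$. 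Passing to the limit in the previously established identity $\pi(\breve{f_n})=f_n(d\pi(L))$ finishes the argument.

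The real obstacle lies in the amenable step: the bound $\|\pi(g)\|\le\|g\|_{\Cv^2}$ for $g\in L^1$ is equivalent to amenability, and without it one has only $\|\pi(g)\|\le\|g\|_{C^*(G)}$, which can be strictly larger than $\|g\|_{\Cv^2}$ and is not controlled by the $\|f_n-f\|_\infty$ provided by Lemma~\ref{lem:composition}(a), so the approximation argument collapses. Everything else is routine: recognising the elements of $\Kern_L(\JJ_L)$ as time-one heat kernels of weighted subcoercive operators via Theorem~\ref{thm:robinsonterelst}, and chaining them together using the convolution calculus of Lemma~\ref{lem:composition} and the strong commutativity from Proposition~\ref{prp:commutativealgebra}.
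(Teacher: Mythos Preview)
Your proof is correct and follows essentially the same route as the paper's: establish the identity for the generators $e^{-q}$ via Theorem~\ref{thm:robinsonterelst}(d) and Corollary~\ref{cor:commutativealgebra}, extend to $\JJ_L$ by the convolution/spectral-integral calculus of Lemma~\ref{lem:composition}, and in the amenable case approximate uniformly by elements of $\JJ_L$ and use weak containment in the regular representation. Your version is in fact slightly more explicit than the paper's (which compresses the extension to $\JJ_L$ into one sentence), and your citation of Lemma~\ref{lem:Jdensity} for the uniform approximation on all of $\R^n$ is the right one, whereas the paper cites Proposition~\ref{prp:Jdensity}, which strictly speaking only gives density in $C_0(\Sigma)$.
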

\begin{proof}
Suppose first that $f$ is one of the generators $e^{-q}$ of $\JJ_L$. Then, by Corollary~\ref{cor:commutativealgebra} and the properties of the spectral integral,
\[e^{-q}(d\pi(L_1),\dots,d\pi(L_n)) = e^{-d\pi(q(L))},\]
and, since $q(L)$ is weighted subcoercive, we obtain from Theorem~\ref{thm:robinsonterelst}(d) that $\Kern_L(e^{-q}) \in L^{1;\infty} \cap C^\infty_0(G)$ and $e^{-q}(d\pi(L_1,\dots,L_n)) = \pi(\Kern_L(e^{-q}))$. The result is easily extended to every $f \in \JJ_L$ by Lemma~\ref{lem:composition}, the properties of convolution and those of the spectral integral.

Suppose now that $G$ is amenable, $f \in C_0(\R^n)$ and $\breve f \in L^1(G)$. By Proposition~\ref{prp:Jdensity}, we can find a sequence $f_j \in \JJ_L$ which converges uniformly to $f$ on $\R^n$. This implies in particular, by the properties of the spectral integral, that
\[f_j(d\pi(L_1),\dots,d\pi(L_n)) \to f(d\pi(L_1),\dots,d\pi(L_n))\]
in the operator norm, but also that $\breve f_j \to \breve f$ in $\Cv^2(G)$. Since $G$ is amenable, the representation $\pi$ is weakly contained in the regular representation (see \cite{greenleaf_invariant_1969}, \S3.5), so that also $\pi(\breve f_j) \to \pi(\breve f)$ in the operator norm. But then the conclusion follows immediately from the first part of the proof.
\end{proof}

We are now going to exploit the good properties of the kernels in $\Kern_L(\JJ_L)$ to obtain a Plancherel formula for the kernel transform $\Kern_L$. It should be noticed that, in the context of commutative Banach $*$-algebras, a general abstract argument yielding this kind of results is available (see \S26J of \cite{loomis_introduction_1953}, and also Theorem~1.6.1 of \cite{gangolli_harmonic_1988}). However, we believe that additional insight is provided by the explicit construction presented below, which follows essentially \cite{christ_multipliers_1991}, with some modifications due to our multivariate and possibly non-unimodular setting.

\begin{prp}\label{prp:compactlysupported}
If $f \in L^\infty(\R^n,E)$ is compactly supported, then
\[\breve f \in L^{2;\infty} \cap C^\infty_0(G).\]
\end{prp}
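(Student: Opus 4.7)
The overall plan is to reduce the general compactly supported bounded $f$ to the case of heat kernels already furnished by Theorem~\ref{thm:robinsonterelst}, by absorbing the unboundedness of the functional calculus into a rapidly decaying exponential factor. Concretely, I would fix the strictly positive polynomial $p_0$ introduced before Lemma~\ref{lem:Jdensity} (so that $p_0(L)$ is weighted subcoercive), fix an arbitrary $t > 0$, and set $g_t(\lambda) = e^{-tp_0(\lambda)} \in \JJ_L$, denoting by $k_t = \Kern_L(g_t)$ the associated heat kernel of $p_0(L)$.

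The first step is to refine Proposition~\ref{prp:Jl1} for this specific kernel and show that $k_t \in L^{2;\infty}(G)$. By Theorem~\ref{thm:robinsonterelst}(e) applied to $p_0(L)$, for every $\alpha \in J(d)$ there is a pointwise Gaussian bound
\[
|A^\alpha k_t(x)| \leq c\, e^{-b(|x|_*^m/t)^{1/(m-1)}}
\]
with constants depending on $\alpha$ and $t$. Since the exponent $m/(m-1) > 1$ produces a decay in $|x|_*$ faster than any exponential, and $G$ has at most exponential volume growth (cf.\ \S\ref{subsection:controldistance}), this bound is square integrable, so $A^\alpha k_t \in L^2(G)$ for every $\alpha$, whence $k_t \in L^{2;\infty}(G)$.

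Given then $f \in L^\infty(\R^n, E)$ with compact support, I would introduce $h(\lambda) = f(\lambda)\, e^{tp_0(\lambda)}$, which is again bounded with (essentially) compact support and hence lies in $L^\infty(\R^n,E)$, and which satisfies $f = h \cdot g_t$. Lemma~\ref{lem:composition}(b) applies since $\breve g_t = k_t \in L^2(G)$, and yields
\[
\breve f = h(L)\, k_t.
\]
The operator $h(L)$ is bounded on $L^2(G)$ and, being a function of the left-invariant operators $L_1,\dots,L_n$, commutes with every right translation $\RA_y$. Consequently the smoothness of $y \mapsto \RA_y k_t$ as a map $G \to L^2(G)$ (the very meaning of $k_t \in L^{2;\infty}$) transfers to $y \mapsto \RA_y(h(L) k_t) = h(L)\RA_y k_t$, so $\breve f = h(L) k_t \in L^{2;\infty}(G)$. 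A standard Sobolev embedding for the right regular representation (sufficiently many left-invariant $L^2$-derivatives force both smoothness and decay at infinity) gives $L^{2;\infty}(G) \subseteq C_0^\infty(G)$, which finally produces the required membership in $L^{2;\infty} \cap C_0^\infty(G)$.

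The main technical point I expect is the first step: one must genuinely exploit the super-exponential rate $m/(m-1) > 1$ in the Gaussian bound to defeat the possibly exponential volume growth of $G$. The subsequent passage from $k_t$ to $\breve f = h(L) k_t$ is then essentially bookkeeping inside the spectral functional calculus, together with the elementary observation that bounded multipliers of $L_1,\dots,L_n$ automatically preserve smooth vectors of the regular representation.
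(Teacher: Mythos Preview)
Your overall strategy---factor out a heat multiplier and use Lemma~\ref{lem:composition}(b)---is the same as the paper's, but your justification of the key step contains an error. You assert that $h(L)$, being a function of the left-invariant operators $L_1,\dots,L_n$, commutes with every right translation $\RA_y$. This is false on a non-abelian $G$: conjugation by $\RA_y$ acts on $\Diff(G)$ as $\Ad(y)$, so $\RA_y L_j \RA_y^{-1}=\Ad(y)L_j$, which differs from $L_j$ unless $L_j$ happens to be $\Ad(y)$-invariant. Hence the passage ``smoothness of $y\mapsto\RA_y k_t$ transfers to $y\mapsto\RA_y(h(L)k_t)=h(L)\RA_y k_t$'' is unjustified as written. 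The conclusion that $h(L)$ preserves $L^{2;\infty}$ is nonetheless true, and you can repair the argument via Theorem~\ref{thm:robinsonterelst}(b): $L^{2;\infty}=\bigcap_n D(\overline{p_0(L)}^{\,n})$, and since $h$ is bounded and commutes with $p_0$ in the joint functional calculus, $h(L)$ preserves each domain $D(\overline{p_0(L)}^{\,n})$ by elementary spectral theory.

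The paper's proof avoids this detour (and also your appeal to Gaussian bounds for $k_t\in L^{2;\infty}$) by a short two-step bootstrap. Writing $\xi_1=e^{-p_*}$, one has $\breve\xi_1\in L^{1;\infty}\cap C_0^\infty$ from Theorem~\ref{thm:robinsonterelst}(d), hence $\breve\xi_1\in L^1\cap L^\infty\subseteq L^2$. With $g=f/\xi_1\in L^\infty(E)$, Lemma~\ref{lem:composition}(b) gives $\breve f=g(L)\breve\xi_1\in L^2$. But $g$ is again compactly supported, so the \emph{same} argument yields $\breve g\in L^2$, and now one flips the factorisation: $\breve f=\xi_1(L)\breve g=\breve g*\breve\xi_1$. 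Convolving an $L^2$ function with an element of $L^{1;\infty}\cap C_0^\infty$ lands in $L^{2;\infty}\cap C_0^\infty$ by Young-type inequalities, and no claim about multipliers preserving smooth vectors (or a Sobolev embedding $L^{2;\infty}\subseteq C_0^\infty$) is needed.
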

\begin{proof}
Let $\xi_t = e^{-tp_*}$ for $t > 0$, so that $\breve \xi_t \in L^{1;\infty}(G) \cap C_0^\infty(G)$.

Since $f$ is compactly supported, $f = g \,\xi_1$ with $g = f/\xi_1 \in L^\infty(\R^n,E)$, so that $\breve f = g(L) \breve \xi_{1} \in L^2(G)$
by Lemma~\ref{lem:composition}. Analogously, being $g$ compactly supported, also $\breve g \in L^2(G)$, but then $\breve f = \xi_1(L) \breve g = \breve g * \breve \xi_1 \in L^{2;\infty} \cap C^\infty_0(G)$, by Lemma~\ref{lem:composition} and properties of convolution.
\end{proof}

Thus we have plenty of kernels $\breve f$ which are in $L^2(G)$; as we are going see, the $L^2$-norm can be interpreted as an operator norm of a convolution operator. Let $\|\cdot\|_{\hat 2}$ denote the $L^2$ norm with respect to the left Haar measure $\Delta \mu$ (where $\Delta$ is the modular function), and correspondingly $\|\cdot\|_{\hat 2 \to \infty}$ the operator norm from $L^2(G,\Delta \mu)$ to $L^\infty(G)$; then it is easily shown that

\begin{lem}\label{lem:kernelnorm}
For all $f \in L^\infty(E)$, we have $\breve f \in L^2(G)$ if and only if
\[\|f(L)\|_{\hat 2 \to \infty} < \infty,\]
and in this case $\|\breve f\|_2 = \|f(L)\|_{\hat 2 \to \infty}$.
\end{lem}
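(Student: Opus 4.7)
My strategy is to recast $(u * \breve f)(x)$ as an inner product in $L^2(G, \Delta\mu)$, so that Cauchy--Schwarz together with its equality case yields both implications and the norm identity in one stroke. Starting from $(u * \breve f)(x) = \int_G u(xy^{-1}) \breve f(y)\,d\mu(y)$, the substitution $y = z^{-1}x$, combined with right-invariance of $\mu$ and the inversion identity $d\mu(z^{-1}) = \Delta(z)\,d\mu(z)$ (which reflects that inversion sends right Haar measure $\mu$ to the left Haar measure $\Delta\mu$), should produce
\[
(u * \breve f)(x) \;=\; \int_G u(z)\,\breve f(z^{-1}x)\,\Delta(z)\,d\mu(z) \;=\; \langle u, v_x \rangle_{L^2(\Delta\mu)},
\]
where $v_x(z) = \overline{\breve f(z^{-1}x)}$. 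A second application of the same two identities to $\int_G |\breve f(z^{-1}x)|^2 \Delta(z)\,d\mu(z)$, under the substitution $w = z^{-1}x$, collapses $\Delta(z)\,d\mu(z)$ to $d\mu(w)$, giving the key fact $\|v_x\|_{\hat 2} = \|\breve f\|_{L^2(\mu)}$ independently of $x \in G$.

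Granting $\breve f \in L^2(\mu)$, Cauchy--Schwarz in $L^2(\Delta\mu)$ then gives $|(u * \breve f)(x)| \leq \|u\|_{\hat 2}\,\|\breve f\|_2$ for every $u \in \D(G)$ and $x \in G$; density of $\D(G)$ in $L^2(\Delta\mu)$ extends the bound to all $u$ in that space, so $\|f(L)\|_{\hat 2 \to \infty} \leq \|\breve f\|_2$. For the matching lower bound I would invoke the equality case of Cauchy--Schwarz: for any fixed $x$, taking $u = v_x/\|v_x\|_{\hat 2}$ (well-defined if $\breve f \neq 0$) I get $(u * \breve f)(x) = \|v_x\|_{\hat 2} = \|\breve f\|_2$, whence $\|f(L)\|_{\hat 2 \to \infty} \geq \|\breve f\|_2$.

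For the converse implication I would suppose $M := \|f(L)\|_{\hat 2 \to \infty} < \infty$ without \emph{a priori} assuming $\breve f \in L^2(\mu)$. For $u \in \D(G)$ the convolution $u * \breve f$ is smooth (test function convolved with a distribution), and evaluating at $x = e$ and using the hypothesis yields
\[
\left| \int_G u(z)\,\breve f(z^{-1})\,\Delta(z)\,d\mu(z) \right| \;=\; |(u * \breve f)(e)| \;\leq\; M\,\|u\|_{\hat 2}.
\]
Hence $u \mapsto (u * \breve f)(e)$ is a bounded linear functional on the dense subspace $\D(G) \subseteq L^2(\Delta\mu)$, so by Riesz representation it equals $u \mapsto \langle u, v \rangle_{\hat 2}$ for some $v \in L^2(\Delta\mu)$ with $\|v\|_{\hat 2} \leq M$. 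Matching with the displayed formula forces $\breve f(z^{-1}) = \overline{v(z)}$ as distributions, so $\breve f$ agrees a.e.\ with $y \mapsto \overline{v(y^{-1})}$, which lies in $L^2(\mu)$ with $L^2$-norm equal to $\|v\|_{\hat 2} \leq M$ (once more by the inversion substitution). Combined with the forward direction, this yields the norm identity. The main obstacle is the careful bookkeeping of modular-function factors across the nested changes of variables, and justifying the final distributional identifications before $L^2$-membership of $\breve f$ has been secured.
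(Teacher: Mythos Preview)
The paper does not prove this lemma; it is introduced with the phrase ``then it is easily shown that'' and no argument is given. Your proof is correct and is exactly the natural one: the convolution formula together with the inversion identity $d\mu(z^{-1}) = \Delta(z)\,d\mu(z)$ exhibits $(u*\breve f)(x)$ as the $L^2(\Delta\mu)$-pairing of $u$ with a function whose $\|\cdot\|_{\hat 2}$-norm is $\|\breve f\|_{L^2(\mu)}$ independently of $x$, after which Cauchy--Schwarz (forward direction) and Riesz representation (converse) finish the job.

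Two minor clean-ups. For the lower bound $\|f(L)\|_{\hat 2 \to \infty} \geq \|\breve f\|_2$, rather than taking $u = v_x/\|v_x\|_{\hat 2}$ literally (this $u$ need not lie in $\D(G)$, which is where the operator $f(L)$ is unambiguously defined as convolution by $\breve f$), simply note that the functional $u \mapsto (u*\breve f)(x) = \langle u, v_x\rangle_{\hat 2}$ has norm $\|v_x\|_{\hat 2} = \|\breve f\|_2$ already on the dense subspace $\D(G)$, so one can choose $u_n \in \D(G)$ with $\|u_n\|_{\hat 2} = 1$ and $|(u_n*\breve f)(x)| \to \|\breve f\|_2$. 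For the converse, the distributional bookkeeping you flag is routine: since $(u*\breve f)(e)$ is the pairing of the distribution $\breve f$ with the test function $y \mapsto u(y^{-1})$, the Riesz representative $v$ forces $\breve f$ to agree, as a distribution, with the locally integrable function $y \mapsto \overline{v(y^{-1})}$, whose $L^2(\mu)$-norm is $\|v\|_{\hat 2}$ by one more application of the inversion identity.
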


We are now able to obtain a Plancherel formula for the kernel transform.

\begin{thm}\label{thm:plancherel}
The identity
\[\sigma(A) = \|E(A)\|_{\hat 2 \to \infty}^2 \qquad\text{for all Borel $A \subseteq \R^n$}\]
defines a regular Borel measure on $\R^n$ with support $\Sigma$, whose negligible sets coincide with those of $E$ and such that, for all $f \in L^\infty(E)$,
\[\int_{\R^n} |f|^2 \,d\sigma = \|f(L)\|_{\hat 2 \to \infty}^2 = \|\breve f\|_2^2.\]
\end{thm}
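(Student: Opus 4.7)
The key structural input is an orthogonality relation for the kernels: if $A,B$ are disjoint bounded Borel subsets of $\R^n$, then Proposition~\ref{prp:compactlysupported} gives $\breve{\chi_A},\breve{\chi_B}\in L^2(G)$, and $E(A)E(B)=E(A\cap B)=0$ together with Lemma~\ref{lem:composition}(b) yields $E(B)\breve{\chi_A}=\breve{\chi_{A\cap B}}=0$. Since $E(B)\breve{\chi_B}=\breve{\chi_B}$ (Lemma~\ref{lem:composition}(b) with $f=g=\chi_B$) and $E(B)$ is self-adjoint, one obtains $\langle\breve{\chi_A},\breve{\chi_B}\rangle=\langle E(B)\breve{\chi_A},\breve{\chi_B}\rangle=0$. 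Combined with Lemma~\ref{lem:kernelnorm}, this already shows that $\sigma(A)=\|\breve{\chi_A}\|_2^2<\infty$ for every bounded Borel $A$.

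To establish countable additivity, fix a bounded Borel $A$, set $\phi_A=\breve{\chi_A}\in L^2(G)$, and consider the finite scalar spectral measure $\sigma_{\phi_A}(B)=\langle E(B)\phi_A,\phi_A\rangle$. For Borel $B\subseteq A$, $E(B)\phi_A=\breve{\chi_B}$, and the orthogonality of $\breve{\chi_B}$ to $\breve{\chi_{A\setminus B}}$ gives $\sigma_{\phi_A}(B)=\langle\breve{\chi_B},\breve{\chi_A}\rangle=\|\breve{\chi_B}\|_2^2=\sigma(B)$; thus $\sigma$ agrees with the measure $\sigma_{\phi_A}$ on Borel subsets of $A$ and is countably additive there. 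Choosing a bounded Borel exhaustion $K_n\uparrow\R^n$ then assembles $\sigma$ into a locally finite, $\sigma$-finite Borel measure on $\R^n$, which is automatically regular. The identity $\sigma(A)=\|E(A)\|_{\hat 2\to\infty}^2=\|\breve{\chi_A}\|_2^2$ for arbitrary Borel $A$ (with $+\infty$ on both sides when $\breve{\chi_A}\notin L^2$) follows from $\breve{\chi_{A\cap K_n}}=E(K_n)\breve{\chi_A}$: if $\breve{\chi_A}\in L^2$, strong convergence $E(K_n)\to I$ gives $\|\breve{\chi_{A\cap K_n}}\|_2\uparrow\|\breve{\chi_A}\|_2$; in the divergent case one argues by contradiction, since a uniform $L^2$-bound would make $\breve{\chi_{A\cap K_n}}$ Cauchy (by orthogonality) and produce an $L^2$-limit $\phi$ which must agree with the distribution $\breve{\chi_A}$, as seen by testing against $\psi\in\D(G)$ via $\psi*\breve{\chi_{A\cap K_n}}=\chi_{A\cap K_n}(L)\psi\to\chi_A(L)\psi=\psi*\breve{\chi_A}$ in $L^2$ by the spectral theorem. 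Coincidence of null sets, whence $\supp\sigma=\supp E=\Sigma$, is then immediate from the $\Cv^2$-isometry in Lemma~\ref{lem:composition}(a) applied on bounded pieces together with the same exhaustion.

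Finally, the Plancherel formula is proved in successive stages: it holds by definition for $f=\chi_A$ with $A$ bounded; it extends by linearity and kernel orthogonality to simple functions supported on a bounded set; it passes to any compactly supported $f\in L^\infty(E)$ via uniform approximation by such simples, because the case already proven yields $\|\breve{f_n}-\breve f\|_2^2=\int|f_n-f|^2\,d\sigma\to 0$ by finiteness of $\sigma$ on the common support; and the general case is reached by setting $f_n=f\chi_{K_n}$ and combining monotone convergence $|f_n|^2\uparrow|f|^2$ with the same Cauchy-plus-distributional-identification device. I expect the main obstacle to be precisely this last identification step, where one has to bridge $L^2$ convergence of kernels with the distribution $\breve f$ that a priori lives only in $\Cv^2(G)$, and argue consistently in both the finite and infinite $\sigma$-mass regimes.
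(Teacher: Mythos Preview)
Your proposal is correct and follows essentially the same route as the paper. Both arguments rest on the same three ingredients: the orthogonality $\langle\breve\chi_A,\breve\chi_B\rangle=0$ for disjoint $A,B$ (via $E(B)\breve\chi_A=\breve\chi_{A\cap B}$), the identification of $\sigma$ on subsets of a set $A$ of finite mass with the scalar spectral measure $\langle E(\cdot)\breve\chi_A,\breve\chi_A\rangle$, and the distributional identification of an $L^2$ limit of kernels with the a~priori $\Cv^2$ kernel $\breve\chi_A$ by testing against $\psi\in\D(G)$ through $\psi*\breve\chi_{A\cap K_n}=E(A\cap K_n)\psi\to E(A)\psi$.

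The only differences are organizational. The paper proves countable additivity of $\sigma$ directly (showing that if $\sum_j\sigma(A_j)<\infty$ then the orthogonal series $\sum_j\breve\chi_{A_j}$ converges in $L^2$ and its limit must be $\breve\chi_A$), and only afterwards records the identity $\sigma|_{\mathcal{P}(A)}=\langle E(\cdot)\breve\chi_A,\breve\chi_A\rangle$; you do the reverse, which is arguably cleaner since the scalar spectral measure is already known to be countably additive. For the Plancherel formula, the paper uses the spectral-integral identity $\int|f|^2\,d\langle E(\cdot)\breve\chi_A,\breve\chi_A\rangle=\|f(L)\breve\chi_A\|_2^2=\|\breve f\|_2^2$ in one stroke for $f$ supported in $A$, then sums over a countable \emph{partition} of $\R^n$ into relatively compact pieces; your route through characteristic, simple, and compactly supported functions followed by an exhaustion is more pedestrian but equivalent, and the ``Cauchy-plus-distributional-identification device'' you isolate for the unbounded-support step is exactly the mechanism the paper uses in its countable-additivity argument.
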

\begin{proof}
Clearly $\sigma(\emptyset) = 0$. Moreover, $\sigma$ is monotone: if $A \subseteq A'$ are Borel subsets of $\R^n$ and $\sigma(A') < \infty$, then, by Lemma~\ref{lem:kernelnorm}, $\breve\chr_{A'} \in L^2(G)$, so that, by Lemma~\ref{lem:composition}, also
\[\breve\chr_A = E(A) \breve\chr_{A'} \in L^2(G) \qquad\text{and}\qquad \|\breve \chr_A\|_2 \leq \|\breve\chr_{A'}\|_2,\]
i.e., $\sigma(A) \leq \sigma(A')$.

We now prove that $\sigma$ is finitely additive. Let $A, B \subseteq \R^n$ be disjoint Borel sets. By monotonicity, we may suppose that $\sigma(A),\sigma(B) < \infty$. Then, by Lemma~\ref{lem:kernelnorm}, both $\breve\chr_{A},\breve\chr_{B} \in L^2(G)$, but
\[E(A \cup B) = E(A) + E(B),\]
so that clearly $\breve\chr_{A \cup B} = \breve\chr_A + \breve\chr_B \in L^2(G)$, and moreover, by Lemma~\ref{lem:kernelnorm},
\[\sigma(A \cup B) = \|\breve\chr_{A \cup B}\|_2^2 = \|\breve\chr_A\|_2^2 + \|\breve\chr_B\|_2^2 = \sigma(A) + \sigma(B),\]
since $\breve\chr_A = E(A)\breve\chr_A \perp E(B)\breve\chr_B = \breve\chr_B$ in $L^2(G)$ by Lemma~\ref{lem:composition}.

Finite additivity implies that, if $A_j$ ($j \in \N$) are pairwise disjoint Borel subsets of $\R^n$ and $A = \bigcup_j A_j$, then
\[\sum_j \sigma(A_j) \leq \sigma(A).\]
In particular, if the sum on the left-hand side diverges, then we have an equality. Suppose instead that the left-hand side sum converges. Then, as before, the $\breve\chr_{A_j}$ are pairwise orthogonal elements of $L^2(G)$, and their sum converges in $L^2(G)$ to some $k \in L^2(G)$ such that $\|k\|_2^2 = \sum_j \sigma(A_j)$. But then, if $u \in \D(G)$, we have that, on one hand, by Lemma~\ref{lem:kernelnorm},
\[\sum_j u * \breve\chr_{A_j} = u * k \qquad\text{uniformly,}\]
and, on the other hand,
\[\sum_j u * \breve\chr_{A_j} = \sum_j E(A_j) u = E(A)u \qquad\text{in $L^2(G)$,}\]
which gives, by uniqueness of limits and arbitarity of $u \in \D(G)$,
\[\breve\chr_A = k \in L^2(G) \qquad\text{and}\qquad \sigma(A) = \|k\|_2^2 = \sum_j \sigma(A_j).\]

It is immediate from the definition that a Borel subset of $\R^n$ is $\sigma$-negligible if and only if it is $E$-negligible; in particular $\supp \sigma = \supp E = \Sigma$.

By Proposition~\ref{prp:compactlysupported}, $\sigma(A) = \|\chr_A(L)\|_{\hat 2 \to \infty}^2 = \|\breve\chr_A\|_2^2$ is finite if $A \subseteq \R^n$ is relatively compact. We can then conclude, by Theorem~2.18 of \cite{rudin_real_1974}, that $\sigma$ is regular.

Notice that, for all Borel $A \subseteq \R^n$ with $\sigma(A) < \infty$, $\sigma$ coincides with the measure $\langle E(\cdot) \breve \chr_A, \breve \chr_A\rangle$ on the subsets of $A$: in fact, for all Borel $B \subseteq \R^n$,
\[\langle E(B) \breve\chr_A , \breve\chr_A \rangle = \|\breve\chr_{A \cap B}\|_2^2 = \sigma(A \cap B)\]
by Lemmata~\ref{lem:kernelnorm} and \ref{lem:composition}. In particular, for all $f \in L^\infty(E)$ with $\supp f \subseteq A$,
\[\int_{\R^n} |f|^2 \,d\sigma = \int_{\R^n} |f(\lambda)|^2 \,\langle E(d\lambda) \breve\chr_A, \breve\chr_A\rangle = \|f(L) \breve\chr_A\|_2^2 = \|\breve f\|_2^2 = \|f(L)\|_{\hat 2 \to \infty}^2\]
by the properties of the spectral integral and Lemmata~\ref{lem:kernelnorm} and \ref{lem:composition}.

Take now a countable partition of $\R^n$ made of relatively compact Borel subsets $A_j$ ($j \in \N$). Then, for every $f \in L^\infty(\R^n,E)$, analogously as before we obtain\
\[\|f(L)\|_{\hat 2 \to \infty}^2 = \sum_j \|E(A_j) f(L)\|_{\hat 2 \to \infty}^2 = \sum_j \|\Kern_L(f \chr_{A_j})\|_2^2,\]
and putting all together we get the conclusion.
\end{proof}

The measure $\sigma$ of the previous proposition is called the \emph{Plancherel measure} associated with the system $L_1,\dots,L_n$. Notice that
\[L^\infty(\R^n,E) = L^\infty(\sigma).\]

We now show that the estimates (for small times) on the heat kernel of weighted subcoercive operators give information on the behaviour at infinity of the Plancherel measure. In the following $|\cdot|_2$ shall denote the Euclidean norm.

\begin{prp}\label{prp:plancherelpolynomialgrowth}
The Plancherel measure $\sigma$ on $\R^n$ associated with a weighted subcoercive system $L_1,\dots,L_n$ has (at most) polynomial growth at infinity.
\end{prp}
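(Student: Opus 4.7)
The plan is to reduce polynomial growth of $\sigma$ to a standard heat-kernel/Plancherel duality, using the polynomial $p_0$ constructed just before Lemma~\ref{lem:Jdensity}. Recall that $p_0 \geq 0$, $p_0(\lambda) \to +\infty$ as $|\lambda|_2 \to \infty$, and $p_0(L)$ is a weighted subcoercive operator; call $m$ its weighted degree and $N = \deg p_0$ its degree as a polynomial in $\lambda$.

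First, I would identify the heat kernel. By Proposition~\ref{prp:Jl1} the convolution kernel $h_t := \Kern_L(e^{-tp_0}) \in L^{1;\infty}(G) \cap C_0^\infty(G)$ coincides with the kernel of $e^{-tp_0(L)}$ given by Theorem~\ref{thm:robinsonterelst}(d) applied to the right regular representation. Taking the supremum over $g \in G$ in Theorem~\ref{thm:robinsonterelst}(e) with $\alpha$ empty yields $\|h_t\|_\infty \leq c\, t^{-Q_*/m} e^{\omega t}$, and Theorem~\ref{thm:robinsonterelst}(f) with $\alpha$ empty and $\rho = 0$ yields $\|h_t\|_1 \leq c\, e^{\omega t}$. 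Interpolating via $\|h_t\|_2^2 \leq \|h_t\|_1 \|h_t\|_\infty$ gives
\[\|h_t\|_2^2 \leq C\, t^{-Q_*/m} e^{2\omega t} \qquad\text{for all } t > 0.\]

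Next, I would transfer this to $\sigma$ through the Plancherel identity (Theorem~\ref{thm:plancherel}) applied to $f = e^{-tp_0}$:
\[\int_{\R^n} e^{-2t p_0(\lambda)}\,d\sigma(\lambda) \;=\; \|h_t\|_2^2 \;\leq\; C\, t^{-Q_*/m} e^{2\omega t}.\]
Since $p_0$ has polynomial degree $N$, there is $M > 0$ such that $p_0(\lambda) \leq M R^N$ for every $\lambda$ in the Euclidean ball $B_R = \{|\lambda|_2 \leq R\}$ with $R \geq 1$. Restricting the integral above to $B_R$ and bounding the integrand from below by $e^{-2tMR^N}$ gives
\[\sigma(B_R) \;\leq\; C\, t^{-Q_*/m} e^{2\omega t + 2tMR^N}.\]

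Finally, I would optimize in $t$ by setting $t = R^{-N}$ for $R \geq 1$; then $2tMR^N = 2M$ is bounded, $e^{2\omega t} \leq e^{2\omega}$, and $t^{-Q_*/m} = R^{NQ_*/m}$, yielding $\sigma(B_R) \leq C' R^{NQ_*/m}$, i.e.\ polynomial growth of order at most $NQ_*/m$.

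I do not expect a genuine obstacle: all the heavy lifting (Gaussian estimates on $h_t$, the identification $h_t = \Kern_L(e^{-tp_0})$, and Plancherel) is already available from the preceding sections. The only subtlety is the choice of the free parameter $t$ in terms of $R$, needed so that the exponential factor $e^{2tMR^N}$ produced by the polynomial bound on $p_0$ is absorbed into a constant rather than blowing up; the scaling $t = R^{-N}$ achieves exactly this balance.
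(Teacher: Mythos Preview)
Your proof is correct and follows essentially the same route as the paper: bound the $L^2$-norm of the heat kernel via the Gaussian estimates (e) and the $L^1$ bound (f) of Theorem~\ref{thm:robinsonterelst}, transfer this to $\sigma$ through the Plancherel identity, and then convert sublevel sets of the polynomial into Euclidean balls. The paper works with $p_*$ and the slightly slicker pointwise inequality $\chi_{\{p_*\le r\}}\le e\,e^{-p_*/r}$ (so the choice $t=1/r$ is forced and no separate optimization step is needed), but this is only a cosmetic repackaging of your argument.
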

\begin{proof}
If $\xi_t(\lambda) = e^{-t p_*(\lambda)}$, then, for every $r > 0$,
\[\sigma(\{p_* \leq r\}) = \|\chr_{\{p_* \leq r\}}\|_{L^2(\sigma)}^2 \leq e^2 \|\xi_{1/r}\|_{L^2(\sigma)}^2 = e^2 \|\breve \xi_{1/r}\|_{L^2(G)}^2.\]
Since $\breve \xi_t$ is the heat kernel of the operator $p_*(L_1,\dots,L_n)$, Theorem~\ref{thm:robinsonterelst}(e,f) gives, for large $r$,
\[\sigma(\{p_* \leq r\}) \leq C r^{Q_*/m},\]
where $m$ is the degree of $p_*(L_1,\dots,L_n)$ with respect to a suitable reduced weighted algebraic basis of $\lie{g}$, and $Q_*$ is the homogeneous dimension of the corresponding contraction $\lie{g}_*$. In particular, if $d$ is the degree of the polynomial $p_*$, we get, for large $a > 0$,
\[\sigma(\{\lambda \tc |\lambda|_2 \leq a\}) \leq \sigma(\{p_* \leq C(1 + a)^d\}) \leq C (1+a)^{Q_* d/m},\]
which is the conclusion.
\end{proof}

The proof of Proposition~\ref{prp:plancherelpolynomialgrowth} shows that the degree of growth at infinity of the Plancherel measure $\sigma$ is somehow related to the ``local dimension'' $Q_*$ of the group with respect to the control distance associated with the chosen weighted subcoercive operator (see \S\ref{subsection:controldistance}). In \S\ref{subsection:homogeneity} we will obtain more precise information on the behaviour of $\sigma$ under the hypothesis of homogeneity.

By Theorem~\ref{thm:plancherel}, $\Kern_L|_{L^2 \cap L^\infty(\sigma)}$ extends to an isometry from $L^2(\sigma)$ onto a closed subspace of $L^2(G)$. We give now an alternative characterization of this subspace. Namely, let $\IS_L$ be the closure of $\Kern_L(\JJ_L)$ in $L^2(G)$.

\begin{prp}\label{prp:Jl2}
$\Kern_L|_{L^2 \cap L^\infty(\sigma)}$ extends to an isometric isomorphism
\[L^2(\sigma) \to \IS_L.\]
\end{prp}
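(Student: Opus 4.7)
The plan is to extend $\Kern_L$ from $L^2 \cap L^\infty(\sigma)$ to $L^2(\sigma)$ by density, using the Plancherel identity $\|f(L)\|_{\hat 2 \to \infty}^2 = \|f\|_{L^2(\sigma)}^2$ of Theorem~\ref{thm:plancherel}. Since $L^2 \cap L^\infty(\sigma)$ is dense in $L^2(\sigma)$ (e.g.\ by cutoff), this produces an isometric embedding $L^2(\sigma) \to L^2(G)$ whose range is automatically closed. To identify the range with $\IS_L$, it suffices to prove that $\JJ_L$ is dense in $L^2(\sigma)$, since then $\Kern_L(\JJ_L)$ will be dense in the range.

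A preliminary observation is that $\JJ_L \subseteq L^2(\sigma)$: each element of $\JJ_L$ is a finite linear combination of products of functions $e^{-q}$, with $q$ a non-negative polynomial tending to $+\infty$ at infinity; such a product is bounded by $1$ and decays superpolynomially, while $\sigma$ has polynomial growth at infinity by Proposition~\ref{prp:plancherelpolynomialgrowth}. For the density, by regularity of $\sigma$ it is enough to approximate each $f \in C_c(\R^n)$ in $L^2(\sigma)$. Fix $h = e^{-p_0} \in \JJ_L$ with $p_0$ as in the construction preceding Lemma~\ref{lem:Jdensity}; since $h > 0$ everywhere on $\R^n$, the function $f/h$ is continuous with compact support, hence lies in $C_0(\R^n)$. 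By Lemma~\ref{lem:Jdensity}, there exist $g_j \in \JJ_L$ with $\|g_j - f/h\|_\infty \to 0$, and then $h g_j \in \JJ_L$ (as $\JJ_L$ is an algebra) satisfies
\[\|h g_j - f\|_{L^2(\sigma)} \leq \|g_j - f/h\|_\infty \, \|h\|_{L^2(\sigma)} \to 0,\]
which yields the required density.

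Combining these steps, the extended $\Kern_L : L^2(\sigma) \to L^2(G)$ is an isometric embedding whose closed range contains $\Kern_L(\JJ_L)$ as a dense subset, hence coincides with $\overline{\Kern_L(\JJ_L)} = \IS_L$. The main obstacle is the density of $\JJ_L$ in $L^2(\sigma)$: Stone--Weierstrass (Lemma~\ref{lem:Jdensity}) yields only sup-norm density in $C_0(\R^n)$, which does not pass directly to $L^2(\sigma)$-density because $\sigma$ is infinite. The trick of multiplying by a fixed strictly positive $h \in \JJ_L \cap L^2(\sigma)$ is what bridges the two topologies, and the availability of such an $h$ rests on the polynomial growth of $\sigma$ from Proposition~\ref{prp:plancherelpolynomialgrowth}.
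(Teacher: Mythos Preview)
Your proof is correct and follows essentially the same approach as the paper: the paper likewise deduces the result from Theorem~\ref{thm:plancherel} together with the density of $\JJ_L$ in $L^2(\sigma)$ (stated as Lemma~\ref{lem:Jlq}), relying on the polynomial growth of $\sigma$ and the Stone--Weierstrass density of $\JJ_L$ in $C_0(\R^n)$. Your density argument is in fact a mild streamlining of the paper's: instead of approximating $f$ uniformly by $m_k \in \JJ_L$, passing to $m_k e^{-tp_0} \to f e^{-tp_0}$ in $L^2(\sigma)$ by dominated convergence, and then letting $t \to 0^+$ via monotone convergence, you divide by $h = e^{-p_0}$ first and approximate $f/h$ uniformly, collapsing the two limits into one.
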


In fact, this result follows immediately from Theorem~\ref{thm:plancherel} and the following

\begin{lem}\label{lem:Jlq}
$\JJ_L$ is dense in $L^q(\sigma)$ for $1 \leq q < \infty$.
\end{lem}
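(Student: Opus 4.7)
The plan is to combine the uniform density of $\JJ_L$ in $C_0(\R^n)$ (Lemma~\ref{lem:Jdensity}) with the polynomial growth of $\sigma$ at infinity (Proposition~\ref{prp:plancherelpolynomialgrowth}). By standard regularity of the Borel measure $\sigma$, the space $C_c(\R^n)$ is dense in $L^q(\sigma)$ for $1 \leq q < \infty$, so it suffices to approximate any $f \in C_c(\R^n)$ in $L^q(\sigma)$ by elements of $\JJ_L$.

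The main subtlety is that uniform approximation by elements of $\JJ_L$ does not immediately give $L^q$-approximation, since $\sigma$ is an infinite measure. To get around this I would introduce a fixed ``cutoff'' element $\psi \in \JJ_L$ which already belongs to $L^q(\sigma)$. A natural choice is $\psi = e^{-p_0}$, where $p_0$ is the polynomial from the construction preceding Lemma~\ref{lem:Jdensity}; note $p_0 \geq 1$ on $\R^n$, so $\psi$ is strictly positive everywhere, bounded by $1$, and $\psi \in L^q(\sigma)$ for every $q < \infty$, because $e^{-q p_0(\lambda)}$ decays faster than any polynomial in $|\lambda|_2$ while $\sigma$ has at most polynomial growth by Proposition~\ref{prp:plancherelpolynomialgrowth}.

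Given $f \in C_c(\R^n)$, I would factor $f = \psi \cdot h$ where $h = f/\psi$. Since $\psi$ is continuous and strictly positive and $f$ has compact support, $h \in C_c(\R^n) \subseteq C_0(\R^n)$. By Lemma~\ref{lem:Jdensity} there exist $g_j \in \JJ_L$ with $g_j \to h$ uniformly on $\R^n$; in particular the sequence $(g_j)$ is uniformly bounded, say by a constant $M$. Setting $f_j = \psi g_j$, each $f_j$ lies in $\JJ_L$ (which is an algebra) and satisfies the pointwise bound
\[
|f_j - f| = \psi \,|g_j - h| \leq (M + \|h\|_\infty)\,\psi,
\]
with the right-hand side in $L^q(\sigma)$. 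Since $f_j \to f$ pointwise, dominated convergence yields $f_j \to f$ in $L^q(\sigma)$, which completes the argument.

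The only step requiring real input (beyond standard measure theory) is producing the cutoff $\psi$ with the two properties needed simultaneously: membership in $\JJ_L$ and membership in $L^q(\sigma)$. Both are already in hand --- the first by construction of $\JJ_L$, the second by Proposition~\ref{prp:plancherelpolynomialgrowth} --- so no genuine obstacle remains; the proof is essentially a packaging of these two facts together with Lemma~\ref{lem:Jdensity}.
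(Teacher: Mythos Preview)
Your proof is correct and follows essentially the same strategy as the paper: both reduce to approximating $f \in C_c(\R^n)$, use the weight $e^{-p_0} \in \JJ_L \cap L^q(\sigma)$ (available thanks to Proposition~\ref{prp:plancherelpolynomialgrowth}) to convert the uniform approximation from Lemma~\ref{lem:Jdensity} into $L^q(\sigma)$-approximation, and then invoke dominated convergence. The only difference is organizational: the paper first approximates $f$ uniformly by $m_k \in \JJ_L$, multiplies by $e^{-tp_0}$ to get $m_k e^{-tp_0} \to f e^{-tp_0}$ in $L^q(\sigma)$, and then lets $t \to 0^+$ via monotone convergence, whereas you divide by $\psi = e^{-p_0}$ \emph{before} approximating, which collapses the two limits into one and is slightly more direct.
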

\begin{proof}
Since $\sigma$ has polynomial growth at infinity (see Proposition~\ref{prp:plancherelpolynomialgrowth}), whereas the elements of $\JJ_L$ decay exponentially, it is easily seen that $\JJ_L$ is contained (modulo restriction to $\Sigma$) in $L^1 \cap L^\infty(\sigma)$. Since $\sigma$ is a positive regular Borel measure on $\R^n$, in order to prove that the closure of $\JJ_L$ in $L^q(\sigma)$ is the whole $L^q(\sigma)$, it is sufficient to show that $C_c(\R^n)$ is contained in this closure (see \cite{rudin_real_1974}, Theorem~3.14).

Let then $m \in C_c(\R^n)$. By Lemma~\ref{lem:Jdensity}, we can find a sequence $m_k \in \JJ_L$ converging uniformly to $m$, so that $\sup_k \|m_k\|_\infty = C < \infty$. Thus, for every $t > 0$, $m_k e^{-tp_0}$ converges uniformly to $m e^{-tp_0}$, dominated by $ Ce^{-tp_0} \in L^q(\sigma)$, and consequently $m_k e^{-tp_0} \to m e^{-tp_0}$ also in $L^q(\sigma)$; we then have that $m e^{-tp_0}$ is in the closure of $\JJ_L$ in $L^q(\sigma)$ for all $t > 0$, and by monotone convergence also $m$ is in this closure.
\end{proof}

We now prove a sort of Riemann-Lebesgue lemma for $\Kern_L^{-1}$.

\begin{prp}\label{prp:riemannlebesgue1}
For every bounded Borel $f : \R^n \to \C$ with $\breve f \in L^1(G)$, we have
\[\|f\|_{L^\infty(\sigma)} \leq \|\breve f\|_1,\]
and moreover
\[\lim_{r \to +\infty} \| f \, \chr_{\{\lambda \tc |\lambda|_2 \geq r\}} \|_{L^\infty(\sigma)} = 0.\]
\end{prp}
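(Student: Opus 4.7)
My plan splits into two parts. The first inequality $\|f\|_{L^\infty(\sigma)} \leq \|\breve f\|_1$ is immediate from machinery already in place: by Lemma~\ref{lem:composition}(a), together with the identification $L^\infty(\R^n,E) = L^\infty(\sigma)$ arising from Theorem~\ref{thm:plancherel}, one has $\|f\|_{L^\infty(\sigma)} = \|\breve f\|_{\Cv^2}$, and Young's convolution inequality gives $\|\breve f\|_{\Cv^2} \leq \|\breve f\|_1$.

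The substantive statement is the vanishing at infinity. The plan is to approximate $f$ in $L^\infty(\sigma)$ by functions that manifestly decay at infinity on $\R^n$, and then to conclude by an $\epsilon/2$-argument based on the first inequality. The natural cutoff is $\zeta_t := e^{-tp_0}$, which lies in $\JJ_L$ since $p_0(L)$ is weighted subcoercive. Two features are crucial: pointwise, $p_0(\lambda) \geq |\lambda|_2^2 + 1$ gives $|\zeta_t(\lambda)| \leq e^{-t|\lambda|_2^2}$, so $\zeta_t$ decays on $\R^n$ for each fixed $t > 0$; convolution-theoretically, Proposition~\ref{prp:Jl1} places $\breve \zeta_t$ in $L^{1;\infty}(G) \cap C_0^\infty(G)$, and Corollary~\ref{cor:kernelapproximateidentity} ensures $(\breve \zeta_t)_{t > 0}$ is an approximate identity on $G$ as $t \to 0^+$.

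The key technical step is the identity $(f\zeta_t)\breve{} = \breve f * \breve\zeta_t$, which does not quite fit the hypotheses of Lemma~\ref{lem:composition}(b) (there one requires $\breve \zeta_t \in \D(G)$, whereas we only have $\breve \zeta_t \in L^1(G)$). The same argument works: for $u \in \D(G)$, one computes $(f\zeta_t)(L)u = f(L)(u * \breve\zeta_t) = (u * \breve\zeta_t) * \breve f = u * (\breve\zeta_t * \breve f)$, using that $f(L)$ agrees, as a bounded operator on $L^2(G)$, with right convolution by $\breve f \in L^1(G)$ (Young again). Applying the first inequality to $f - f\zeta_t$ then yields
\[\|f - f\zeta_t\|_{L^\infty(\sigma)} \leq \|\breve f - \breve f * \breve\zeta_t\|_1 \to 0 \qquad \text{as $t \to 0^+$,}\]
by the approximate-identity property.

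To finish: given $\epsilon > 0$, choose $t$ small enough that the above is less than $\epsilon/2$, and then $R$ large enough that $\|\breve f\|_1 \, e^{-tR^2} < \epsilon/2$; for $r \geq R$, the triangle inequality on $L^\infty(\sigma)$ yields
\[\|f\chr_{\{|\lambda|_2 \geq r\}}\|_{L^\infty(\sigma)} \leq \|f - f\zeta_t\|_{L^\infty(\sigma)} + \|f\|_{L^\infty(\sigma)} \sup_{|\lambda|_2 \geq r} |\zeta_t(\lambda)| < \epsilon.\]
The main (and essentially only) obstacle is verifying the convolution identity $(f\zeta_t)\breve{} = \breve f * \breve\zeta_t$ outside the hypotheses of Lemma~\ref{lem:composition}(b); once that is in hand, the remaining steps are routine.
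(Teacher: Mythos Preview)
Your proof is correct and follows essentially the same route as the paper: both use $e^{-tp_0}$ as a cutoff, invoke the approximate-identity property of its kernel to get $\|f(1-\zeta_t)\|_{L^\infty(\sigma)} \to 0$, and then exploit the decay of $\zeta_t$ at infinity for fixed $t$. One cosmetic point: your displayed computation actually yields $(f\zeta_t)\breve{} = \breve\zeta_t * \breve f$ rather than $\breve f * \breve\zeta_t$ (and the paper simply asserts the latter without the justification you supply); this is harmless since the two agree and the approximate-identity property holds on either side.
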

\begin{proof}
The first inequality follows immediately from Lemma~\ref{lem:composition} and Young's inequality.

Let $\xi_t = e^{-t p_0}$. Then, by Corollary~\ref{cor:kernelapproximateidentity}, $\breve\xi_t$ is an approximate identity for $t \to 0^+$. In particular, if $\breve f \in L^1(G)$, then
\[\Kern_L(f \xi_t) = \breve f * \breve \xi_t \to \breve f \qquad\text{in $L^1(G)$}\]
for $t \to 0^+$, which implies, by the first inequality, that
\[\lim_{t \to 0^+} \|f (1 - \xi_t) \|_{L^\infty(\sigma)} = 0.\]
Therefore, for every $\varepsilon > 0$, there exists $t > 0$ such that $\|f (1 - \xi_t) \|_{L^\infty(\sigma)} \leq \varepsilon$; since $p_0(\lambda) \to +\infty$ for $\lambda \to \infty$, we may find $r > 0$  such that
\[\|\xi_t \, \chr_{\{\lambda \tc |\lambda|_2 \geq r\}}\|_\infty \leq 1/2,\]
but then necessarily $\|f \, \chr_{\{\lambda \tc |\lambda|_2 \geq r\}}\|_\infty \leq 2\varepsilon$.
\end{proof}

An analogous (and neater) result for $\Kern_L$ is obtained under the additional hypothesis of unimodularity.

\begin{prp}\label{prp:riemannlebesgue2}
If $G$ is unimodular and $f \in L^1 \cap L^\infty(\sigma)$, then $\breve f \in C_0(G)$ and
\[\|\breve f\|_\infty \leq \|f\|_{L^1(\sigma)}.\]
\end{prp}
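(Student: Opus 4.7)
The plan is to factor $f$ as a product of two $L^2(\sigma)$-functions, so that $\breve f$ becomes a convolution of two $L^2(G)$-functions, and then use unimodularity together with Cauchy--Schwarz to control this convolution pointwise.

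Concretely, since $f \in L^1 \cap L^\infty(\sigma)$, I would set $f_1 = |f|^{1/2}$ and $f_2 = f/f_1$ (with $f_2 = 0$ on $\{f = 0\}$), obtaining $f = f_1 f_2$ with $f_1, f_2 \in L^2 \cap L^\infty(\sigma)$ and $\|f_j\|_{L^2(\sigma)}^2 = \|f\|_{L^1(\sigma)}$ for $j = 1, 2$. Theorem~\ref{thm:plancherel} then places $\breve f_1, \breve f_2$ in $L^2(G)$ with the same norms, and Lemma~\ref{lem:composition}(b), together with the definition of the kernel transform as a convolution operator, identifies
\[\breve f = (f_1 f_2)\breve{\phantom{f}} = f_1(L) \breve f_2 = \breve f_2 * \breve f_1\]
as an element of $L^2(G)$.

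Next I would exploit unimodularity: for every $x \in G$, inversion and left translation both preserve the (bi-invariant) Haar measure, so $\|\breve f_2(x(\cdot)^{-1})\|_{L^2(G)} = \|\breve f_2\|_{L^2(G)}$. By Cauchy--Schwarz, the integral
\[I(x) = \int_G \breve f_2(xy^{-1}) \, \breve f_1(y) \,dy\]
therefore converges absolutely for every $x \in G$ and satisfies $|I(x)| \leq \|\breve f_2\|_2 \|\breve f_1\|_2 = \|f\|_{L^1(\sigma)}$. Finally, to upgrade this to the $C_0(G)$ conclusion, I would approximate $\breve f_1, \breve f_2$ in $L^2(G)$ by sequences $u_n, v_n \in C_c(G)$; the convolutions $v_n * u_n$ lie in $C_c(G)$, and the same Cauchy--Schwarz estimate, being uniform in $x$, yields $v_n * u_n \to I$ uniformly on $G$, so $I \in C_0(G)$. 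Since $I$ agrees almost everywhere with $\breve f_2 * \breve f_1 = \breve f$, one concludes that $\breve f$ admits a $C_0(G)$-representative of sup-norm at most $\|f\|_{L^1(\sigma)}$.

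The one point that requires some care is reconciling the pointwise convolution $I$ with its $L^2$-interpretation; the role of the unimodularity hypothesis is precisely to keep the Cauchy--Schwarz bound uniform in the base point $x$, which in the general (non-unimodular) case would be spoiled by a modular-function factor and would obstruct both the pointwise estimate and the passage to the continuous representative.
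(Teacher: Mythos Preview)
Your proof is correct and follows essentially the same approach as the paper: factor $f$ as a product of two functions with $|g_j|^2 = |f|$, apply the Plancherel theorem to place $\breve g_1,\breve g_2$ in $L^2(G)$, and then use that the convolution of two $L^2$ functions on a unimodular group lies in $C_0(G)$ with the expected norm bound. The only difference is cosmetic: the paper dispatches the last step by citing Young's inequality (Hewitt--Ross, Theorem~20.16), whereas you reprove that special case by hand via Cauchy--Schwarz and $C_c$-approximation, and you also make explicit the identification of $f_1(L)\breve f_2$ with the pointwise convolution.
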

\begin{proof}
Since $f \in L^1 \cap L^\infty(\sigma)$, for some Borel $g_1,g_2 : \R^n \to \C$ we have
\[f = g_1 g_2 \qquad\text{and}\qquad |g_1|^2 = |g_2|^2 = |f|;\] in particular, $g_1,g_2 \in L^2 \cap L^\infty(\sigma)$. Therefore $\breve g_1, \breve g_2 \in L^2(G)$ by Theorem \ref{thm:plancherel} and
\[\breve f = \breve g_1 * \breve g_2\]
by Lemma \ref{lem:composition}, which gives the conclusion by Young's inequality (see \cite{hewitt_abstract_1979}, Theorem~20.16).
\end{proof}

\subsection{Change of generators}\label{section:automorphisms}

Let $L_1,\dots,L_n$ be a weighted subcoercive system on a connected Lie group $G$. Let $\sigma$ be the associated Plancherel measure on $\R^n$, and $\Sigma = \supp \sigma$. For given polynomials $P_1,\dots,P_{n'} : \R^n \to \R$, consider the operators
\[L'_1 = P_1(L_1,\dots,L_n), \quad\dots,\quad L'_{n'} = P_k(L_1,\dots,L_n),\]
and suppose that they still form a weighted subcoercive system. Let $\sigma'$ be the Plancherel measure on $\R^{n'}$ associated with the system $L'_1,\dots,L'_{n'}$, and $\Sigma'$ its support. We may ask if there is a relationship between the transforms $\Kern_{L}$ and $\Kern_{L'}$, and between the Plancherel measures $\sigma$ and $\sigma'$.

Let $P : \R^n \to \R^{n'}$ denote the polynomial map whose $j$-th component is the polynomial $P_j$.

\begin{lem}\label{lem:properwsub}
The map $P|_\Sigma : \Sigma \to \R^{n'}$ is a proper continuous map.
\end{lem}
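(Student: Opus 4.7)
The plan is to handle continuity trivially and focus on properness. Continuity of $P|_\Sigma$ is immediate since $P : \R^n \to \R^{n'}$ is a polynomial map, and hence continuous on $\R^n$. Moreover $\Sigma = \supp \sigma$ is closed in $\R^n$, so $P^{-1}(K) \cap \Sigma$ is automatically closed in $\R^n$ for every compact $K \subseteq \R^{n'}$; to establish properness it therefore suffices to show that this set is bounded.

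The key idea is to transfer a coercive weighted subcoercive polynomial from the $L'$ system to the $L$ system via $P$. Applying to the weighted subcoercive system $L'_1,\dots,L'_{n'}$ the construction carried out just after Lemma~\ref{lem:Jdensity} for $L_1,\dots,L_n$, I obtain a polynomial $p'_0 : \R^{n'} \to \R$ such that $p'_0(L'_1,\dots,L'_{n'})$ is weighted subcoercive on $G$ and $p'_0(\mu) \to +\infty$ as $|\mu|_2 \to +\infty$. Set $Q = p'_0 \circ P$, a polynomial on $\R^n$. Since
\[Q(L_1,\dots,L_n) = p'_0(L'_1,\dots,L'_{n'})\]
is weighted subcoercive, Theorem~\ref{thm:robinsonterelst}(d) (equivalently, Proposition~\ref{prp:Jl1}) gives $\Kern_L(e^{-Q}) \in L^{1;\infty}(G)$.

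Now I apply Proposition~\ref{prp:riemannlebesgue1} to the bounded continuous function $f = e^{-Q}$, whose kernel lies in $L^1(G)$ by the previous step; this yields
\[\lim_{r \to +\infty} \|e^{-Q}\,\chr_{\{|\lambda|_2 \geq r\}}\|_{L^\infty(\sigma)} = 0.\]
Since $e^{-Q}$ is continuous and $\sigma$ has full support $\Sigma$, this essential supremum coincides with the pointwise supremum of $e^{-Q}$ over $\Sigma \cap \{|\lambda|_2 \geq r\}$, so $Q(\lambda) \to +\infty$ as $\lambda \to \infty$ along $\Sigma$. Finally, given a compact $K \subseteq \R^{n'}$, the coercivity of $p'_0$ on $\R^{n'}$ gives an $M > 0$ with $K \subseteq \{p'_0 \leq M\}$; hence
\[P^{-1}(K) \cap \Sigma \subseteq \{\lambda \in \Sigma \tc Q(\lambda) \leq M\},\]
which is bounded by the coercivity of $Q$ on $\Sigma$ just established. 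This completes properness. The main conceptual obstacle is to recognize that Proposition~\ref{prp:riemannlebesgue1}, applied to the heat exponential of a weighted subcoercive operator built from $P$, upgrades the $L^1$-integrability of the heat kernel to a genuine pointwise coercivity statement on $\Sigma$; purely measure-theoretic arguments (finite $\sigma$-measure of preimages, polynomial growth of $\sigma$) would not by themselves rule out unbounded preimages.
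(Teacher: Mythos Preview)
Your proof is correct, but it takes a different route from the paper's. The paper argues more directly: it picks a non-negative polynomial $Q$ on $\R^{n'}$ with $Q(L')$ weighted subcoercive, then invokes the G\aa rding inequality of Theorem~\ref{thm:robinsonterelst}(iii) (applied, after raising to a suitable power $k$, in the regular representation) together with the spectral theorem to obtain an explicit pointwise bound
\[
\max_j |\lambda_j| \leq C\bigl(1 + Q(P(\lambda))^k\bigr) \qquad\text{for all } \lambda \in \Sigma.
\]
Properness is then immediate, since $Q$ is bounded on any compact $K \subseteq \R^{n'}$. Your route reaches the same coercivity of $p_0' \circ P$ on $\Sigma$, but indirectly, via heat-kernel integrability and Proposition~\ref{prp:riemannlebesgue1}; the approximate-identity argument hidden in that proposition is doing the work that the paper extracts in one stroke from the a~priori estimate. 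The paper's approach is shorter and more elementary; yours is a legitimate alternative that nicely illustrates how the Riemann--Lebesgue statement already encodes the needed coercivity on the spectrum.

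One small remark: your identification of $\|e^{-Q}\chr_{\{|\lambda|_2 \geq r\}}\|_{L^\infty(\sigma)}$ with the pointwise supremum over $\Sigma \cap \{|\lambda|_2 \geq r\}$ is not quite literal at boundary points $|\lambda|_2 = r$ (the set is closed, not open). But since any putative unbounded sequence in $\Sigma$ with $Q$ bounded eventually lies in the \emph{open} set $\{|\lambda|_2 > r\}$, the continuity-plus-support argument still gives the lower bound you need, and the coercivity conclusion is unaffected.
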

\begin{proof}
Since $L_1',\dots,L_{n'}'$ is a weighted subcoercive system, we can find a non-negative polynomial $Q : \R^{n'} \to \R$ such that $Q(L') = Q(P(L))$ is a weighted subcoercive operator. By Theorem~\ref{thm:robinsonterelst}(iii), for sufficiently large $C > 0$ and $k \in \N$, we have that
\[\max_j \|L_j \phi\|_2 \leq C \|(1 + Q(P(L))^k) \phi\|_2 \qquad\text{for $\phi \in \D(G)$,}\]
which means, by the spectral theorem, that
\[\max_j |\lambda_j| \leq C(1 + Q(P(\lambda))^k) \qquad\text{for $\lambda \in \Sigma$,}\]
since $\Sigma$ is the joint spectrum of $L_1,\dots,L_n$.

Now, if $K \subseteq \R^{n'}$ is compact, then by continuity there exists $M > 0$ such that $Q|_K \leq M$, but then
\[\max_j |\lambda_j| \leq C(1 + M^k) \qquad\text{for $\lambda \in \Sigma \cap P^{-1}(K)$,}\]
thus $P^{-1}(K) \cap \Sigma$ is bounded in $\R^n$, and also closed (by continuity of $P$), therefore $P^{-1}(K)$ is compact.
\end{proof}

\begin{prp}\label{prp:pushforward}
For every bounded Borel $m : \R^{n'} \to \C$, we have:
\[m(L') = (m \circ P)(L), \qquad \Kern_{L'} m = \Kern_{L} (m \circ P).\]
Moreover
\[\sigma' = P(\sigma), \qquad \Sigma' = P(\Sigma).\]
\end{prp}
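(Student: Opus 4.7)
The whole proposition reduces to the identification of the joint spectral resolution $E'$ of $L_1',\dots,L_{n'}'$ with the pushforward $P_*E$ of the joint spectral resolution $E$ of $L_1,\dots,L_n$ along the polynomial map $P$. Once this is established, all remaining claims follow by standard change-of-variable manipulations in the spectral calculus together with Lemma~\ref{lem:properwsub}.

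First I would check that $P_*E$, which is automatically a projection-valued measure on $\R^{n'}$ since $P$ is Borel, coincides with $E'$. By the change of variables formula for spectral integrals,
\[
\int_{\R^{n'}} \lambda_j \,d(P_*E)(\lambda) \;=\; \int_{\R^n} P_j(\mu)\,dE(\mu) \;=\; P_j(L_1,\dots,L_n) \;=\; L_j'
\]
for $j=1,\dots,n'$, using Corollary~\ref{cor:commutativealgebra} for the second equality. Since the unital algebra generated by $L_1',\dots,L_{n'}'$ is contained in the one generated by $L_1,\dots,L_n$, it still contains the same weighted subcoercive operator, so Proposition~\ref{prp:commutativealgebra} applies to $L_1',\dots,L_{n'}'$ as well; their joint spectral resolution is therefore unique, and one concludes $E' = P_*E$.

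Then, for any bounded Borel $m : \R^{n'} \to \C$, a further change of variables gives
\[
m(L') \;=\; \int_{\R^{n'}} m \,dE' \;=\; \int_{\R^{n'}} m \,d(P_*E) \;=\; \int_{\R^n} (m\circ P) \,dE \;=\; (m\circ P)(L),
\]
and the equality $\Kern_{L'} m = \Kern_L(m\circ P)$ is then an immediate consequence of the definition of the kernel transform. Specializing to $m = \chr_A$ for a Borel set $A \subseteq \R^{n'}$ and invoking Theorem~\ref{thm:plancherel},
\[
\sigma'(A) \;=\; \|\chr_A(L')\|_{\hat 2\to\infty}^2 \;=\; \|\chr_{P^{-1}(A)}(L)\|_{\hat 2\to\infty}^2 \;=\; \sigma(P^{-1}(A)) \;=\; (P_*\sigma)(A),
\]
which yields $\sigma' = P(\sigma)$.

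Finally, for the supports, Lemma~\ref{lem:properwsub} tells us that $P|_\Sigma$ is a proper continuous map, hence closed, so $P(\Sigma)$ is closed in $\R^{n'}$. If $\lambda \in P(\Sigma)$ and $U$ is any open neighborhood of $\lambda$, then $P^{-1}(U)$ is an open set meeting $\Sigma = \supp\sigma$, so $\sigma'(U) = \sigma(P^{-1}(U)) > 0$, giving $\lambda \in \supp\sigma' = \Sigma'$. Conversely, if $\lambda \notin P(\Sigma)$, choose an open $U \ni \lambda$ with $U \cap P(\Sigma) = \emptyset$; then $P^{-1}(U) \cap \Sigma = \emptyset$ and $\sigma'(U) = 0$, so $\lambda \notin \Sigma'$. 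Hence $\Sigma' = P(\Sigma)$. The only delicate point in the argument is making sure the uniqueness of the joint spectral resolution applies to $L_1',\dots,L_{n'}'$, which is exactly why Lemma~\ref{lem:properwsub} was stated for general polynomials $P_j$ only under the assumption that $L_1',\dots,L_{n'}'$ again form a weighted subcoercive system; everything else is routine.
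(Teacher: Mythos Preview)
Your proof is correct and follows essentially the same route as the paper: identify $E'$ with the pushforward $P_*E$ via the spectral theorem and uniqueness, deduce $m(L')=(m\circ P)(L)$ and the kernel identity, obtain $\sigma'=P(\sigma)$ from Theorem~\ref{thm:plancherel}, and use Lemma~\ref{lem:properwsub} for $\Sigma'=P(\Sigma)$. The only real difference is cosmetic: for the support equality, the paper handles $P(\Sigma)\subseteq\Sigma'$ by a measure-null-set argument and the reverse inclusion by a decreasing-compacta argument exploiting properness directly, whereas you first observe that properness makes $P(\Sigma)$ closed and then run a symmetric open-set argument; both are fine.

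One exposition point worth fixing: your sentence ``Since the unital algebra generated by $L_1',\dots,L_{n'}'$ is contained in the one generated by $L_1,\dots,L_n$, it still contains the same weighted subcoercive operator'' is not a valid inference --- a subalgebra need not contain any given element of the larger algebra. What actually guarantees that Corollary~\ref{cor:commutativealgebra} applies to $L_1',\dots,L_{n'}'$ (and hence that their joint spectral resolution is unique) is simply the standing hypothesis of \S\ref{section:automorphisms} that $L_1',\dots,L_{n'}'$ is itself a weighted subcoercive system. You acknowledge this hypothesis in your closing remark, so just drop the erroneous justification and cite the assumption directly.
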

\begin{proof}
The first part of the conclusion follows immediately from the spectral theorem and uniqueness of the convolution kernel. From this, the identity $\sigma' = P(\sigma)$ is easily inferred by Theorem~\ref{thm:plancherel}. In particular,
\[\sigma(\R^n \setminus P^{-1}(\Sigma')) = \sigma'(\R^{n'} \setminus \Sigma') = 0,\]
i.e., by continuity of $P$, $P(\Sigma) \subseteq \Sigma'$.

In order to prove the opposite inclusion, we use the fact that $P|_\Sigma$ is proper (see Lemma~\ref{lem:properwsub}). Take $\lambda' \in \Sigma'$, and let $B_k$ be a decreasing sequence of compact neighborhoods of $\lambda'$ in $\R^{n'}$ such that $\bigcap_k B_k = \{\lambda'\}$. By definition of support, we then have $\sigma(P^{-1}(B_k)) = \sigma'(B_k) \neq 0$, therefore $P^{-1}(B_k) \cap \Sigma \neq \emptyset$ for all $k$. Since $P|_\Sigma$ is proper, we have a decreasing sequence $P^{-1}(B_k) \cap \Sigma$ of non-empty compacta of $\R^n$, which therefore has a non-empty intersection. If $\lambda$ belongs to this intersection, then clearly $\lambda \in \Sigma$ and moreover $P(\lambda) \in B_k$ for all $k$, that it, $P(\lambda) = \lambda'$.
\end{proof}

A particularly interesting case is when $L_1',\dots,L_{n'}'$ generate the same subalgebra of $\Diff(G)$ as $L_1,\dots,L_n$. In this case, there exists also a polynomial map $Q = (Q_1,\dots,Q_n) : \R^{n'} \to \R^n$ such that
\[L_1 = Q_1(L'), \quad\dots, \quad L_n = Q_n(L').\]
Notice that in general $P$ and $Q$ are not the inverse one of the other: from the spectral theorem, we only deduce that $(Q \circ P)|_\Sigma = \id_\Sigma$, $(P \circ Q)|_{\Sigma'} = \id_{\Sigma'}$ (in fact, these identities extend to the Zariski-closures of $\Sigma$ and $\Sigma'$). In particular,
\[P|_\Sigma : \Sigma \to \Sigma', \qquad Q|_{\Sigma'} : \Sigma' \to \Sigma\]
are homeomorphisms.

Another way of producing new weighted subcoercive systems from a given one is via the action of automorphisms of $G$. Namely, if $k \in \Aut(G)$, then its derivative $k'$ is an automorphism of $\lie{g}$, therefore it extends to a unique filtered $*$-algebra automorphism of $\Diff(G) \cong \UEnA(\lie{g})$ (which shall be still denoted by $k'$), and clearly
\begin{equation}\label{eq:newwsubsystem}
k'(L_1),\dots,k'(L_n)
\end{equation}
is a weighted subcoercive system on $G$. Notice that, for every $k \in \Aut(G)$, the push-forward via $k$ of the right Haar measure $\mu$ on $G$ is a multiple of $\mu$, and in fact there is a Lie group homomorphism $c : \Aut(G) \to \R^+$ such that
\[k(\mu) = c(k) \mu.\]
In particular, if we set
\[T_k f = f \circ k^{-1}\]
for $k \in \Aut(G)$, then the properties of the spectral integral and those of convolution give immediately

\begin{prp}\label{prp:automorphismskernel}
For $k \in \Aut(G)$, $T_k$ is a multiple of an isometry of $L^2(G)$; more precisely
\[\|T_k f\|_2^2 = c(k)^{-1} \|f\|_2^2.\]
Moreover, for all $D \in \Diff(G)$,
\[k'(D) = T_k D T_k^{-1}.\]
In particular, for every bounded Borel $m : \R^n \to \C$,
\[m(k'(L_1),\dots,k'(L_n)) = T_k m(L_1,\dots,L_n) T_k^{-1},\]
and consequently
\[\Kern_{k'(L)} m = c(k) T_k \Kern_L m.\]
\end{prp}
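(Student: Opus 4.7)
The plan is to prove the four assertions in order, each relying on the previous ones. For the first, I would use the defining property $k(\mu) = c(k)\mu$, equivalently $\int f \circ k\,d\mu = c(k)\int f\,d\mu$; applied to $|f \circ k^{-1}|^2$ this yields $\|T_k f\|_2^2 = c(k)^{-1}\|f\|_2^2$, so that $c(k)^{1/2} T_k$ is a unitary operator on $L^2(G)$ and $T_k^{-1} = T_{k^{-1}}$.

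For the second assertion, both $D \mapsto k'(D)$ and $D \mapsto T_k D T_k^{-1}$ are algebra endomorphisms of $\Diff(G) \cong \UEnA(\lie{g})$, so it suffices to check equality on a single left-invariant vector field $X$. Using $k(\exp(tX)) = \exp(tk'(X))$ and the group-automorphism property of $k$, for $f \in \E(G)$ and $x \in G$ one computes
\[(T_k X T_k^{-1} f)(x) = \frac{d}{dt}\bigg|_{t=0} f\bigl(k(k^{-1}(x)\exp(tX))\bigr) = \frac{d}{dt}\bigg|_{t=0} f(x\exp(tk'(X))) = (k'(X)f)(x).\]
Applying this identity polynomial by polynomial gives $q(k'(L_1),\dots,k'(L_n)) = T_k\, q(L_1,\dots,L_n)\, T_k^{-1}$ for every $q \in \C[X_1,\dots,X_n]$. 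Since $T_k$ is a scalar multiple of a unitary, conjugation by $T_k$ preserves self-adjointness and strong commutativity; hence $T_k E T_k^{-1}$ is a joint spectral resolution for $k'(L_1),\dots,k'(L_n)$. By the uniqueness supplied by Corollary~\ref{cor:commutativealgebra}, this projection-valued measure must be $E_{k'(L)}$, and the third assertion follows from Borel functional calculus.

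For the fourth, I would first establish the convolution twist $T_k(f * g) = c(k)\,(T_k f) * (T_k g)$ by a change of variables combining the automorphism property of $k$ with the scaling of $\mu$; then, for $u \in \D(G)$, Part three gives
\[m(k'(L))u = T_k\bigl((T_k^{-1}u) * \Kern_L m\bigr) = c(k)\,u * (T_k \Kern_L m),\]
so that $\Kern_{k'(L)}m = c(k)\, T_k \Kern_L m$ by uniqueness of the convolution kernel. The main care required is bookkeeping the factors $c(k)^{\pm 1}$ between the $L^2$-scaling and the convolution identity; the only mildly delicate conceptual point is the legitimacy of transporting the joint spectral resolution through the non-isometric operator $T_k$, which rests precisely on $T_k$ being a scalar multiple of a unitary as established in Part one.
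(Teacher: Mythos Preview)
Your proposal is correct and follows exactly the route the paper gestures at (``the properties of the spectral integral and those of convolution give immediately''), filling in the details the paper omits. One small remark: the uniqueness of the joint spectral resolution you invoke is not literally stated in Corollary~\ref{cor:commutativealgebra} (which only asserts existence), but it is a standard consequence of the spectral theorem for strongly commuting self-adjoint operators, so the argument goes through as written.
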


Let $\mathcal{O}$ be the unital subalgebra of $\Diff(G)$ generated by $L_1,\dots,L_n$. For any automorphism $k \in \Aut(G)$, we say that $\mathcal{O}$ is $k$-invariant if $k(\mathcal{O}) \subseteq \mathcal{O}$, or equivalently, if $k(\mathcal{O}) = \mathcal{O}$ (the equivalence is due to the fact that $k'$ is an injective linear map preserving the filtration of $\Diff(G)$, which is made of finitely dimensional subspaces).

Let $\Aut(G;\mathcal{O})$ denote the (closed) subgroup of $\Aut(G)$ made of the automorphisms $k$ such that $\mathcal{O}$ is $k$-invariant. If $k \in \Aut(G;\mathcal{O})$, then \eqref{eq:newwsubsystem} must be a system of generators of $\mathcal{O}$; therefore, we can choose a polynomial map $P_k = (P_{k,1},\dots,P_{k,n}) : \R^n \to \R^n$ such that $k'(L_j) = P_{k,j}(L)$. Hence, by putting together Propositions~\ref{prp:pushforward} and \ref{prp:automorphismskernel}, we get

\begin{cor}\label{cor:automorphismskernel}
If $k \in \Aut(G;\mathcal{O})$, then, for every bounded Borel $m : \R^n \to \C$,
\[(m \circ P_k)(L_1,\dots,L_n) = T_k m(L_1,\dots,L_n) T_k^{-1}\]
and
\[\Kern_L (m \circ P_k) = c(k) T_k \Kern_L m.\]
Moreover,
\[P_k(\sigma) = c(k) \sigma, \qquad P_k(\Sigma) = \Sigma.\]
\end{cor}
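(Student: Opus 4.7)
The proof is essentially a bookkeeping exercise combining the two immediately preceding propositions, applied to the new weighted subcoercive system $L'_j = k'(L_j) = P_{k,j}(L)$. First I would verify that $L'_1,\dots,L'_{n'}$ (with $n' = n$) is indeed a weighted subcoercive system on $G$: this is the content of the observation \eqref{eq:newwsubsystem}, since $k'$ is a $*$-algebra automorphism of $\Diff(G)$, so the image of any weighted subcoercive operator in $\mathcal{O}$ under $k'$ is again weighted subcoercive and lies in the algebra generated by $L'_1,\dots,L'_n$.

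With the system $L'$ in hand, the first two displayed identities are obtained by reading the same operator in two ways. On one hand, Proposition~\ref{prp:pushforward} applied with the polynomial map $P_k : \R^n \to \R^n$ gives $m(L') = (m \circ P_k)(L)$ and $\Kern_{L'} m = \Kern_L(m \circ P_k)$ for every bounded Borel $m$. On the other hand, Proposition~\ref{prp:automorphismskernel} computes the same objects in terms of $T_k$ as $m(L') = T_k \, m(L) \, T_k^{-1}$ and $\Kern_{L'} m = c(k) \, T_k \Kern_L m$. Equating the two expressions yields the first two formulas of the corollary.

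For the statement about the Plancherel measure I would pass through Theorem~\ref{thm:plancherel}. Letting $\sigma'$ denote the Plancherel measure of $L'$, Proposition~\ref{prp:pushforward} already gives $\sigma' = P_k(\sigma)$ and $\Sigma' = P_k(\Sigma)$. To compare $\sigma'$ with $\sigma$, for a relatively compact Borel $A \subseteq \R^n$ I compute
\[
\sigma'(A) = \|\Kern_{L'} \chr_A\|_2^2 = \|c(k) T_k \Kern_L \chr_A\|_2^2 = c(k)^2 \, c(k)^{-1} \|\Kern_L \chr_A\|_2^2 = c(k) \, \sigma(A),
\]
using Proposition~\ref{prp:automorphismskernel} for the scaling $\|T_k f\|_2^2 = c(k)^{-1} \|f\|_2^2$. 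By regularity this gives $\sigma' = c(k)\sigma$ on all Borel sets, hence $P_k(\sigma) = c(k)\sigma$, and comparing supports (and noting $c(k)>0$) we obtain $P_k(\Sigma) = \Sigma' = \Sigma$.

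There is no genuine obstacle here, only a couple of minor things to be careful with: one should check that the polynomial $P_k$ is uniquely determined modulo the ideal vanishing on $\Sigma$ (so that all the formulas are well-posed on the relevant spectra), and one should keep track of the scalar $c(k)$ versus $c(k)^{-1}$ arising from $T_k$ not being a unitary but a multiple of one. Both are straightforward from the statements already proved.
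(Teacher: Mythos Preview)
Your proof is correct and follows exactly the route the paper indicates: the corollary is stated there without proof, merely as the result of ``putting together Propositions~\ref{prp:pushforward} and \ref{prp:automorphismskernel}'', and you have written out precisely that combination, including the small extra computation with Theorem~\ref{thm:plancherel} needed to identify $\sigma'$ with $c(k)\sigma$.
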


In particular, the restrictions $P_k|_\Sigma$ (which are univocally determined by $k$) define an action of the group $\Aut(G;\mathcal{O})$ on the spectrum $\Sigma$ by homeomorphisms; more precisely

\begin{prp}
The map
\begin{equation}\label{eq:automorphismsaction}
\Aut(G;\mathcal{O}) \times \Sigma \ni (k,\lambda) \mapsto P_{k^{-1}}(\lambda) \in \Sigma
\end{equation}
is continuous, and defines a continuous (left) action of $\Aut(G;\mathcal{O})$ on $\Sigma$.
\end{prp}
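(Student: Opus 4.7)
The plan splits naturally into two parts: (a) verifying that \eqref{eq:automorphismsaction} satisfies the left-action axioms, and (b) establishing joint continuity. Part (a) is algebraic and amounts to bookkeeping on top of the fact that $k \mapsto k'$ is a group homomorphism into $\Aut(\Diff(G))$; part (b) is where the real difficulty lies, because $P_k$ is only determined by $k$ modulo the ideal of polynomial relations among $L_1, \dots, L_n$, so there is no canonical continuous lift $k \mapsto P_k$ into the space of polynomials.

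For (a), I will compute, for $k_1, k_2 \in \Aut(G; \mathcal{O})$,
\[ (k_1 k_2)'(L_j) = k_1'(k_2'(L_j)) = k_1'(P_{k_2, j}(L)) = P_{k_2, j}(k_1'(L)) = (P_{k_2} \circ P_{k_1})_j(L), \]
and then appeal to Corollary~\ref{cor:commutativealgebra} to deduce $P_{k_1 k_2}|_\Sigma = P_{k_2}|_\Sigma \circ P_{k_1}|_\Sigma$, using that two polynomials producing the same operator at $L$ must agree on the joint spectrum $\Sigma$. Together with the obvious $P_e|_\Sigma = \id_\Sigma$ and with $(k_1 k_2)^{-1} = k_2^{-1} k_1^{-1}$, this gives $(k_1 k_2) \cdot \lambda = k_1 \cdot (k_2 \cdot \lambda)$ and $e \cdot \lambda = \lambda$.

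For (b), my plan is to produce continuous polynomial representatives by a linear-section argument on each finite-dimensional filtered piece. Let $d_j$ be the order of $L_j$ and let $V_j$ be the finite-dimensional space of left-invariant operators on $G$ of order at most $d_j$. Since $k \mapsto k'$ is continuous $\Aut(G) \to \Aut(\lie{g})$ and its extension to $\UEnA(\lie{g}) \cong \Diff(G)$ respects the natural filtration, the assignment $k \mapsto k'(L_j)$ is continuous with values in the finite-dimensional subspace $\mathcal{O}_j := \mathcal{O} \cap V_j$ for $k \in \Aut(G; \mathcal{O})$. I will then fix, once and for all, a linear section $s_j : \mathcal{O}_j \to \C[\lambda_1, \dots, \lambda_n]$ of the surjective evaluation map $P \mapsto P(L)$; any such $s_j$ has finite-dimensional image and is automatically continuous. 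Setting $\tilde P_{k, j} := s_j(k'(L_j))$ yields continuous polynomial representatives into a fixed finite-dimensional space of polynomials, and $\tilde P_{k, j}|_\Sigma = P_{k, j}|_\Sigma$ because $\tilde P_{k,j}(L) = k'(L_j) = P_{k,j}(L)$.

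To conclude, I combine three continuity facts: continuity of inversion in the topological group $\Aut(G; \mathcal{O})$; continuity of $k \mapsto \tilde P_{k, j}$ into the fixed finite-dimensional space $s_j(\mathcal{O}_j)$; and joint continuity of the evaluation map $(P, \lambda) \mapsto P(\lambda)$ on a finite-dimensional polynomial space times $\R^n$. The main obstacle is the non-uniqueness of $P_k$, resolved by the section $s_j$; once the continuous polynomial representatives are in hand, everything else reduces to routine continuity arguments in finite-dimensional spaces.
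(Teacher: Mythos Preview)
Your argument is correct, but it differs substantially from the paper's. The paper avoids choosing polynomial representatives altogether: it identifies $\Sigma$ with the Gelfand spectrum of the C$^*$-algebra $C_0(L) \subseteq \Cv^2(G)$, so that $\lambda$ corresponds to the character $\psi_\lambda(\breve m) = m(\lambda)$, and then reads off from Corollary~\ref{cor:automorphismskernel} the identity $\psi_{P_k(\lambda)} = c(k)\,\psi_\lambda \circ T_k$. This immediately gives the action axioms, and continuity is obtained by noting that $c(k)T_k$ is an isometry of $\Cv^2(G)$ depending strongly continuously on $k$ (using density of $C_0(L)\cap L^1(G)$), that the Gelfand topology is weak-$*$, and that the characters have uniformly bounded norm. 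Your approach is more hands-on: the linear-section trick produces continuous polynomial lifts $\tilde P_{k,j}$ into a fixed finite-dimensional space, reducing everything to elementary continuity in finite dimensions. What the paper's route buys is a conceptual link between the action on $\Sigma$ and the natural action of $\Aut(G)$ on convolution kernels, which is useful later (e.g.\ in \S\ref{subsection:homogeneity}); what your route buys is independence from the C$^*$-algebraic identification and a self-contained argument that stays within $\Diff(G)$ and polynomial algebra.
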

\begin{proof}
Recall that $\Sigma$ may be identified, as a topological space, with the Gelfand spectrum of the sub-C$^*$-algebra $C_0(L)$ of $\Cv^2(G)$, where $\lambda \in \Sigma$ corresponds to the multiplicative linear functional $\psi_\lambda$ defined by $\psi_\lambda(\breve m) = m(\lambda)$. By Corollary~\ref{cor:automorphismskernel} we then deduce
\[\psi_{P_k(\lambda)} = c(k) \psi_\lambda \circ T_k,\]
which clarifies that \eqref{eq:automorphismsaction} defines a left action on $\Sigma$. Moreover, since $C_0(L) \cap L^1(G)$ is dense in $C_0(L)$ (see Proposition~\ref{prp:Jdensity}), and since $c(k)T_k$ is an isometry of $\Cv^2(G)$, we obtain easily that $k \mapsto c(k) T_k u$ is continuous for every $u \in \Cv^2(G)$. Therefore, since the topology of the Gelfand spectrum is induced by the weak-$*$ topology, we immediately obtain that \eqref{eq:automorphismsaction} is separately continuous, and also jointly continuous since the $\psi_\lambda$ have uniformly bounded norms.
\end{proof}

In conclusion, the richer the group $\Aut(G;\mathcal{O})$ is, the more we may deduce about the structure of the spectrum $\Sigma$ and the Plancherel measure $\sigma$. An example of this fact is illustrated in \S\ref{subsection:homogeneity}.

\section{Spectrum and eigenfunctions}\label{section:eigenfunctions}
Let $L_1,\dots,L_n$ be a weighted subcoercive system on a connected Lie group $G$. We keep the notation of \S\ref{subsection:plancherel}. Notice that every $m \in \JJ_L$ is real analytic and admits a unique holomorphic extension to $\C^n$, which we still denote by $m$.

\begin{prp}\label{prp:weakstrongeigenfunctions}
Let $\phi \in \D'(G)$ be such that, for some $\lambda = (\lambda_1,\dots,\lambda_n) \in \C^n$,
\[L_j \phi = \lambda_j \phi \qquad\text{for $j=1,\dots,n$}\]
in the sense of distributions. Then $\phi \in \E(G)$, and the previous equalities hold in the strong sense. Moreover, if $\phi \in L^\infty(G)$, then, for every $m \in \JJ_L$,
\begin{equation}\label{eq:eigenconvolution}
\phi * \breve m = m(\lambda) \phi \qquad\text{and}\qquad \langle \breve m, \phi \rangle = m(\lambda) \, \overline{\phi(e)}.
\end{equation}
\end{prp}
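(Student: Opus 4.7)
\medskip\noindent\textbf{Proof plan.}

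\emph{Smoothness of $\phi$.} The natural way to upgrade $\phi$ from distribution to smooth function is to invoke the hypoellipticity already established in Corollary~\ref{cor:hypoelliptic}. Pick any formally self-adjoint weighted subcoercive operator $L_0 = Q(L_1,\dots,L_n)$ in the algebra generated by the $L_j$ (e.g.\ $L_0 = p_*(L)$ with $p_*$ as in \S\ref{subsection:plancherel}); then $L_0 \phi = Q(\lambda) \phi$ in $\D'(G)$, so that the tensor-product distribution $\Psi = e^{-Q(\lambda)t} \otimes \phi \in \D'(\R \times G)$ satisfies $(\partial_t + L_0)\Psi = 0$. Since $\partial_t + L_0$ is hypoelliptic on $\R \times G$, we conclude $\Psi \in \E(\R \times G)$, whence $\phi = \Psi(0,\cdot) \in \E(G)$, and the strong version of the eigenequations is then immediate.

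\emph{Reduction and main identity.} For the $L^\infty$ clause, linearity in $m$ together with the multiplicativity $\breve{mm'} = \breve{m'} * \breve m$ (which follows from Lemma~\ref{lem:composition}(b) applied to $\breve{m'} \in L^2(G) \cap L^1(G)$) reduces the task to verifying $\phi * \breve m = m(\lambda)\phi$ on each generator $m = e^{-q}$ of $\JJ_L$: indeed, $\phi * \breve{mm'} = (\phi * \breve{m'}) * \breve m = m'(\lambda)(\phi * \breve m) = (mm')(\lambda)\phi$. For such a generator, Proposition~\ref{prp:Jl1} identifies $\breve m$ with the time-$1$ value of the heat kernel $k_t$ of $q(L)$, and I would prove the stronger statement
\[
\phi * k_t \;=\; e^{-t q(\lambda)}\,\phi \qquad (t > 0).
\]
Both sides are smooth in $(t,x)$ and solve the heat equation $\partial_t u + q(L) u = 0$ on $(0,\infty) \times G$: for $\phi * k_t$, via $\partial_t k_t = -q(L) k_t$ together with the identity $D(\phi * f) = \phi * Df$ for left-invariant $D$ (which is the specialization of $\pi(Df) = d\pi(D)\pi(f)$ to the right regular representation); for $e^{-tq(\lambda)}\phi$, directly from $q(L)\phi = q(\lambda)\phi$. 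By Corollary~\ref{cor:kernelapproximateidentity} and the continuity of $\phi$, both sides converge to $\phi$ locally uniformly as $t \to 0^+$.

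\emph{Equality via hypoellipticity, and the second identity.} Setting $u(t,x) = \phi * k_t(x) - e^{-tq(\lambda)}\phi(x)$ and extending by $0$ to $\{t \leq 0\} \times G$, the extension is continuous on $\R \times G$; an integration by parts across $t = 0$, with no boundary contribution precisely because $u(0^+,\cdot) \equiv 0$, shows that $u$ solves $(\partial_t + q(L)) u = 0$ in $\D'(\R \times G)$, and a second invocation of Corollary~\ref{cor:hypoelliptic} gives $u \in \E(\R \times G)$, with $u$ vanishing to infinite order on $\{t \leq 0\} \times G$. The main obstacle will be to conclude $u \equiv 0$ on $\{t > 0\}$: the two summands of $u$ are uniformly bounded in $x$ on every finite time interval (by Theorem~\ref{thm:robinsonterelst}(f)), and some form of uniqueness for bounded solutions of the parabolic equation is required. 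I would either import such a uniqueness theorem from the classical theory, or sidestep it by a smoothing argument: $\psi_\varepsilon = \chi_\varepsilon * \phi$ with a mollifier $\chi_\varepsilon \in \D(G)$ still satisfies the eigenvalue equations by left-invariance ($L_j \psi_\varepsilon = \chi_\varepsilon * L_j\phi = \lambda_j \psi_\varepsilon$), but now lies in a Banach space on which the semigroup generated by $q(L)$ acts in a strongly continuous manner, so that the spectral calculus on eigenvectors immediately gives $\psi_\varepsilon * k_t = e^{-tq(\lambda)}\psi_\varepsilon$; letting $\varepsilon \to 0^+$ transfers the identity to $\phi$. Finally, the identity $\langle \breve m, \phi \rangle = m(\lambda)\overline{\phi(e)}$ is obtained either by an analogous ODE argument---setting $I(t) = \int_G k_t \overline{\phi}\,d\mu$, differentiating under the integral and integrating by parts via $q(L)^+ = q(L)$ plus $q(L)\overline{\phi} = \overline{q(\lambda)}\,\overline{\phi}$ gives $I'(t) = -\overline{q(\lambda)}I(t)$, with initial datum $I(0^+) = \overline{\phi(e)}$ from Corollary~\ref{cor:kernelapproximateidentity}---or, equivalently, by evaluating the first identity at $x = e$ and converting the resulting integral using the self-adjointness $k_t^* = k_t$ of the heat kernel of the formally self-adjoint operator $q(L)$.
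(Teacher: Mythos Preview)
Your plan is workable but takes a considerably longer route than the paper, and the detour introduces a real obstacle that you yourself flag.

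\emph{Smoothness.} Your space--time argument via $(\partial_t + L_0)\Psi = 0$ is correct, but the paper is more direct: since $p_*(L) - p_*(\lambda)$ has the same principal part as $p_*(L)$, it is again weighted subcoercive (Theorem~\ref{thm:robinsonterelst}), hence hypoelliptic by Corollary~\ref{cor:hypoelliptic}. One line, no tensor product needed.

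\emph{The convolution identity.} Here is where you and the paper diverge. You compute
\[
\partial_t(\phi * k_t) = \phi * (\partial_t k_t) = -\,\phi * (q(L)k_t) = -\,q(L)(\phi * k_t),
\]
obtaining a \emph{PDE} for $\phi * k_t$, and then face the uniqueness problem you describe. The paper instead writes $\phi * k_t(x)$ as a pairing of (a left translate of) $\phi$ against $k_t$, and in the step $\phi * (q(L)k_t)$ moves $q(L)$ across the pairing by formal self-adjointness (legitimate since $k_t$ and $q(L)k_t$ lie in $L^{1;\infty}$ while $\phi$ and $q(L)\phi = q(\lambda)\phi$ lie in $L^\infty$). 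Because every left translate of $\phi$ is still an eigenfunction, this yields the \emph{scalar} ODE
\[
\partial_t\bigl(\phi * k_t(x)\bigr) = -\,q(\lambda)\,\phi * k_t(x)
\]
for each fixed $x$, whose unique solution with initial datum $\phi(x)$ (from Corollary~\ref{cor:kernelapproximateidentity}) is $e^{-tq(\lambda)}\phi(x)$. No PDE uniqueness, no mollification.

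Your mollification fallback could be made to work, but the sentence ``lies in a Banach space on which the semigroup generated by $q(L)$ acts in a strongly continuous manner'' needs substance: $\chi_\varepsilon * \phi$ is still only in $L^\infty$, where the right regular representation is not strongly continuous. You would need to argue that $\chi_\varepsilon * \phi$ lands in the space of (say) right-uniformly continuous bounded functions and that Theorem~\ref{thm:robinsonterelst} applies there; this is true but is more apparatus than the one-line ODE above. Your separate ODE argument for $\langle \breve m, \phi\rangle$ is essentially the paper's idea applied at the single point $e$; the paper simply evaluates the first identity there.
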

\begin{proof}
From the hypothesis, we get immediately
\[p_*(L) \phi = p_*(\lambda) \phi.\]
Since $p_*(L) - p_*(\lambda)$ is hypoelliptic by Corollary~\ref{cor:hypoelliptic}, this implies that $\phi \in \E(G)$.

Suppose now that $\phi$ is bounded. Let $e^{-q}$ be one of the generators of $\JJ_L$, and set $k_t = \Kern_L(e^{-tq})$. Then, for every $x \in G$, also $\LA_x \phi$ is a joint eigenfunction of $L_1,\dots,L_n$ with eigenvalue $\lambda$; therefore, by Theorem~\ref{thm:robinsonterelst}(f,g), the function
\[t \mapsto \phi * k_t(x) = \langle \LA_x \phi, k_t \rangle\]
is smooth on $\left]0,+\infty\right[$, with derivative
\[t \mapsto \langle \LA_x \phi, -q(L) k_t \rangle = - q(\lambda) \, \phi * k_t(x).\]
Hence we get
\[\phi * k_t = e^{-t q(\lambda)} \phi,\]
since $k_t$ is an approximate identity for $t \to 0^+$ (see Corollary~\ref{cor:kernelapproximateidentity}). This gives the former identity of \eqref{eq:eigenconvolution} when $m$ is a generator of $\JJ_L$, and consequently also for an arbitrary $m \in \JJ_L$; the latter identity follows by evaluating the former in $e$.
\end{proof}

The previous proposition shows that the joint eigenfunctions of $L_1,\dots,L_n$ are smooth, and are also eigenfunctions of the convolution operators with kernels in $\Kern_L(\JJ_L$). An analogous result holds in every unitary representation of $G$.

\begin{lem}\label{lem:eigenvectors}
Let $\pi$ be a unitary representation of $G$ on $\HH$. The following are equivalent for $v \in \HH \setminus \{0\}$:
\begin{itemize}
\item[(i)] $v \in \HH^\infty$ and $v$ is a joint eigenvector of $d\pi(L_1),\dots,d\pi(L_n)$;
\item[(ii)] $v$ is a joint eigenvector of the operators $\pi(\breve m)$ for $m \in \JJ_L$.
\end{itemize}
\end{lem}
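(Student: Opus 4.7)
The key bridge in both directions is Proposition~\ref{prp:Jl1}, which identifies $\pi(\breve m)$ with the operator $m(d\pi(L_1),\dots,d\pi(L_n))$ obtained from the joint spectral resolution $E_\pi$ supplied by Corollary~\ref{cor:commutativealgebra}. Both conditions will translate into a statement about the concentration of the spectral measure of $v$ (with respect to $E_\pi$) at a single point of $\R^n$.

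\textbf{(i) $\Rightarrow$ (ii).} Assume $v \in \HH^\infty$ satisfies $d\pi(L_j) v = \lambda_j v$ for $j=1,\dots,n$. Because each $L_j$ is formally self-adjoint and $\pi$ is unitary, $d\pi(L_j)$ is symmetric on $\HH^\infty$, which forces $\lambda_j \in \R$. By Proposition~\ref{prp:commutativealgebra} the closures $\overline{d\pi(L_1)},\dots,\overline{d\pi(L_n)}$ commute strongly, so the joint spectral projection satisfies $E_\pi(\{\lambda\}) = \prod_{j} E_{\overline{d\pi(L_j)}}(\{\lambda_j\})$; applying each factor to $v$ gives $E_\pi(\{\lambda\})v = v$. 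Consequently, for every bounded Borel $m$ on $\R^n$, $m(d\pi(L))v = m(\lambda)v$, and specializing to $m \in \JJ_L$ and invoking Proposition~\ref{prp:Jl1} yields $\pi(\breve m) v = m(\lambda) v$.

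\textbf{(ii) $\Rightarrow$ (i).} Write $\pi(\breve m) v = \chi(m) v$ for a (multiplicative) functional $\chi : \JJ_L \to \C$. The first task is to exhibit some $m_0 \in \JJ_L$ with $\chi(m_0) \neq 0$. Setting $k_t = \Kern_L(e^{-tp_0})$, Corollary~\ref{cor:kernelapproximateidentity} gives that $(k_t)_{t>0}$ is an approximate identity, hence $\pi(k_t)v \to v$ in $\HH$; if $\chi$ vanished on all of $\JJ_L$ then $\pi(k_t)v = 0$ for every $t$, forcing the contradiction $v=0$. Fix $m_0$ with $\chi(m_0)\neq 0$; since $\breve m_0 \in L^{1;\infty}(G)$ by Proposition~\ref{prp:Jl1}, the identity $d\pi(D)\pi(\breve m_0) = \pi(D\breve m_0)$ (which is bounded on $\HH$ for every $D \in \Diff(G)$) shows that $\pi(\breve m_0) v \in \HH^\infty$, and therefore $v = \chi(m_0)^{-1}\pi(\breve m_0)v \in \HH^\infty$. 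To produce the eigenvalues, use Proposition~\ref{prp:Jl1} again to rewrite the hypothesis as $m(d\pi(L))v = \chi(m)v$ for all $m\in\JJ_L$; letting $\nu(B) = \langle E_\pi(B)v,v\rangle$, the functional calculus gives
\[
\int_{\R^n} |m - \chi(m)|^2 \, d\nu = \bigl\| \pi(\breve m) v - \chi(m) v \bigr\|^2 = 0
\]
for each $m \in \JJ_L$, so $m = \chi(m)$ holds $\nu$-a.e. Applying this to the generators $e^{-p_0}, e^{-p_1},\dots,e^{-p_n}$, which separate points of $\R^n$ (Lemma~\ref{lem:Jdensity}), shows $\nu$ is a point mass at some $\lambda^* \in \R^n$, i.e.\ $E_\pi(\{\lambda^*\})v = v$. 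Since $v \in \HH^\infty$ lies in the domain of $d\pi(L_j)$, one concludes $d\pi(L_j)v = \lambda_j^* v$.

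\textbf{Expected obstacle.} The spectral-theoretic calculations are routine once everything is set up. The only subtle step is the smoothness bootstrap in (ii) $\Rightarrow$ (i): one must first rule out that $\chi$ is identically zero (handled by the approximate-identity argument from Corollary~\ref{cor:kernelapproximateidentity}) and then leverage the $L^{1;\infty}$-regularity of the heat-type kernels from Theorem~\ref{thm:robinsonterelst}(d) to transfer smoothness from $\breve m_0$ to $v$ itself.
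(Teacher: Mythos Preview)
Your proof is correct, and for (i) $\Rightarrow$ (ii) it is essentially identical to the paper's (both invoke Proposition~\ref{prp:Jl1} and the spectral calculus). For (ii) $\Rightarrow$ (i), however, you take a somewhat different route. The paper works directly with the specific generators $m=e^{-p_j}$: since $e^{-p_j}>0$ everywhere, $\pi(\breve m)=e^{-p_j(d\pi(L))}$ has trivial kernel by spectral calculus, so the eigenvalue $c_j$ is automatically positive---no approximate-identity argument is needed. Smoothness of $v$ is then read off from Theorem~\ref{thm:robinsonterelst}(b) (the semigroup maps into $\HH^\infty$), and the eigenvalues $\lambda_j$ are recovered by taking logarithms and using $\lambda_j = p_j(\lambda)-p_0(\lambda)$. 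Your argument instead uses Corollary~\ref{cor:kernelapproximateidentity} to rule out $\chi\equiv 0$, the $L^{1;\infty}$-regularity of $\breve m_0$ (Proposition~\ref{prp:Jl1}) for the smoothness bootstrap, and a spectral-measure concentration argument (via point-separation from Lemma~\ref{lem:Jdensity}) to pin down the eigenvalue. The paper's route is shorter and more concrete; yours is more abstract but makes it transparent why the scalar spectral measure of $v$ must be a Dirac mass, which is a nice structural takeaway.
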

\begin{proof}
(i) $\Rightarrow$ (ii) follows immediately from Proposition~\ref{prp:Jl1} and the properties of the spectral integral. For the reverse implication, take $m = e^{-p_j}$ for $j=0,\dots,n$, so that $\pi(\breve m) = e^{-p_j(d\pi(L))}$ by Proposition~\ref{prp:Jl1}; by the properties of the spectral integral, $\ker \pi(\breve m) = \{0\}$, therefore $\pi(\breve m) v = c v$ for some $c > 0$. This implies that
\[v = c^{-1} \pi(\breve m) v \in \HH^\infty,\]
by Theorem~\ref{thm:robinsonterelst}(b), and moreover, again by the properties of the spectral integral,
\[p_j(d\pi(L)) v = (\log c) v,\]
that is, $v$ is an eigenvector of $p_j(d\pi(L))$ for $j=0,\dots,n$. Since
\[\lambda_j = p_j(\lambda) - p_0(\lambda) \qquad\text{for $j=1,\dots,n$,}\]
it follows that $v$ is a joint eigenvector of $d\pi(L_1),\dots,d\pi(L_n)$.
\end{proof}

The link between eigenfunctions on $G$ and eigenvectors in unitary representations is given by the \emph{joint eigenfunctions of positive type}. Recall that a function of positive type $\phi : G \to \C$ is a diagonal coefficient for some unitary representation $\pi$ of $G$ on a Hilbert space $\HH$, i.e.,
\begin{equation}\label{eq:positivetype}
\phi(x) = \langle \pi(x) v, v \rangle
\end{equation}
for some vector $v \in \HH$, which can be supposed to be cyclic for $\pi$; in that case, the representation $\pi$ is uniquely determined by $\phi$ up to equivalence (see \S3.3 of \cite{folland_course_1995} for details), and is said to be \emph{associated} with $\phi$.

\begin{prp}\label{prp:eigenfunctions}
For a function of positive type $\phi$ on $G$, the following are equivalent:
\begin{itemize}
\item[(i)] $\phi$ is a joint eigenfunction of $L_1,\dots,L_n$ and $\phi(e) = 1$;
\item[(ii)] $\phi$ has the form \eqref{eq:positivetype} for some unitary representation $\pi$ of $G$ on $\HH$ and some cyclic vector $v$ of norm $1$, where $v \in \HH^\infty$ is a joint eigenvector of $d\pi(L_1),\dots,d\pi(L_n)$;
\item[(iii)] $\phi \neq 0$ and, for all $m \in \JJ_L$ and $f \in L^1(G)$,
\[\langle \breve m * f, \phi \rangle = \langle f * \breve m, \phi \rangle = \langle f, \phi \rangle \langle \breve m, \phi \rangle;\]
\item[(iv)] $\phi \neq 0$ and, for all $m \in \JJ_L$, $\langle \breve m * \breve m^*, \phi \rangle = |\langle \breve m, \phi \rangle|^2$.
\end{itemize}
In this case, moreover, the eigenvalue of $L_j$ corresponding to $\phi$ is a real number and coincides with the eigenvalue of $d\pi(L_j)$ corresponding to $v$.
\end{prp}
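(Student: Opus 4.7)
My plan is to prove (i) $\Leftrightarrow$ (ii) directly, and then the cycle (ii) $\Rightarrow$ (iii) $\Rightarrow$ (iv) $\Rightarrow$ (ii). Throughout I will work with a GNS triple $(\pi, \HH, v)$ attached to $\phi$, so that $\phi(x) = \langle \pi(x) v, v\rangle$ with $v$ cyclic, $\phi(e) = \|v\|^2$, and $\langle f,\phi\rangle = \langle \pi(f) v, v\rangle$ for $f \in L^1(G)$; once smoothness of $v$ is established, Proposition~\ref{prp:Jl1} supplies $\pi(\breve m) = m(d\pi(L))$ for every $m \in \JJ_L$.

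For (i) $\Rightarrow$ (ii), I exploit that $\phi \in L^\infty$ is a joint eigenfunction, so Proposition~\ref{prp:weakstrongeigenfunctions} gives $\phi * \breve m = m(\lambda)\phi$; the key computation is
\[\langle \pi(\breve m) v, \pi(x) v\rangle = (\phi * \breve m)(x^{-1}) = m(\lambda)\,\overline{\phi(x)} = m(\lambda)\, \langle v, \pi(x) v\rangle,\]
which by cyclicity of $v$ forces $\pi(\breve m) v = m(\lambda) v$ for all $m \in \JJ_L$. Lemma~\ref{lem:eigenvectors} then promotes $v$ to a smooth joint eigenvector of $d\pi(L_1),\dots,d\pi(L_n)$; reconstructing $L_j\phi(x) = \langle \pi(x)\,d\pi(L_j) v, v\rangle$ and comparing with the hypothesis identifies the original $\lambda_j$ with the eigenvalues of $d\pi(L_j)$ on $v$, which are real by essential self-adjointness (Proposition~\ref{prp:commutativealgebra}). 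The converse (ii) $\Rightarrow$ (i) is the differentiation $L_j\phi(x) = \langle \pi(x) d\pi(L_j) v, v\rangle = \lambda_j \phi(x)$, together with $\phi(e) = \|v\|^2 = 1$.

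For (ii) $\Rightarrow$ (iii), I combine $\pi(\breve m) v = m(\lambda) v$ with $\pi(\breve m)^* v = \pi(\breve{\bar m}) v = \overline{m(\lambda)}\,v$ (using that $\lambda$ is real and that $\bar m \in \JJ_L$) and the identity $\pi(f * g) = \pi(g)\pi(f)$ to see that both $\langle \breve m * f, \phi\rangle$ and $\langle f * \breve m, \phi\rangle$ equal $m(\lambda)\,\langle f,\phi\rangle = \langle \breve m, \phi\rangle\,\langle f,\phi\rangle$. Then (iii) $\Rightarrow$ (iv) is immediate on specializing $f = \breve m^* \in L^1(G)$ and observing $\langle \breve m^*, \phi\rangle = \overline{\langle \breve m, \phi\rangle}$.

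The main obstacle is (iv) $\Rightarrow$ (ii), since (iv) is formally weaker than (iii). In the GNS picture, (iv) becomes $\|\pi(\breve m) v\|^2 = |\alpha_m|^2$, where $\alpha_m := \langle \pi(\breve m) v, v\rangle$. Applying (iv) to $m$, $n$, $m+n$ and $m+in$ (all in $\JJ_L$, a complex algebra) and polarizing will give
\[\langle \pi(\breve m) v, \pi(\breve n) v\rangle = \alpha_m\,\overline{\alpha_n}\qquad\text{for all } m,n \in \JJ_L,\]
a rank-one Gram condition that forces every $\pi(\breve m) v$ to be a scalar multiple of a single nonzero vector $u_0 := \pi(\breve{m_0}) v$, where $m_0 = e^{-p_0}$ (nonzero because $\pi(\breve{m_0}) = e^{-p_0(d\pi(L))}$ is injective). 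To pin the common line down to $\C v$, I take $m_\varepsilon = e^{-\varepsilon p_0} \in \JJ_L$: by the spectral theorem $\pi(\breve{m_\varepsilon}) v \to v$ as $\varepsilon \to 0^+$, so $v$ itself lies in $\C u_0$, and hence $\pi(\breve m) v \in \C v$ for every $m \in \JJ_L$. Writing $\pi(\breve{m_0}) v = c v$ with $c \neq 0$ and substituting into (iv) for $m_0$ yields $|c|^2 \|v\|^2 = |c|^2 \|v\|^4$, so $\|v\| = 1$; Lemma~\ref{lem:eigenvectors} then delivers (ii), and the ``moreover'' clause follows exactly as in (i) $\Rightarrow$ (ii).
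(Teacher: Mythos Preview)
Your proof is correct and follows the same cycle of implications as the paper, but two of the steps are argued differently.

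For (i) $\Rightarrow$ (ii), the paper works ``by hand'': it shows that every coefficient function $x \mapsto \langle \pi(x)v, w\rangle$ is smooth by approximating $w$ with linear combinations of the $\pi(y)v$ (whose coefficient functions $\LA_y\phi$ are eigenfunctions, hence smooth by Proposition~\ref{prp:weakstrongeigenfunctions}), and then reads off $d\pi(L_j)v = \lambda_j v$ by evaluating at $e$. Your route---use the convolution identity $\phi * \breve m = m(\lambda)\phi$ from Proposition~\ref{prp:weakstrongeigenfunctions} to get $\pi(\breve m)v = m(\lambda)v$ directly, then invoke Lemma~\ref{lem:eigenvectors}---is a legitimate shortcut that exploits machinery already in place.

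For (iv) $\Rightarrow$ (ii), the paper's argument is shorter than yours. It first takes $m = e^{-tp_*}$ and lets $t \to 0^+$ (approximate identity) in the identity $\|\pi(\breve m)v\| = |\langle \pi(\breve m)v, v\rangle|$ to get $\|v\| = \|v\|^2$, hence $\|v\| = 1$; then for arbitrary $m$ that same identity is exactly the equality case of Cauchy--Schwarz against the unit vector $v$, so $\pi(\breve m)v \in \C v$ immediately, and Lemma~\ref{lem:eigenvectors} finishes. Your polarization argument (deriving $\langle \pi(\breve m)v, \pi(\breve n)v\rangle = \alpha_m\overline{\alpha_n}$ and then using $e^{-\varepsilon p_0}$ to place $v$ on the common line) is correct but does more work than necessary: once $\|v\|=1$ is known, the rank-one Gram structure is equivalent to the Cauchy--Schwarz equality case and needs no separate polarization step.
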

\begin{proof}
(i) $\Rightarrow$ (ii). Since $\phi$ is of positive type and $\phi(e) = 1$, then $\phi$ is of the form \eqref{eq:positivetype} for some unitary representation $\pi$ of $G$ on $\HH$ and some cyclic vector $v$ of norm $1$. From (i) we have $L_j \phi = \lambda_j \phi$ for some $\lambda = (\lambda_1,\dots,\lambda_n) \in \C^n$. Being $L_1,\dots,L_n$ left-invariant, if
\[\phi_y(x) = \LA_y \phi(x) = \langle \pi(x) v, \pi(y) v \rangle,\]
then also $L_j \phi_y = \lambda_j \phi_y$. Since $v$ is cyclic, for all $w \in \HH$ we can find a sequence $(w_n)_n$ in $\Span\{\pi(y) v \tc y \in G\}$ such that $w_n \to w$ in $\HH$; if
\[\psi_n(x) = \langle \pi(x) v, w_n \rangle, \qquad \psi(x) = \langle \pi(x) v, w \rangle,\]
then the $\psi_n$ are linear combinations of the $\phi_y$, so that $L_j \psi_n = \lambda_j \psi_n$ and, passing to the limit, we also have $L_j \psi = \lambda_j \psi$ in the sense of distributions. But then $\psi \in \E(G)$ by Proposition~\ref{prp:weakstrongeigenfunctions}. Since $w \in \HH$ was arbitrary, we conclude that $v \in \HH^\infty$; moreover
\[\langle \lambda_j v, w \rangle = \lambda_j \psi(e) = L_j \psi(e) = \langle d\pi(L_j) v, w \rangle,\]
and again, from the arbitrariness of $w$, we get $d\pi(L_j) v = \lambda_j v$ for $j=1,\dots,n$. Finally, since $d\pi(L_j)$ is self-adjoint, we deduce that $\lambda_j \in \R$.

(ii) $\Rightarrow$ (i). Trivial.

(ii) $\Rightarrow$ (iii). If $m \in \JJ_L$, by Lemma~\ref{lem:eigenvectors}, $\pi(\breve m)^* v = \pi(\breve{\overline{m}}) v = c v$ for some $c \in \C$. Since $\|v\| = 1$, we have
\begin{multline*}
\langle f * \breve m, \phi \rangle = \langle \pi(f * \breve m) v, v \rangle = \langle \pi(\breve m) \pi(f) v, v \rangle = \overline{c} \langle \pi(f) v,v \rangle \\
= \langle \pi(f) v,v \rangle \langle \pi(\breve m) v, v \rangle = \langle f, \phi \rangle \langle \breve m, \phi \rangle.
\end{multline*}
The other identity is proved analogously.

(iii) $\Rightarrow$ (iv). Trivial.

(iv) $\Rightarrow$ (ii). Being of positive type, $\phi$ has the form \eqref{eq:positivetype} for some unitary representation $\pi$ of $G$ on $\HH$ and some cyclic vector $v$. Then (iv) can be equivalently rewritten as
\begin{equation}\label{eq:multiplicativenorm}
\| \pi(\breve m) v \| = |\langle \pi(\breve m) v, v \rangle|
\end{equation}
for all $m \in \JJ_L$. In particular, by taking $m = e^{-tp_*}$, which is an approximate identity for $t \to 0^+$ (see Corollary~\ref{cor:kernelapproximateidentity}), and passing to the limit, we obtain $\|v\| = \|v\|^2$, so that $\|v\| = 1$ (since $\phi \neq 0$). Now, for an arbitrary $m \in \JJ_L$, \eqref{eq:multiplicativenorm} implies that $\pi(\breve m) v$ cannot have a component orthogonal to $v$, thus $v$ is an eigenvector of $\pi(\breve m)$, and (ii) follows from Lemma~\ref{lem:eigenvectors}.
\end{proof}

Let $\PP_L$ be the set of the joint eigenfunctions $\phi$ of $L_1,\dots,L_n$ of positive type with $\phi(e) = 1$. For every $\phi \in \PP_L$, by Proposition~\ref{prp:eigenfunctions} the corresponding eigenvalue $\lambda$ is in $\R^n$; we then define $\evmap_L : \PP_L \to \R^n$ by setting $\evmap_L(\phi) = \lambda$.

\begin{lem}\label{lem:continuouseigenvalues}
If $\PP_L$ is endowed with the topology induced by the weak-$*$ topology of $L^\infty(G)$, then the map $\evmap_L : \PP_L \to \R^n$ is continuous.
\end{lem}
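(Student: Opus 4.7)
The plan is to exploit the identity $\langle \breve m, \phi \rangle = m(\evmap_L(\phi))$ that was established in Proposition~\ref{prp:weakstrongeigenfunctions}, combined with the fact that the kernels $\breve m$ for $m \in \JJ_L$ lie in $L^1(G)$ (Proposition~\ref{prp:Jl1}), so they can be paired against weak-$*$ convergent nets in $L^\infty(G)$.

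Concretely, suppose $(\phi_\alpha)_\alpha$ is a net in $\PP_L$ converging weak-$*$ to $\phi \in \PP_L$. Since $\phi_\alpha(e) = \phi(e) = 1$ by definition of $\PP_L$, Proposition~\ref{prp:weakstrongeigenfunctions} yields $\langle \breve m, \phi_\alpha \rangle = m(\evmap_L(\phi_\alpha))$ and $\langle \breve m, \phi \rangle = m(\evmap_L(\phi))$ for every $m \in \JJ_L$. Because $\breve m \in L^1(G)$, weak-$*$ convergence of $\phi_\alpha$ to $\phi$ gives
\[
m(\evmap_L(\phi_\alpha)) \longrightarrow m(\evmap_L(\phi)) \qquad \text{for all } m \in \JJ_L.
\]

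I would then specialize this to the distinguished generators $e^{-p_0}, e^{-p_1}, \dots, e^{-p_n}$ of $\JJ_L$ introduced before Lemma~\ref{lem:Jdensity}. Since each $p_k$ is real-valued on $\R^n$ and the exponential is injective on $\R$, the convergence above implies $p_k(\evmap_L(\phi_\alpha)) \to p_k(\evmap_L(\phi))$ for $k = 0, 1, \dots, n$. Using $p_k(\lambda) = p_0(\lambda) + \lambda_k$, we recover, componentwise,
\[
(\evmap_L(\phi_\alpha))_k = p_k(\evmap_L(\phi_\alpha)) - p_0(\evmap_L(\phi_\alpha)) \longrightarrow p_k(\evmap_L(\phi)) - p_0(\evmap_L(\phi)) = (\evmap_L(\phi))_k,
\]
so $\evmap_L(\phi_\alpha) \to \evmap_L(\phi)$ in $\R^n$. (Boundedness of the net $\evmap_L(\phi_\alpha)$ is automatic here, since $p_0 \to +\infty$ at infinity and $p_0(\evmap_L(\phi_\alpha))$ converges to a finite limit.)

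There is no real obstacle: the only thing to check is that the elementary formulas from Propositions~\ref{prp:Jl1} and~\ref{prp:weakstrongeigenfunctions} combine correctly with the specific choice of generators of $\JJ_L$. The weak-$*$ topology is used only through pairings with a handful of $L^1$ kernels, and the polynomials $p_0, \dots, p_n$ were engineered precisely so that their values determine $\lambda$.
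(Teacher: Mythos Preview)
Your proposal is correct and follows essentially the same approach as the paper's proof: both use the identity $\langle \Kern_L(e^{-p_j}), \phi \rangle = e^{-p_j(\evmap_L(\phi))}$ from Proposition~\ref{prp:weakstrongeigenfunctions} (with $\phi(e)=1$) to see that $\phi \mapsto e^{-p_j(\evmap_L(\phi))}$ is weak-$*$ continuous, and then exploit $p_j - p_0 = \lambda_j$ to recover each coordinate. The paper phrases this directly as continuity of the maps and takes the ratio $e^{-p_j}/e^{-p_0} = e^{-\lambda_j}$, whereas you argue via nets and subtract after taking logarithms, but the content is identical.
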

\begin{proof}
By Proposition~\ref{prp:weakstrongeigenfunctions}, for $j=0,\dots,n$, we have that
\[e^{-p_j(\evmap_L(\phi))} = \langle \Kern_L (e^{-p_j}), \phi \rangle,\]
which is continuous in $\phi$ with respect to the weak-$*$ topology of $L^\infty(G)$. In particular, if $\evmap_{L,j} : \PP_L \to \R$ is the $j$-th component of $\evmap_L$ for $j=1,\dots,n$, then
\[e^{-\evmap_{L,j}(\phi)} = e^{-p_j(\evmap_L(\phi))}/e^{-p_0(\evmap_L(\phi))};\]
therefore the components of $\evmap_L$ are continuous $\PP_L \to \R$.
\end{proof}

\begin{prp}\label{prp:eigentopologies}
The topologies on $\PP_L$ induced by the weak-$*$ topology of $L^\infty(G)$, the compact-open topology of $C(G)$ and the topology of $\E(G)$ coincide. Moreover, the map $\evmap_L : \PP_L \to \R^n$ is a continuous, proper and closed map. In particular, the image $\evmap_L(\PP_L)$ is a closed subset of $\R^n$ and its topology as a subspace of $\R^n$ coincides with the quotient topology induced by $\evmap_L$.
\end{prp}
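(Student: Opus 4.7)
Every $\phi \in \PP_L$ is bounded by $\phi(e) = 1$ (since $\phi(x) = \langle \pi(x)v,v\rangle$ with $\|v\|=1$), so $\PP_L$ lies in the closed unit ball $B$ of $L^\infty(G)$. The argument splits into three parts: (i) equality of the three topologies on $\PP_L$, (ii) continuity, properness and closedness of $\evmap_L$, and (iii) the quotient-topology identification.

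\emph{Step (i).} On functions uniformly bounded by $1$ the inclusions of topologies $\E(G) \supseteq$ compact-open $\supseteq$ weak-$*$ are immediate (the second uses the density of $C_c(G)$ in $L^1(G)$). To close the chain, assume $\phi_\alpha \to \phi$ weak-$*$ in $\PP_L$ and fix a generator $e^{-p_0}$ of $\JJ_L$ with kernel $k = \Kern_L(e^{-p_0}) \in L^{1;\infty}(G) \cap C_0^\infty(G)$. Proposition~\ref{prp:weakstrongeigenfunctions} gives $\psi = e^{p_0(\evmap_L(\psi))} \, \psi * k$ for every $\psi \in \PP_L$, so by Lemma~\ref{lem:continuouseigenvalues} it suffices to prove $\phi_\alpha * k \to \phi * k$ in $\E(G)$. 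A change of variables using the modular function rewrites, for each $D \in \Diff(G)$ and $y \in G$,
\[
D(\psi * k)(y) = \int_G \psi(w) \, h_{y,D}(w) \, d\mu(w), \qquad h_{y,D}(w) = (Dk)(w^{-1}y) \Delta(w)^{-1},
\]
where $h_{y,D} \in L^1(G)$ and $y \mapsto h_{y,D}$ is continuous into $L^1(G)$ (this uses $k \in L^{1;\infty}(G)$ and the standard continuity of translations in $L^1$). Weak-$*$ convergence yields pointwise convergence $D(\phi_\alpha * k)(y) \to D(\phi * k)(y)$; on a compact $K \subseteq G$ the set $\{h_{y,D} \tc y \in K\}$ is compact in $L^1(G)$, and a finite-net argument combined with the uniform bound $\|\phi_\alpha\|_\infty \leq 1$ upgrades pointwise to uniform convergence on $K$.

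\emph{Step (ii).} Continuity of $\evmap_L$ is Lemma~\ref{lem:continuouseigenvalues}. For properness, fix compact $K \subseteq \R^n$; since $\evmap_L^{-1}(K) \subseteq B$ and $B$ is weak-$*$ compact by Banach--Alaoglu, it suffices to show that $\evmap_L^{-1}(K)$ is weak-$*$ closed. If $\phi_\alpha \to \phi$ weak-$*$ with $\phi_\alpha \in \evmap_L^{-1}(K)$, both the positive-type condition $\langle f^* * f, \phi \rangle \geq 0$ for $f \in C_c(G)$ and the joint-eigenfunction identity $\langle \breve m * \breve m^*, \phi \rangle = |\langle \breve m, \phi \rangle|^2$ of Proposition~\ref{prp:eigenfunctions}(iv) pass to the weak-$*$ limit (for the latter, note that $\breve m, \breve m * \breve m^* \in L^1(G)$ by Proposition~\ref{prp:Jl1}). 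Hence $\phi$ is either zero or a joint eigenfunction of positive type with eigenvalue $\lambda = \lim \evmap_L(\phi_\alpha) \in K$. To exclude $\phi = 0$ and obtain $\phi(e) = 1$, observe that $\psi \mapsto (\psi * k)(e) = \int_G \psi(z^{-1}) k(z) \, d\mu(z)$ is weak-$*$ continuous (it is a pairing against a fixed $L^1$ function after a modular-function change of variables); since $(\phi_\alpha * k)(e) = e^{-p_0(\evmap_L(\phi_\alpha))} \geq \min_K e^{-p_0} > 0$, the limit $(\phi * k)(e)$ is strictly positive, and then $\phi * k = e^{-p_0(\lambda)} \phi$ evaluated at $e$ forces $\phi(e) = 1$. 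Closedness of $\evmap_L$ follows automatically from continuity and properness, since $\R^n$ is locally compact Hausdorff.

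\emph{Step (iii).} As $\evmap_L$ is a continuous closed surjection onto $\evmap_L(\PP_L)$, its image is closed in $\R^n$ and the subspace topology coincides with the quotient topology by a standard general-topology fact. The main technical obstacle is Step~(i), namely rewriting $D(\psi * k)(y)$ as an $L^1$-pairing in $\psi$ continuously parameterized in $y$ on a possibly non-unimodular $G$; once this identification is in hand, everything else reduces to Banach--Alaoglu combined with the algebraic characterization in Proposition~\ref{prp:eigenfunctions}(iv).
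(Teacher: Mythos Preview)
Your proof is correct and follows the same essential idea as the paper's in Step~(i): both write $D\phi$ (or $D(\phi*k)$) as the pairing of $\phi$ against an $L^1$ function depending continuously on the base point, and then use the uniform bound $\|\phi_\alpha\|_\infty\le 1$ to upgrade pointwise to locally uniform convergence. The paper packages this via metrizability (reducing to sequences) and an equiboundedness/equicontinuity argument, while you work with nets and a finite $L^1$-net over a compact parameter set; these are equivalent formulations of the same mechanism.

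Step~(ii) is where you take a genuinely different route. The paper establishes properness by redoing the equicontinuity estimate and applying Ascoli--Arzel\`a in $C(G)$ to extract a subsequence, then checks by hand that the uniform limit is of positive type, satisfies $\phi(e)=1$, and solves $L_j\phi=\lambda_j\phi$ distributionally. You instead invoke Banach--Alaoglu on the unit ball of $L^\infty(G)$, show that the conditions defining $\evmap_L^{-1}(K)$ are weak-$*$ closed (positive type via $\langle f^**f,\,\cdot\,\rangle\ge 0$, the eigenfunction property via the integral criterion of Proposition~\ref{prp:eigenfunctions}(iv), and nonvanishing via $(\phi*k)(e)>0$). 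This is a cleaner argument: it avoids repeating the equicontinuity step and makes systematic use of the characterisation (iv) already available. The paper's approach has the minor advantage of producing the limit directly in $C(G)$, so the passage from an $L^\infty$ weak-$*$ limit to its continuous positive-type representative is not needed.
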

\begin{proof}
Since $G$ is second-countable, the three aforementioned topologies on $\PP_L$ are all metrizable (cf.\ \cite{megginson_introduction_1998}, Corollary~2.6.20). In particular, in order to prove that they coincide, it is sufficient to show that they induce the same notion of convergence of sequences.

Let $(\phi_k)_k$ be a sequence in $\PP_L$. If $(\phi_k)_k$ converges in $\E(G)$, then a fortiori it converges in $C(G)$. Moreover, since $\|\phi_k\|_\infty = 1$ for all $k$, convergence in $C(G)$ implies weak-$*$ convergence in $L^\infty(G)$ by dominated convergence.

Suppose now that $\phi_k \to \phi \in \PP_L$ with respect to the weak-$*$ topology of $L^\infty(G)$. Take $m = e^{-p_*} \in \JJ_L$, so that $m > 0$. By Proposition~\ref{prp:weakstrongeigenfunctions}, for all $D \in \Diff(G)$, we then have
\[D\phi_k = \frac{\phi_k * D \breve m}{m(\evmap_L(\phi_k))}, \qquad D\phi = \frac{\phi * D \breve m}{m(\evmap_L(\phi))};\]
in particular, for every $x \in G$, since $\RA_x D \breve m \in L^1(G)$,
\[D\phi_k(x) = \frac{\langle \RA_x D \breve m, \phi_k \rangle}{m(\evmap_L(\phi_k))} \to \frac{\langle \RA_x D \breve m, \phi \rangle}{m(\evmap_L(\phi))} = D\phi(x)\]
by Lemma~\ref{lem:continuouseigenvalues}. Moreover, again by Lemma~\ref{lem:continuouseigenvalues}, $m(\evmap_L(\phi_k)) \geq c > 0$ for some $c$ and all $k$, so that $\|D\phi_k\|_\infty \leq c^{-1} \|D\breve m\|_1$. This means that, for all $D \in \Diff(G)$, the family $\{D\phi_k\}_k$ is equibounded; but then also, for all $D \in \Diff(G)$, the family $\{D\phi_k\}_k$ is equicontinuous, so that the previously proved pointwise convergence $D\phi_k \to D\phi$ is in fact uniform on compacta. By arbitrariness of $D \in \Diff(G)$, we have then proved that $\phi_k \to \phi$ in $\E(G)$.

Let now $K \subseteq \R^n$ be compact, and take a sequence $(\phi_k)_k$ in $\PP_L$ such that $\evmap_L(\phi_k) \in K$ for all $k$. As before, the sequence $(\phi_k)_k$ is equibounded and equicontinuous, so that, by the Ascoli-Arzel\`a theorem (see \cite{bourbaki_topology2}, \S X.2.5), we can find a subsequence $\phi_{k_h}$ which converges uniformly on compacta to a function $\phi \in C(G)$, and such that moreover $\evmap_L(\phi_{k_h})$ converges to some $\lambda \in K$. It is now easy to show that $\phi$ is of positive type and $\phi(e) = 1$; moreover, for all $\eta \in \D(G)$,
\[\langle L_j \phi, \eta \rangle = \lim_h \langle L_j \phi_{k_h}, \eta \rangle = \lim_h \evmap_{L,j}(\phi_{k_h}) \langle \phi_{k_h}, \eta \rangle = \lambda_j \langle \phi, \eta \rangle,\]
so that, by Proposition~\ref{prp:weakstrongeigenfunctions}, $\phi$ is a (smooth) joint eigenfunction of $L_1,\dots,L_n$, hence $\phi \in \PP_L$. Since $\PP_L$ is metrizable, this shows that $\evmap_L^{-1}(K)$ is compact in $\PP_L$. By the arbitrariness of the compact $K \subseteq \R^n$, we conclude that $\evmap_L$ is proper and closed (see \cite{bourbaki_topology1}, Propositions I.10.1 and I.10.7).
\end{proof}

The following result, together with the Krein-Milman theorem, shows that the image of $\evmap_L$ does not change if we restrict to the joint eigenfunctions associated with \emph{irreducible} representations.

\begin{prp}\label{prp:eigenextremals}
For $\lambda \in \R^n$, the set $\evmap_L^{-1}(\lambda)$ is a weakly-$*$ compact and convex subset of $L^\infty(G)$, whose extreme points are the ones associated with irreducible representations.
\end{prp}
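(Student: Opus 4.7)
The plan is to check convexity, weak-$*$ compactness, and the extreme-point characterization in turn. Convexity is immediate: if $\phi_1,\phi_2 \in \evmap_L^{-1}(\lambda)$ and $t \in [0,1]$, then $t\phi_1 + (1-t)\phi_2$ is again of positive type, equals $1$ at $e$, and by linearity is a joint eigenfunction of $L_1,\dots,L_n$ with eigenvalue $\lambda$. For weak-$*$ compactness, I would invoke Proposition~\ref{prp:eigentopologies}: the map $\evmap_L$ on $\PP_L$ is continuous and proper, so $\evmap_L^{-1}(\lambda)$ is compact in $\PP_L$; since the topology of $\PP_L$ is induced by the weak-$*$ topology of $L^\infty(G)$, the inclusion into $L^\infty(G)$ is continuous and the image is a weak-$*$ compact (hence weak-$*$ closed) subset.

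For the extreme-point statement, Proposition~\ref{prp:eigenfunctions} lets us write any $\phi \in \evmap_L^{-1}(\lambda)$ as $\phi(x) = \langle \pi(x) v, v\rangle$ with $v \in \HH^\infty$ a cyclic unit joint eigenvector of $d\pi(L_1),\dots,d\pi(L_n)$ with eigenvalue $\lambda$. If $\pi$ is irreducible and $\phi = t\phi_1 + (1-t)\phi_2$ is a convex decomposition in $\evmap_L^{-1}(\lambda)$ with $t \in (0,1)$, then $\phi - t\phi_1 = (1-t)\phi_2$ is of positive type, so $t\phi_1 \leq \phi$ in the order of positive-type functions. The standard correspondence between dominated positive-type functions and operators in the commutant (cf.\ \cite{folland_course_1995}) yields $T \in \pi(G)'$ with $0 \leq T \leq I$ and $t\phi_1(x) = \langle \pi(x) T v, v\rangle$. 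Irreducibility forces $T = cI$, and evaluating $t\phi_1 = c\phi$ at $e$ gives $\phi_1 = \phi$; symmetrically $\phi_2 = \phi$.

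Conversely, assume $\pi$ is not irreducible and choose a non-scalar self-adjoint $T \in \pi(G)'$ with $0 \leq T \leq I$. Cyclicity of $v$ forces $Tv, (I-T)v \neq 0$, so $c = \langle Tv, v\rangle \in (0,1)$. Since $T^{1/2}$ also lies in the von Neumann algebra $\pi(G)'$, it commutes with each $\pi(g)$ and hence, by differentiation, with each $d\pi(L_j)$ on smooth vectors; thus $T^{1/2} v \in \HH^\infty$ is a joint eigenvector with eigenvalue $\lambda$, and Proposition~\ref{prp:eigenfunctions} shows that $\phi_1(x) = c^{-1}\langle \pi(x) T v, v\rangle$ belongs to $\evmap_L^{-1}(\lambda)$. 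Defining $\phi_2$ analogously from $I-T$ gives $\phi = c\phi_1 + (1-c)\phi_2$. If one had $\phi_1 = \phi$, then $\langle \pi(x)(T - cI) v, v\rangle = 0$ for all $x \in G$; replacing $x$ by $y^{-1}x$ and using cyclicity of $v$ yields $(T - cI) v = 0$, whence $T = cI$ on the cyclic closure of $v$, i.e.\ on all of $\HH$, contradicting the choice of $T$. So $\phi_1 \neq \phi$ and $\phi$ is not extreme.

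The main delicate point is the $(\Rightarrow)$ direction: one must guarantee that the decomposing functions $\phi_1,\phi_2$ stay inside the small fibre $\evmap_L^{-1}(\lambda)$, and not merely in the larger convex set of positive-type functions with value $1$ at $e$. This is handled by building the decomposition from an operator $T$ in the commutant $\pi(G)'$, whose square root preserves both smoothness and the joint eigenvector property of $v$.
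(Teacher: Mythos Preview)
Your proof is correct, but the route differs from the paper's. The paper does not argue both directions from scratch via the commutant; instead it reduces everything to the classical fact (\cite{folland_course_1995}, Theorem~3.25) that the extreme points of $\PP_1 = \{\phi \text{ of positive type} : \phi(e)=1\}$ are exactly those associated with irreducible representations. Since $\evmap_L^{-1}(\lambda) \subseteq \PP_1$, an extreme point of $\PP_1$ lying in the fibre is automatically extreme there, so the only thing to prove is the converse: if $\phi \in \evmap_L^{-1}(\lambda)$ decomposes nontrivially in $\PP_1$ as $\theta_0^2 \phi_0 + \theta_1^2 \phi_1$, then $\phi_0,\phi_1$ already lie in $\evmap_L^{-1}(\lambda)$. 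The paper establishes this by realizing $\phi$ via the cyclic vector $v=(\theta_0 v_0,\theta_1 v_1)$ in $\HH_0\oplus\HH_1$, invoking Proposition~\ref{prp:eigenfunctions} to see $v$ is a smooth joint eigenvector, and then pushing this down to each $v_k$ through the canonical projections $P_k$, which intertwine the representations.

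Your argument is more self-contained: you effectively reprove the extreme-point characterization of $\PP_1$ while simultaneously tracking the eigenvalue condition, using that $T^{1/2}\in\pi(G)'$ preserves $\HH^\infty$ and commutes with $d\pi(L_j)$. This avoids quoting Theorem~3.25 as a black box, at the cost of reproducing its mechanism. The paper's approach is shorter because it isolates the single nontrivial step (the decomposition in $\PP_1$ respects the fibre) and delegates the rest; yours makes the role of the commutant explicit throughout. Both hinge on the same idea---an intertwiner of $\pi$ preserves the smooth joint eigenvector property of $v$---implemented via projections $P_k$ in the paper and via $T^{1/2}$ in your version.
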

\begin{proof}
Clearly $\evmap_L^{-1}(\lambda)$ is convex, whereas compactness follows from Proposition~\ref{prp:eigentopologies}. In order to conclude, it will be sufficient to show that the extreme points of $\evmap_L^{-1}(\lambda)$ are also extreme points of the set $\PP_1$ of the functions $\phi$ of positive type on $G$ such that $\phi(e) = 1$ (see \cite{folland_course_1995}, Theorem~3.25). 

Suppose then that $\phi \in \evmap_L^{-1}(\lambda)$ is not extreme in $\PP_1$, i.e.,
\[\phi = \theta_0^2 \phi_0 + \theta_1^2 \phi_1\]
for some $\phi_0,\phi_1 \in \PP_1$ different from $\phi$ and some $\theta_0,\theta_1 > 0$ with $\theta_0^2 + \theta_1^2 = 1$. For $k=0,1$, we have $\phi_k(x) = \langle \pi_k(x) v_k, v_k \rangle$, where $\pi_k$ is a unitary representation of $G$ on $\HH_k$ and $v_k$ is a cyclic vector of norm $1$. If
\[v = (\theta_0 v_0, \theta_1 v_1) \in \HH_0 \oplus \HH_1, \qquad \HH = \overline{\Span\{(\pi_0 \oplus \pi_1)(x) v \tc x \in G\}},\]
and $\pi$ is the restriction of $\pi_0 \oplus \pi_1$ to $\HH$, then it is easy to see that $v$ is a cyclic vector for $\pi$ and that $\phi(x) = \langle \pi(x) v, v \rangle,$ therefore by Proposition~\ref{prp:eigenfunctions} it follows that $v \in \HH^\infty$ and that $d\pi(L_j) v = \lambda_j v$ for $j=1,\dots,n$.

If $P_k : \HH \to \HH_k$ is the restriction of the canonical projection $\HH_0 \oplus \HH_1 \to \HH_k$, it is immediate to check that $P_k$ intertwines $\pi$ and $\pi_k$, and that $P_k v = \theta_k v_k$; hence, for all $w \in \HH_k$ and $x \in G$,
\[\langle \pi_k(x) v_k, w \rangle = \theta_k^{-1} \langle \pi_k(x) P_k v , w \rangle = \theta_k^{-1} \langle \pi(x) v, P_k^* w \rangle.\]
This identity, together with the arbitrariness of $w \in \HH_k$, shows that also $v_k \in \HH_k^\infty$. Moreover, since $P_k$ intertwines $\pi(x)$ and $\pi_k(x)$ for all $x \in G$, it is easy to check that it intertwines also $d\pi(D)$ and $d\pi_k(D)$ for all $D \in \Diff(G)$, therefore
\[d\pi(L_j) v_k = \theta_k^{-1} P_k d\pi(L_j) v = \lambda_j v_k\]
for $j=1,\dots,n$. By Proposition~\ref{prp:eigenfunctions}, this shows that $\phi_0,\phi_1 \in \evmap_L^{-1}(\lambda)$, thus $\phi$ is not even extreme in $\evmap_L^{-1}(\lambda)$.
\end{proof}

In order to relate the joint spectrum of $L_1,\dots,L_n$ with (some subset of) $\evmap_L(\PP_L)$, we recall the notion of \emph{weak containment} of representations. If $\pi,\varpi$ are unitary representations of $G$, then $\pi$ is said to be \emph{weakly contained} in $\varpi$ if 
\[\|\pi(f)\| \leq \|\varpi(f)\| \qquad\text{for all $f \in L^1(G)$.}\]
Equivalent characterizations of weak containment can be given involving functions of positive type (cf.\ also \S3.5 of \cite{greenleaf_invariant_1969} and \S3.4 of \cite{dixmier_algebras_1982}):

\begin{lem}\label{lem:weakcontainment}
Let $\varpi$ be a unitary representation of $G$. Let moreover $\phi$ be a function of positive type, of the form \eqref{eq:positivetype} for some unitary representation $\pi$ of $G$ on the Hilbert space $\HH$ and some cyclic vector $v$ of unit norm. Then the following are equivalent:
\begin{itemize}
\item[(i)] $\pi$ is weakly contained in $\varpi$;
\item[(ii)] $|\langle f, \phi \rangle| \leq \|\varpi(f)\|$ for all $f \in L^1(G)$;
\item[(iii)] $|\langle f, \phi \rangle| \leq C \|\varpi(f)\|$ for some $C > 0$ and all $f \in L^1(G)$.
\end{itemize}
\end{lem}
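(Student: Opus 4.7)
The implications (i)$\Rightarrow$(ii)$\Rightarrow$(iii) are the easy ones and I would dispatch them first. Using the paper's convention $\pi(f) = \int_G f(x)\pi(x^{-1})\,dx$ and the definition $\phi(x) = \langle \pi(x)v,v\rangle$, one computes
\[
\langle f,\phi\rangle = \int_G f(x)\,\overline{\langle \pi(x)v,v\rangle}\,dx = \int_G f(x)\,\langle\pi(x^{-1})v,v\rangle\,dx = \langle \pi(f)v,v\rangle,
\]
so Cauchy--Schwarz together with $\|v\|=1$ yields $|\langle f,\phi\rangle|\leq\|\pi(f)\|$, and (i) immediately gives (ii). The step (ii)$\Rightarrow$(iii) is trivial with $C=1$.

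For the substantive direction (iii)$\Rightarrow$(i), my plan is to realize $\pi$ as a factor of $\varpi$ through the $C^*$-algebra $\Alg_\varpi := \overline{\varpi(L^1(G))}$, so that the required norm inequality becomes automatic, as representations of $C^*$-algebras are contractive. Define $\omega : L^1(G)\to\C$ by $\omega(f) = \langle f,\phi\rangle = \langle\pi(f)v,v\rangle$. By (iii), $|\omega(f)|\leq C\|\varpi(f)\|$, so $\omega$ vanishes on $\{f\in L^1(G) \tc \varpi(f)=0\}$ and descends to a bounded linear functional $\tilde\omega$ on $\varpi(L^1(G))$ of norm at most $C$, which extends uniquely by continuity to $\Alg_\varpi$.

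Next I would verify that $\tilde\omega$ is positive. Using $\pi(f*g)=\pi(g)\pi(f)$ and $\pi(f^*)=\pi(f)^*$ (and the analogous identities for $\varpi$), for every $f \in L^1(G)$ one has
\[
\tilde\omega(\varpi(f)^*\varpi(f)) = \tilde\omega(\varpi(f*f^*)) = \omega(f*f^*) = \langle \pi(f)^*\pi(f)v,v\rangle = \|\pi(f)v\|^2 \geq 0,
\]
and positivity propagates from the dense subset $\varpi(L^1(G))$ to all of $\Alg_\varpi$ by continuity of $\tilde\omega$. The GNS construction applied to $(\Alg_\varpi,\tilde\omega)$ then yields a representation $\rho$ of $\Alg_\varpi$ on a Hilbert space $\HH'$ with a cyclic vector $v'$ satisfying $\tilde\omega(a) = \langle \rho(a)v',v'\rangle$. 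The composition $\pi' := \rho\circ\varpi$ is a representation of $L^1(G)$ on $\HH'$ with $\langle\pi'(f)v',v'\rangle = \omega(f) = \langle\pi(f)v,v\rangle$, and $v'$ remains cyclic for $\pi'$ by density of $\varpi(L^1(G))$ in $\Alg_\varpi$ and continuity of $\rho$.

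By the uniqueness of the GNS construction attached to the positive functional $\omega$ on $L^1(G)$ with cyclic vector, the pairs $(\pi,v)$ and $(\pi',v')$ are unitarily equivalent, and consequently
\[
\|\pi(f)\| = \|\pi'(f)\| = \|\rho(\varpi(f))\| \leq \|\varpi(f)\|,
\]
which is (i). The delicate point I anticipate is cleanly identifying the $L^1$-representation $\pi'$ obtained by composition with the unitary representation of $G$ produced by the standard GNS construction from $\phi$; this requires care with non-degeneracy of $\rho$ and the interplay with an approximate identity in $L^1(G)$, but is routine once set up correctly.
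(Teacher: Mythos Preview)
Your proof is correct and shares the same starting point as the paper's --- extending $\omega(f)=\langle f,\phi\rangle$ to a positive bounded functional on the $C^*$-closure $\Alg_\varpi$ of $\varpi(L^1(G))$ --- but then diverges in how the inequality $\|\pi(f)\|\leq\|\varpi(f)\|$ is extracted. You invoke the full GNS machinery: build $(\rho,\HH',v')$ from $\tilde\omega$, identify $\rho\circ\varpi$ with $\pi$ by GNS uniqueness, and conclude via the automatic contractivity of $C^*$-representations. The paper instead applies a single Cauchy--Schwarz--type inequality for positive functionals on $C^*$-algebras (Dixmier, Proposition~2.1.5(ii)): from $\tilde\omega(b^*a^*ab)\leq\|a^*a\|\,\tilde\omega(b^*b)$ one reads off directly
\[
\|\pi(f)\pi(g)v\|^2=\langle g*f*f^**g^*,\phi\rangle\leq\|\varpi(f)\|^2\,\|\pi(g)v\|^2,
\]
and density of $\{\pi(g)v: g\in L^1(G)\}$ in $\HH$ (using cyclicity of $v$ and an approximate identity) finishes the job. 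The paper's route is shorter and avoids the bookkeeping you flag at the end about non-degeneracy and approximate identities in the GNS identification; your route is more structural and makes transparent \emph{why} the constant $C$ in (iii) disappears --- it is absorbed into the GNS construction and then erased by the fact that $*$-homomorphisms of $C^*$-algebras are automatically contractive.
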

\begin{proof}
(i) $\Rightarrow$ (ii) $\Rightarrow$ (iii). Trivial.

(iii) $\Rightarrow$ (i). Let $\widetilde\HH$ be the Hilbert space on which $\varpi$ acts. The hypothesis (iii) implies that $\phi$ defines a (positive) continuous functional on the sub-C$^*$-algebra of $\Bdd(\widetilde\HH)$ which is the closure of $\varpi(L^1(G))$. By applying Proposition~2.1.5(ii) of \cite{dixmier_algebras_1982} to this functional, one obtains, for $f,g \in L^1(G)$,
\[\|\pi(f) \pi(g) v\|^2 = \langle g * f* f^* * g^* , \phi \rangle \leq \|\varpi(f*f^*)\| \langle g * g^*, \phi \rangle = \|\varpi(f)\|^2 \|\pi(g)v\|^2.\]
Since $v$ is cyclic and $L^1(G)$ contains an approximate identity, the set
\[\{\pi(g) v \tc g \in L^1(G)\}\]
is a dense subspace of $\HH$, therefore the previously proved inequality gives (i).
\end{proof}

For a unitary representation $\varpi$ of $G$, we denote by $\PP_{L,\varpi}$ the set of the functions $\phi \in \PP_L$ which satisfy the equivalent conditions of Lemma~\ref{lem:weakcontainment}.

\begin{prp}
Let $\varpi$ be a unitary representation of $G$. Then $\PP_{L,\varpi}$ is a closed subset of $\PP_L$. Moreover, for every $\lambda \in \R^n$, $\PP_{L,\varpi} \cap \evmap_L^{-1}(\lambda)$ is compact and convex, and its extreme points are the ones associated with irreducible representations.
\end{prp}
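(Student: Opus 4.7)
The plan is to verify closedness, then convexity and compactness, and finally to characterize the extreme points in parallel with the proof of Proposition~\ref{prp:eigenextremals}, reducing each step to results already established in the paper.

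For closedness, I would invoke characterization~(ii) of Lemma~\ref{lem:weakcontainment}: for each fixed $f \in L^1(G)$, the map $\phi \mapsto \langle f, \phi \rangle$ is continuous on $\PP_L$ with its weak-$*$ topology, which by Proposition~\ref{prp:eigentopologies} agrees with its other natural topologies. Thus $\PP_{L,\varpi}$ is the intersection over $f \in L^1(G)$ of the closed sets $\{\phi \in \PP_L \tc |\langle f,\phi\rangle| \leq \|\varpi(f)\|\}$. Convexity of $\PP_{L,\varpi} \cap \evmap_L^{-1}(\lambda)$ is then immediate, since positive-typeness, the normalization $\phi(e)=1$, the eigenequations $L_j \phi = \lambda_j \phi$, and the inequality $|\langle f,\phi\rangle| \leq \|\varpi(f)\|$ are all preserved under convex combinations. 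Compactness follows since $\evmap_L^{-1}(\lambda)$ is already weakly-$*$ compact by Proposition~\ref{prp:eigenextremals}, and $\PP_{L,\varpi} \cap \evmap_L^{-1}(\lambda)$ is a closed subset.

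For the extreme-point characterization, one direction is easy: if $\phi \in \PP_{L,\varpi} \cap \evmap_L^{-1}(\lambda)$ has irreducible associated representation, then $\phi$ is extreme in the larger set $\PP_1$ of normalized functions of positive type (by Theorem~3.25 of \cite{folland_course_1995}), hence a fortiori extreme in any convex subset containing it. The converse is the main task. Suppose $\phi$ has reducible associated representation $\pi$ on $\HH$ with cyclic unit vector $v$, and choose a proper non-trivial $G$-invariant orthogonal decomposition $\HH = \HH_1 \oplus \HH_2$, writing $v = v_1 + v_2$ with $v_k \in \HH_k$; cyclicity of $v$ forces $v_1, v_2 \neq 0$. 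Setting $\phi_k(x) = \|v_k\|^{-2} \langle \pi(x) v_k, v_k \rangle$ yields the convex decomposition $\phi = \|v_1\|^2 \phi_1 + \|v_2\|^2 \phi_2$ with $\phi_k \neq \phi$; it then suffices to show $\phi_k \in \PP_{L,\varpi} \cap \evmap_L^{-1}(\lambda)$, contradicting extremality.

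The crucial point, which is the main obstacle, is precisely that the decomposition stays inside $\PP_{L,\varpi}$. Since $\HH_k$ is $G$-invariant, the orthogonal projection $P_k : \HH \to \HH_k$ commutes with $\pi$ and so intertwines $\pi$ with the subrepresentation $\pi_k := \pi|_{\HH_k}$; hence $v_k = P_k v \in \HH_k^\infty$ and $d\pi_k(L_j) v_k = P_k d\pi(L_j) v = \lambda_j v_k$, so $\phi_k \in \PP_L \cap \evmap_L^{-1}(\lambda)$ by Proposition~\ref{prp:eigenfunctions}. Finally, $\pi_k$ being a subrepresentation of $\pi$ gives $\|\pi_k(f)\| \leq \|\pi(f)\|$ for all $f \in L^1(G)$, and since $\phi \in \PP_{L,\varpi}$ means $\pi$ is weakly contained in $\varpi$ by condition~(i) of Lemma~\ref{lem:weakcontainment}, we conclude $\|\pi_k(f)\| \leq \|\varpi(f)\|$, so $\pi_k$ too is weakly contained in $\varpi$ and $\phi_k \in \PP_{L,\varpi}$. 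The core mechanism is thus the transitivity of weak containment under restriction to invariant subspaces.
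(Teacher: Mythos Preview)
Your proof is correct. The first two parts (closedness, compactness, convexity) match the paper's argument exactly. For the extreme-point characterization, however, you take a genuinely different route. You argue by contrapositive, building an explicit convex decomposition from an orthogonal splitting $\HH = \HH_1 \oplus \HH_2$ of the GNS space, then verifying $\phi_k \in \evmap_L^{-1}(\lambda)$ via the projection argument (essentially re-running part of Proposition~\ref{prp:eigenextremals}) and $\phi_k \in \PP_{L,\varpi}$ via condition~(i) of Lemma~\ref{lem:weakcontainment}, using that weak containment passes to subrepresentations. The paper instead reduces wholesale to Proposition~\ref{prp:eigenextremals}: it shows that \emph{any} decomposition $\phi = (1-\theta)\phi_0 + \theta\phi_1$ with $\phi_0,\phi_1 \in \evmap_L^{-1}(\lambda)$ automatically has $\phi_0,\phi_1 \in \PP_{L,\varpi}$, via the functional estimate
\[
(1-\theta)\,|\langle f,\phi_0\rangle|^2 + \theta\,|\langle f,\phi_1\rangle|^2 \;\leq\; \langle f*f^*,\phi\rangle \;\leq\; \|\varpi(f)\|^2
\]
combined with condition~(iii) of Lemma~\ref{lem:weakcontainment}. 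The paper's approach is shorter since it never touches the Hilbert space of $\phi$ and does not need to re-derive the eigenvalue property or the non-triviality $\phi_k \neq \phi$; your approach is more hands-on and makes the representation-theoretic mechanism (restriction to invariant subspaces) explicit. One small point: you assert $\phi_k \neq \phi$ without proof---this is the standard cyclicity argument behind Theorem~3.25 of \cite{folland_course_1995} (if $\phi_1 = c\phi$ then $(P_1 - c)v \perp \pi(G)v$, forcing $P_1 v = cv$ and hence $P_1$ trivial), and is needed to conclude that the decomposition is non-trivial.
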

\begin{proof}
Condition (ii) of Lemma~\ref{lem:weakcontainment} is a convex and closed condition (with respect to the weak-$*$ topology of $L^\infty(G)$) for every $f \in L^1(G)$. Therefore $\PP_{L,\varpi}$ is closed in $\PP_L$, and moreover, for $\lambda \in \R^n$, since $\evmap_L^{-1}(\lambda)$ is compact and convex (see Proposition~\ref{prp:eigenextremals}), $\PP_{L,\varpi} \cap \evmap_L^{-1}(\lambda)$ is compact and convex too.

In order to conclude, again by Proposition~\ref{prp:eigenextremals}, it is sufficient to show that an extreme point $\phi$ of $\PP_{L,\varpi} \cap \evmap_L^{-1}(\lambda)$ is also extreme in $\evmap_L^{-1}(\lambda)$. Suppose then that $\phi = (1-\theta)\phi_0 + \theta \phi_1$ for some $\phi_0,\phi_1 \in \evmap_L^{-1}(\lambda)$ and $0 < \theta < 1$. For $f \in L^1(G)$, we have
\[(1-\theta) |\langle f, \phi_0\rangle|^2 + \theta |\langle f, \phi_1\rangle|^2 = \langle f * f^* , \phi \rangle \leq \|\varpi(f)\|^2\]
by Lemma~\ref{lem:weakcontainment} and positivity, therefore
\[|\langle f, \phi_0 \rangle| \leq (1-\theta)^{-1/2} \|\varpi(f)\|, \qquad |\langle f, \phi_1 \rangle| \leq \theta^{-1/2} \|\varpi(f)\|,\]
and again by Lemma~\ref{lem:weakcontainment} we obtain $\phi_0,\phi_1 \in \PP_{L,\varpi} \cap \evmap_L^{-1}(\lambda)$.
\end{proof}

\begin{thm}\label{thm:spectrumeigenfunctions}
Let $\varpi$ be a unitary representation of $G$ on a Hilbert space $\HH$. Then $\evmap_L(\PP_{L,\varpi})$ is the joint spectrum of $d\varpi(L_1),\dots,d\varpi(L_n)$ on $\HH$.
\end{thm}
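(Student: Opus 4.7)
The plan is to establish both inclusions of the claimed equality $\evmap_L(\PP_{L,\varpi}) = \Sigma_\varpi$, where $\Sigma_\varpi$ denotes the support of the joint spectral resolution $E_\varpi$ of $d\varpi(L_1),\dots,d\varpi(L_n)$ provided by Corollary~\ref{cor:commutativealgebra}.

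For the inclusion $\evmap_L(\PP_{L,\varpi}) \subseteq \Sigma_\varpi$, I would start from $\phi \in \PP_{L,\varpi}$ with $\lambda = \evmap_L(\phi)$ and its associated representation $\pi$. By Proposition~\ref{prp:eigenfunctions}, the cyclic vector $v$ is smooth and $\lambda$ is a joint eigenvalue of $d\pi(L_1),\dots,d\pi(L_n)$, so $\lambda$ belongs to the joint spectrum $\Sigma_\pi$. Since $\breve m \in L^1(G)$ realizes both $m(d\pi(L))$ and $m(d\varpi(L))$ as convolution operators (Proposition~\ref{prp:Jl1}), weak containment yields $|m(\lambda)| \leq \|\pi(\breve m)\| \leq \|\varpi(\breve m)\| = \sup_{\Sigma_\varpi}|m|$ for every $m \in \JJ_L$. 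If by contradiction $\lambda$ were not in the closed set $\Sigma_\varpi$, Tietze--Urysohn together with the uniform density of $\JJ_L$ in $C_0(\R^n)$ from Lemma~\ref{lem:Jdensity} would produce some $m \in \JJ_L$ separating $\lambda$ from $\Sigma_\varpi$ and violating the preceding inequality.

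For the reverse inclusion, fix $\lambda \in \Sigma_\varpi$ and a basis of shrinking neighborhoods $U_k$ of $\lambda$. Since $E_\varpi(U_k) \neq 0$, I would choose unit vectors $v_k' \in \mathrm{range}(E_\varpi(U_k))$ and regularize them by setting $v_k = \varpi(\breve m_0) v_k' / \|\varpi(\breve m_0) v_k'\|$, where $m_0 = e^{-p_0} \in \JJ_L$; this produces smooth vectors (since $\breve m_0 \in L^{1;\infty}(G)$ by Proposition~\ref{prp:Jl1}) still lying in the spectral subspace of $U_k$, because $\varpi(\breve m_0) = m_0(d\varpi(L))$ commutes with the spectral projections. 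The denominator is bounded away from zero thanks to the lower bound $\|\varpi(\breve m_0) v_k'\| \geq e^{-\sup_{U_k} p_0}$. The crucial consequence is that for every $D \in \Diff(G)$ one has, via $d\varpi(D)\varpi(\breve m_0) = \varpi(D\breve m_0)$, the uniform estimate $\|d\varpi(D) v_k\| \leq C \|D\breve m_0\|_1$; passing to the matrix coefficients $\phi_k(x) = \langle \varpi(x) v_k, v_k\rangle$ translates this into uniform sup-norm bounds on every left-invariant derivative $D\phi_k$.

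The main technical step is then to extract a useful limit from the $\phi_k$. The uniform bounds on all derivatives, combined with Ascoli--Arzel\`a and a diagonal extraction, yield a subsequence converging in $\E(G)$ to a smooth function of positive type $\phi$ with $\phi(e) = 1$. Because $v_k \in \mathrm{range}(E_\varpi(U_k))$, one has $\|(d\varpi(L_j) - \lambda_j) v_k\| \leq \sup_{U_k}|x_j - \lambda_j| \to 0$, so $L_j \phi_k - \lambda_j \phi_k$ vanishes uniformly; in the limit $L_j \phi = \lambda_j \phi$, and Proposition~\ref{prp:weakstrongeigenfunctions} places $\phi$ in $\PP_L$ with $\evmap_L(\phi) = \lambda$. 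Weak containment of the representation associated with $\phi$ follows by passing to the limit in the bound $|\langle f, \phi_k \rangle| = |\langle \varpi(f) v_k, v_k\rangle| \leq \|\varpi(f)\|$ valid for every $f \in L^1(G)$, and invoking Lemma~\ref{lem:weakcontainment}. I expect the subtle point to be exactly this requirement that the approximate eigenvectors be simultaneously smooth and spectrally localized, which is precisely what the regularization by $\varpi(\breve m_0)$ is designed to achieve without disturbing either property.
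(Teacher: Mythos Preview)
Your argument is correct, but takes a genuinely different route from the paper's for the harder inclusion $\Sigma_\varpi \subseteq \evmap_L(\PP_{L,\varpi})$. The paper works abstractly: it identifies $\Sigma_\varpi$ with the Gelfand spectrum of the C$^*$-algebra $E_\varpi[C_0(\R^n)] = \overline{\{\varpi(\breve m) : m \in \JJ_L\}} \subseteq \Bdd(\HH)$, and then, given $\lambda \in \Sigma_\varpi$, extends the corresponding character to a state $\omega$ on all of $\Bdd(\HH)$ (using the extension theorem for positive functionals, \cite{dixmier_algebras_1982}, \S2.10). The function $\phi$ determined by $\langle f,\phi\rangle = \omega(\varpi(f))$ is then shown to lie in $\PP_{L,\varpi}$ with $\evmap_L(\phi)=\lambda$, via the multiplicativity criterion of Proposition~\ref{prp:eigenfunctions}(iv). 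Your approach instead manufactures $\phi$ concretely as an Ascoli--Arzel\`a limit of matrix coefficients of regularized approximate eigenvectors; this is closer in spirit to the compactness argument in Proposition~\ref{prp:eigentopologies}. Your proof is more elementary in that it avoids the Hahn--Banach-type extension of states, at the cost of the explicit smoothing step and the equicontinuity bookkeeping; the paper's proof is shorter and treats both inclusions symmetrically through the Gelfand-spectrum picture, but relies on heavier C$^*$-algebra machinery.
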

\begin{proof}
Let $E_\varpi$ be the joint spectral resolution of $d\varpi(L_1),\dots,d\varpi(L_n)$. The joint spectrum of $d\varpi(L_1),\dots,d\varpi(L_n)$, i.e., the support of $E_\varpi$, can be identified with the Gelfand spectrum of the C$^*$-algebra $E_\varpi[C_0(\R^n)]$ (cf.\ the proof of Proposition~\ref{prp:Jdensity}), i.e., with the closure in $\Bdd(\HH)$ of $\{\varpi(\breve m) \tc m \in \JJ_L\}$ (see Lemma~\ref{lem:Jdensity} and Proposition~\ref{prp:Jl1}).

In particular, if $\phi \in \PP_{L,\varpi}$, then, by Lemma~\ref{lem:weakcontainment},
\[|\langle \breve m, \phi \rangle| \leq \|\varpi(\breve m)\| \qquad\text{for all $m \in \JJ_L$,}\]
therefore $\phi$ defines a continuous functional on the C$^*$-algebra $E_\varpi[C_0(\R^n)]$, which is multiplicative by Proposition~\ref{prp:eigenfunctions}, and thus belongs to the Gelfand spectrum of $E_\varpi[C_0(\R^n)]$. Since
\[\langle \breve m, \phi \rangle = m(\evmap_L(\phi)) \qquad\text{for all $m \in \JJ_L$}\]
(see Proposition~\ref{prp:weakstrongeigenfunctions}), the element of $\supp E_\varpi$ corresponding to this functional is $\evmap_L(\phi)$.

Conversely, if $\lambda \in \supp E_\varpi$, then we can extend the corresponding character of $E_\varpi[C_0(\R^n)]$ to a positive functional $\omega$ of norm $1$ on the whole $\Bdd(\HH)$ (see \cite{dixmier_algebras_1982}, \S2.10). Since $\omega \circ \varpi : L^1(G) \to \C$ is linear and continuous, there exists $\phi \in L^\infty(G)$ such that
\[\langle f, \phi \rangle = \omega(\varpi(f)) \qquad\text{for all $f \in L^1(G)$;}\]
in fact, since $\omega$ is positive, $\phi$ must be a function of positive type on $G$ (see \cite{folland_course_1995}, \S3.3). Moreover, since $\omega$ extends a multiplicative functional on $E_\varpi[C_0(\R^n)]$, it must be
\[\langle \breve m_1 * \breve m_2, \phi \rangle = \langle \breve m_1, \phi \rangle \langle \breve m_2, \phi \rangle \qquad\text{for all $m_1,m_2 \in \JJ_L$.}\]
Therefore, by Proposition~\ref{prp:eigenfunctions}, $\phi \in \PP_L$, and in fact $\phi \in \PP_{L,\varpi}$ since $|\langle f, \phi\rangle| \leq \|\varpi(f)\|$ (see Lemma~\ref{lem:weakcontainment}). Finally
\[m(\evmap_L(\phi)) = \langle \breve m, \phi \rangle = \omega(\varpi(\breve m)) = m(\lambda) \qquad\text{for all $m \in \JJ_L$,}\]
by Proposition~\ref{prp:weakstrongeigenfunctions}, since $\omega$ extends the character corresponding to $\lambda$, and consequently $\evmap_L(\phi) = \lambda$ (see Lemma~\ref{lem:Jdensity}).
\end{proof}

In particular, the joint $L^2$ spectrum $\Sigma$ of $L_1,\dots,L_n$ coincides with the set of eigenvalues $\rho(\PP_{L,\RA})$ associated with the regular representation $\RA$ of $G$ on $L^2(G)$. When $G$ is amenable, every unitary representation is weakly contained in the regular representation (see \cite{greenleaf_invariant_1969}, \S3.5), hence

\begin{cor}\label{cor:spectrumeigenfunctions}
We have
\begin{equation}\label{eq:spectrainclusion}
\Sigma \subseteq \evmap_L(\PP_L),
\end{equation}
with equality when $G$ is amenable.
\end{cor}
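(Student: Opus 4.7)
The plan is to derive Corollary~\ref{cor:spectrumeigenfunctions} directly from Theorem~\ref{thm:spectrumeigenfunctions} specialized to the regular representation, combined with the characterization of amenability via weak containment.

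First, I would apply Theorem~\ref{thm:spectrumeigenfunctions} with $\varpi = \RA$ (the right regular representation on $L^2(G)$). By definition, the joint spectrum $\Sigma$ of $L_1,\dots,L_n$ is the support of their joint spectral resolution $E$ on $L^2(G)$, and the operators $L_j$ on $L^2(G)$ coincide (up to closure) with $d\RA(L_j)$; hence
\[
\Sigma \;=\; \evmap_L(\PP_{L,\RA}).
\]
Since by its very definition $\PP_{L,\RA}$ is a subset of $\PP_L$, the inclusion $\Sigma \subseteq \evmap_L(\PP_L)$ follows at once, giving \eqref{eq:spectrainclusion}.

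For the reverse inclusion under the amenability hypothesis, I would invoke the standard characterization of amenable groups (as recalled from \cite{greenleaf_invariant_1969}, \S3.5): every unitary representation of $G$ is weakly contained in the regular representation $\RA$. Given any $\phi \in \PP_L$, let $\pi$ be the unitary representation associated with $\phi$ via the GNS construction. By amenability, $\pi$ is weakly contained in $\RA$, so $\phi$ satisfies condition~(i) of Lemma~\ref{lem:weakcontainment} with $\varpi = \RA$, and therefore $\phi \in \PP_{L,\RA}$. Thus $\PP_L = \PP_{L,\RA}$, and applying $\evmap_L$ yields $\evmap_L(\PP_L) = \evmap_L(\PP_{L,\RA}) = \Sigma$.

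There is essentially no obstacle here: the nontrivial work has already been done in Theorem~\ref{thm:spectrumeigenfunctions} (which identifies spectra with eigenvalue sets of weakly contained positive-type eigenfunctions) and in Lemma~\ref{lem:weakcontainment} (which reconciles the operator-theoretic and positive-type-functional formulations of weak containment). The corollary is a direct two-line consequence, with amenability serving only to upgrade the containment $\PP_{L,\RA} \subseteq \PP_L$ to an equality.
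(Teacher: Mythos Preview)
Your proposal is correct and follows essentially the same approach as the paper: apply Theorem~\ref{thm:spectrumeigenfunctions} to the regular representation $\RA$ to obtain $\Sigma = \evmap_L(\PP_{L,\RA}) \subseteq \evmap_L(\PP_L)$, and then use the amenability characterization from \cite{greenleaf_invariant_1969} to conclude $\PP_{L,\RA} = \PP_L$ in the amenable case. The paper's argument is in fact the two-line remark immediately preceding the corollary, so there is nothing to add.
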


Notice that, when $G$ is not amenable, the inclusion \eqref{eq:spectrainclusion} can be strict: for instance, if $n = 1$ and $L_1$ is a sublaplacian, then $0 \in \evmap_L(\PP_L) \setminus \Sigma$, since $L_1$ has a spectral gap (cf.\ \cite{varopoulos_hardy_1995}).

Under a more restrictive hypothesis than amenability, viz., the symmetry of the Banach $*$-algebra $L^1(G)$, we can relate the joint spectrum of $L_1,\dots,L_n$ to the Gelfand spectrum of a closed $*$-subalgebra of $L^1(G)$ (cf.\  \cite{hulanicki_spectrum_1972,hulanicki_subalgebra_1974,hulanicki_commutative_1975} for the case of a single operator). Namely, let $\II_L$ be the closure of $\Kern_L(\JJ_L)$ in $L^1(G)$. $\II_L$ is a commutative Banach $*$-subalgebra of $L^1(G)$, and also, by Proposition~\ref{prp:Jdensity}, a dense $*$-subalgebra of the $C^*$-algebra $C_0(L)$.

\begin{lem}\label{lem:hermitiancharacters}
Suppose that $L^1(G)$ is symmetric. Then every character of $\II_L$ extends to a character of $C_0(L)$, so that the Gelfand spectra of the two Banach $*$-algebras coincide (also as topological spaces).
\end{lem}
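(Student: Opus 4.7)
The plan is to show every character $\chi$ of $\II_L$ is continuous with respect to the $\Cv^2$-norm, so that by density of $\II_L$ in $C_0(L)$ it extends uniquely to a continuous linear functional on $C_0(L)$, which is then automatically multiplicative and therefore a character of $C_0(L)$.

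First I would observe that $\II_L$ is a closed $*$-subalgebra of $L^1(G)$: the set $\Kern_L(\JJ_L)$ is stable under adjunction because $\JJ_L$ is generated by real-valued functions $e^{-p_j}$ and $(\breve f)^* = \breve{\overline f}$ by Lemma~\ref{lem:composition}(a). Then I would show that every $\chi \in \Delta(\II_L)$ is Hermitian, i.e., $\chi(g^*) = \overline{\chi(g)}$. Indeed, for $f = f^* \in \II_L$, symmetry of $L^1(G)$ gives $\sigma_{L^1(G)}(f) \subseteq \R$; since $\C \setminus \sigma_{L^1(G)}(f)$ is then connected, the classical spectral-permanence theorem for closed subalgebras yields $\sigma_{\II_L}(f) = \sigma_{L^1(G)}(f) \subseteq \R$, so $\chi(f) \in \R$.

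Next, I would recall that symmetry of $L^1(G)$ implies amenability of $G$ (as mentioned in the introduction), hence $\|g\|_{C^*(G)} = \|g\|_{\Cv^2}$ for all $g \in L^1(G)$. The Pt\'ak--Raikov formula for symmetric Banach $*$-algebras gives $\|g\|_{C^*(G)}^2 = r_{L^1(G)}(g^* * g)$. Applying spectral permanence (as in the previous paragraph) to the Hermitian element $f^* * f$, I would conclude that for $f \in \II_L$,
\[|\chi(f)|^2 = \chi(f^*)\,\chi(f) = \chi(f^* * f) \leq r_{\II_L}(f^* * f) = r_{L^1(G)}(f^* * f) = \|f\|_{\Cv^2}^2.\]
Thus $\chi$ is contractive on $\II_L$ in the $\Cv^2$-norm, and extends to a bounded linear functional $\tilde\chi$ on $C_0(L)$; multiplicativity of $\tilde\chi$ follows from multiplicativity of $\chi$ on the dense subalgebra $\II_L$ together with joint continuity of the $C_0(L)$-product.

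For the final claim on topologies, the restriction map $\Delta(C_0(L)) \to \Delta(\II_L)$ is weak-$*$ continuous, injective by density, and surjective by the extension construction above. Since all characters on both sides are contractive and the weak-$*$ topology on $\Delta(C_0(L))$ is already determined by evaluations at the dense subset $\II_L$, this bijection is a homeomorphism. The main obstacle is the careful invocation of spectral permanence together with the Pt\'ak--Raikov identification of the spectral radius with the enveloping $C^*$-seminorm; once these are at hand in the (non-unital, commutative) setting of $\II_L \subseteq L^1(G)$, the extension of $\chi$ to $C_0(L)$ is automatic.
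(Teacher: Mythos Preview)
Your proof is correct and follows essentially the same route as the paper's: show each character of $\II_L$ is Hermitian, use amenability (implied by symmetry) to identify the $\Cv^2$-norm with $\sqrt{\rho(f^* * f)}$, bound $|\chi(f)|^2$ by this spectral radius, and extend by density. Two small remarks: the paper simply asserts that $\II_L$ inherits symmetry from $L^1(G)$, while you spell this out via spectral permanence for self-adjoint elements---your version is more explicit; and for the equality $r_{\II_L}(f^* * f) = r_{L^1(G)}(f^* * f)$, the spectral radius formula $r(g) = \lim \|g^n\|^{1/n}$ already gives this for any closed subalgebra, so full spectral permanence (requiring a connected resolvent set) is more than you need there.
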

\begin{proof}
Since $G$ is connected and $L^1(G)$ is symmetric, then $G$ is also amenable (see \cite{palmer_banach_2001}, Theorem~12.5.18(e)), so that
\[\|f\|_{\Cv^2} = \sqrt{\rho(f^* f)} \qquad\text{for all $f \in L^1(G)$},\]
where $\rho(f)$ denotes the spectral radius of $f$ in $L^1(G)$ (see \cite{palmer_banach_2001}, Theorem~11.4.1, and also \cite{palmer_classes_1978}, p.\ 695). Notice that, since $\II_L$ is a closed subalgebra of $L^1(G)$, for every $f \in \II_L$, the spectral radius of $f$ in $\II_L$ coincides with its spectral radius in $L^1(G)$ (see \cite{bonsall_complete_1973}, Proposition~I.5.12). Moreover, since $L^1(G)$ is symmetric, also $\II_L$ is symmetric. Hence, for every character $\psi \in \GS(\II_L)$,
\[\psi(f^*) = \overline{\psi(f)} \qquad\text{for all $f \in \II_L$;}\]
since $\psi(f)$ belongs to the spectrum of $f$ for every $f \in \II_L$, we have
\[|\psi(f)|^2 = \psi(f^* f) \leq \rho(f^* f) = \|f\|_{\Cv^2}^2.\]
This shows that every character $\psi \in \GS(\II_L)$ is continuous with respect to the norm of $C_0(L)$, so that it extends by density to a unique character of $C_0(L)$.

Notice that, since $\II_L$ is dense in $C_0(L)$ and the elements of $\GS(C_0(L))$ have norms bounded by $1$ as functionals on $C_0(L)$, it is easy to check that the topologies of $\GS(C_0(L))$ and $\GS(\II_L)$ coincide.
\end{proof}

Finally we obtain that, if $L^1(G)$ is symmetric, then the joint $L^2$ spectrum of $L_1,\dots,L_n$ is the set of eigenvalues corresponding to all the \emph{bounded joint eigenfunctions}.

\begin{prp}
If $L^1(G)$ is symmetric, then the map
\[\Lambda : \PP_L \ni \phi \mapsto \langle \cdot, \phi \rangle \in \GS(\II_L)\]
is surjective. In particular, every multiplicative linear functional on $\II_L$ extends to a bounded linear functional $\eta$ on $L^1(G)$ such that
\begin{equation}\label{eq:semimultiplicative}
\eta(f * g) = \eta(f) \eta(g) \qquad\text{for all $f \in L^1(G)$ and $g \in \II_L$.}
\end{equation}
Moreover
\[\Sigma = \left\{ \lambda \in \C^n \tc L_j \phi = \lambda_j \phi \text{ for some $\phi \in L^\infty(G) \setminus \{0\}$ and all $j=1,\dots,n$} \right\}.\]
\end{prp}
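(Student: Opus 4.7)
The plan is to leverage Theorem~\ref{thm:spectrumeigenfunctions} applied to the regular representation $\RA$ on $L^2(G)$---giving $\Sigma = \evmap_L(\PP_{L,\RA})$---together with Lemma~\ref{lem:hermitiancharacters}, which under the symmetry hypothesis extends every character of $\II_L$ uniquely to a character of $C_0(L)$, whose Gelfand spectrum is identified with $\Sigma$ via $\lambda \leftrightarrow (\breve m \mapsto m(\lambda))$ (as in the proof of Proposition~\ref{prp:Jdensity}). The three assertions of the statement are intertwined and all follow from this picture.

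For the surjectivity of $\Lambda$, given $\psi \in \GS(\II_L)$, I would extend $\psi$ via Lemma~\ref{lem:hermitiancharacters} to a character of $C_0(L)$ corresponding to some $\lambda \in \Sigma$, so that $\psi(\breve m) = m(\lambda)$ on $\Kern_L(\JJ_L)$. By Theorem~\ref{thm:spectrumeigenfunctions} there exists $\phi \in \PP_{L,\RA} \subseteq \PP_L$ with $\evmap_L(\phi) = \lambda$, and, since $\phi(e) = 1$, the second identity in \eqref{eq:eigenconvolution} yields $\Lambda(\phi) = \psi$ on the dense subalgebra $\Kern_L(\JJ_L)$ (Proposition~\ref{prp:Jdensity}), hence on all of $\II_L$ by continuity. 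For the extension claim, the functional $\eta(f) = \langle f, \phi \rangle$ on $L^1(G)$ has norm at most $\|\phi\|_\infty = 1$, restricts to $\psi$ on $\II_L$, and satisfies $\eta(f * \breve m) = \eta(f)\,\eta(\breve m)$ for $f \in L^1(G)$ and $m \in \JJ_L$ by Proposition~\ref{prp:eigenfunctions}(iii); the semimultiplicativity \eqref{eq:semimultiplicative} then extends to all $g \in \II_L$ by continuity of convolution in $L^1(G)$.

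For part~3, the inclusion $\Sigma \subseteq \{\lambda \tc L_j \phi = \lambda_j \phi \text{ for some nonzero } \phi \in L^\infty(G)\}$ is immediate: any $\phi \in \PP_{L,\RA}$ with $\evmap_L(\phi) = \lambda$ is bounded of positive type and is a joint eigenfunction (Proposition~\ref{prp:eigenfunctions}). For the reverse inclusion, let $\phi \in L^\infty(G) \setminus \{0\}$ satisfy $L_j \phi = \lambda_j \phi$ for some $\lambda \in \C^n$. Since $\phi$ is smooth and nonzero (Proposition~\ref{prp:weakstrongeigenfunctions}) and since left-invariant differential operators commute with right translations, replacing $\phi$ with a suitable right translate allows us to assume $\phi(e) \neq 0$. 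Define $\eta(f) = \langle f, \phi \rangle / \overline{\phi(e)}$ on $L^1(G)$: the second identity in \eqref{eq:eigenconvolution} gives $\eta(\breve m) = m(\lambda)$ for $m \in \JJ_L$; using $\breve m_1 * \breve m_2 = (m_1 m_2)\breve{}$ (Lemma~\ref{lem:composition}), $\eta$ is multiplicative on $\Kern_L(\JJ_L)$, and by continuity on $L^1(G)$ it defines a character of $\II_L$. Lemma~\ref{lem:hermitiancharacters} extends this to a character of $C_0(L)$ corresponding to some $\lambda' \in \Sigma$, whence $m(\lambda) = m(\lambda')$ for all $m \in \JJ_L$. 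Since $e^{-tp_j} \in \JJ_L$ for all $t > 0$ and $j=0,1,\dots,n$ (each $tp_j(L)$ is still weighted subcoercive, Lemma~\ref{lem:quasirockland}), one obtains $e^{-t(p_j(\lambda) - p_j(\lambda'))} = 1$ for all $t > 0$, forcing $p_j(\lambda) = p_j(\lambda')$ in $\C$; the relation $\lambda_j = p_j(\lambda) - p_0(\lambda)$ then yields $\lambda = \lambda' \in \Sigma$.

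The main obstacle is this last step of part~3. A priori, the eigenvalue $\lambda$ lies in $\C^n$, and one must identify it with the real point $\lambda' \in \Sigma$ produced by the character extension, despite the fact that analytic extensions of elements of $\JJ_L$ to $\C^n$ are not automatically separated by their values on $\Sigma$. Exploiting the one-parameter families $e^{-tp_j}$ of generators, rather than the single exponentials $e^{-p_j}$, removes the $2\pi i\Z$-ambiguity inherent in equalities of complex exponentials, and forces the polynomials $p_j$ themselves to agree at $\lambda$ and $\lambda'$ as complex numbers.
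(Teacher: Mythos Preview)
Your argument is essentially the paper's, with one slip and one improvement worth noting.

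The slip: left-invariant differential operators commute with \emph{left} translations $\LA_x$, not right translations. So in part~3 you should replace $\phi$ by $\LA_{x^{-1}}\phi/\phi(x)$ for some $x$ with $\phi(x)\neq 0$ (exactly as the paper does), which is still a joint eigenfunction with the same eigenvalue and now has value $1$ at $e$. Right translating would destroy the eigenfunction property in general.

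The improvement: your explicit handling of the identification $\lambda=\lambda'$ is more careful than the paper's. The paper simply asserts $\lambda=\evmap_L(\psi)$ ``by Proposition~\ref{prp:weakstrongeigenfunctions}'', which gives only $m(\lambda)=m(\evmap_L(\psi))$ for $m\in\JJ_L$; since $\lambda$ is \emph{a priori} complex while $\evmap_L(\psi)\in\R^n$, the point-separation argument of Lemma~\ref{lem:Jdensity} does not apply directly. Your trick of using the full one-parameter families $e^{-tp_j}\in\JJ_L$ (for all $t>0$) to kill the $2\pi i\Z$ ambiguity and force $p_j(\lambda)=p_j(\lambda')$ as complex numbers is a clean way to close this gap.

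Otherwise the structure---Lemma~\ref{lem:hermitiancharacters} to pass from $\GS(\II_L)$ to $\Sigma$, Theorem~\ref{thm:spectrumeigenfunctions} (or Corollary~\ref{cor:spectrumeigenfunctions}, which is what the paper cites, and which suffices since symmetry of $L^1(G)$ implies amenability) to realize each point of $\Sigma$ by some $\phi\in\PP_L$, and Proposition~\ref{prp:eigenfunctions}(iii) for the semimultiplicativity extended by density---matches the paper's proof.
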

\begin{proof}
Let $\psi \in \GS(\II_L)$. By Lemma~\ref{lem:hermitiancharacters}, $\psi$ extends to a character of $C_0(L)$, which corresponds to some $\lambda \in \Sigma$. Now, by Corollary~\ref{cor:spectrumeigenfunctions}, there exists $\phi \in \PP_L$ such that $\evmap_L(\phi) = \lambda$, therefore, for every $m \in \JJ_L$, by Proposition~\ref{prp:weakstrongeigenfunctions},
\[\Lambda(\phi)(\breve m) = \langle \breve m, \phi \rangle = m(\evmap_L(\phi)) = m(\lambda) = \psi(\breve m),\]
from which by density we deduce $\Lambda(\phi) = \psi$.

In particular, if $\eta$ denotes the linear functional $f \mapsto \langle f, \phi \rangle$ on $L^1(G)$, then $\eta$ extends $\psi$ and, by Proposition~\ref{prp:eigenfunctions},
\[\eta(f * \breve m) = \eta(f) \, \eta(\breve m) \qquad\text{for all $f \in L^1(G)$ and $m \in \JJ_L$,}\]
from which \eqref{eq:semimultiplicative} follows by density.

Finally, notice that every $\lambda \in \Sigma$ is, by Corollary~\ref{cor:spectrumeigenfunctions}, the eigenvalue corresponding to some $\phi \in \PP_L$, which is a bounded function. Vice versa, if $L_j \phi = \lambda_j \phi$ for some non-null $\phi \in L^\infty(G)$ and all $j=1,\dots,n$, then $\phi \in \E(G)$ by Proposition~\ref{prp:weakstrongeigenfunctions}; moreover, modulo replacing $\phi$ with $\LA_{x^{-1}} \phi / \phi(x)$ for some $x \in G$ with $\phi(x) \neq 0$, we may suppose that $\phi(e) = 1$. This means, again by Proposition~\ref{prp:weakstrongeigenfunctions}, that $\langle \cdot, \phi \rangle$ is a multiplicative linear functional on $\II_L$, hence $\langle \cdot, \phi \rangle = \langle \cdot, \psi \rangle$ on $\II_L$ for some $\psi \in \PP_L$, by surjectivity of $\Lambda$. Then necessarily $\lambda = \evmap_L(\psi) \in \Sigma$ by Proposition~\ref{prp:weakstrongeigenfunctions} and Corollary~\ref{cor:spectrumeigenfunctions}, since $G$ is amenable.
\end{proof}

\section{Examples}\label{section:examples}

\subsection{Homogeneous groups}\label{subsection:homogeneity}

Let $G$ be a homogeneous Lie group, with automorphic dilations $\delta_t$ and homogeneous dimension $Q_\delta$. A weighted subcoercive system $L_1,\dots,L_n$ on $G$ will be called \emph{homogeneous}\index{system of differential operators!weighted subcoercive!homogeneous} if each $L_j$ is $\delta_t$-homogeneous.

In the following, $L_1,\dots,L_n$ will be a homogeneous weighted subcoercive system, with associated Plancherel measure $\sigma$, and $r_j$ will denote the degree of homogeneity of $L_j$, i.e.,
\[\delta_t(L_j) = t^{r_j} L_j.\]
The unital subalgebra of $\Diff(G)$ generated by $L_1,\dots,L_n$ is $\delta_t$-invariant for every $t > 0$. Therefore, if we set
\[D_t f = f \circ \delta_{t^{-1}},\]
and if we denote by $\epsilon_t$ the dilations on $\R^n$ given by
\begin{equation}\label{eq:spectraldilations}
\epsilon_t(\lambda) = (t^{r_1} \lambda_1,\dots,t^{r_n} \lambda_n),
\end{equation}
then from Corollary~\ref{cor:automorphismskernel} we immediately deduce

\begin{prp}\label{prp:plancherelhomogeneous}
For every bounded Borel $m : \R^n \to \C$, we have
\[(m \circ \epsilon_t)(L) = D_t m(L) D_{t^{-1}}, \qquad (m \circ \epsilon_t)\breve{} = t^{-Q_\delta} \breve m \circ \delta_{t^{-1}}.\]
Moreover, the support $\Sigma$ of $\sigma$ is $\epsilon_t$-invariant, and
\[\sigma(\epsilon_t(A)) = t^{Q_\delta} \sigma(A)\]
for all Borel $A \subseteq \R^n$. In particular, the Plancherel measure $\sigma$ admits a ``polar decomposition'': if $S = \{\lambda \in \R^n \tc |\lambda|_\epsilon = 1\}$ for some $\epsilon_t$-homogeneous norm $|\cdot|_\epsilon$, then there exists a regular Borel measure $\tau$ on $S$ such that
\[\int_{\R^n} f \, d\sigma = \int_0^{+\infty} \int_S f(\epsilon_t(\omega)) \,d\tau(\omega) \,t^{Q_\delta - 1}\,dt.\]
\end{prp}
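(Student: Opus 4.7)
The plan is to recognize this statement as essentially the specialization of Corollary~\ref{cor:automorphismskernel} to the one-parameter group of automorphisms $\delta_t \in \Aut(G)$, followed by the standard polar decomposition for a measure homogeneous with respect to a one-parameter dilation group on $\R^n$.

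First I would check that $\delta_t$ lies in $\Aut(G;\mathcal{O})$ and identify the associated data. Since each $L_j$ is $\delta_t$-homogeneous of degree $r_j$, we have $\delta_t'(L_j) = t^{r_j} L_j \in \mathcal{O}$, so the unital algebra $\mathcal{O}$ generated by $L_1,\dots,L_n$ is $\delta_t$-invariant and the polynomial map $P_{\delta_t}$ of \S\ref{section:automorphisms} may be taken to be the linear dilation $\epsilon_t$ of \eqref{eq:spectraldilations}. The operator $T_{\delta_t}$ coincides with $D_t$ by definition, and from $\mu(\delta_s U) = s^{Q_\delta} \mu(U)$ one reads off $c(\delta_t) = t^{-Q_\delta}$. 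Corollary~\ref{cor:automorphismskernel} then immediately yields the operator identity $(m \circ \epsilon_t)(L) = D_t m(L) D_{t^{-1}}$, the kernel identity $(m \circ \epsilon_t)\breve{} = t^{-Q_\delta}\,\breve m \circ \delta_{t^{-1}}$, the invariance $\epsilon_t(\Sigma) = \Sigma$, and the scaling $\epsilon_t(\sigma) = t^{-Q_\delta}\sigma$; rewriting the latter as $\sigma(\epsilon_{t^{-1}}(A)) = t^{-Q_\delta}\sigma(A)$ and replacing $t$ by $t^{-1}$ gives the fourth claim $\sigma(\epsilon_t(A)) = t^{Q_\delta}\sigma(A)$.

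For the polar decomposition, I would first observe that $\sigma(\{0\}) = 0$: the origin is $\epsilon_t$-fixed, so the scaling law forces $\sigma(\{0\}) \in \{0,+\infty\}$, and Proposition~\ref{prp:plancherelpolynomialgrowth} rules out the infinite value. Fix any continuous $\epsilon_t$-homogeneous norm $|\cdot|_\epsilon$ on $\R^n$ (for instance $|\lambda|_\epsilon = \sum_j |\lambda_j|^{1/r_j}$); then the map $\Phi:(0,+\infty)\times S \to \R^n\setminus\{0\}$, $(t,\omega)\mapsto \epsilon_t(\omega)$, is a homeomorphism, and the scaling law implies that the push-forward of $\sigma$ via $\Phi^{-1}$ is of product form $t^{Q_\delta-1}\,dt \otimes d\tau(\omega)$ for a unique regular Borel measure $\tau$ on $S$. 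Concretely I would define $\tau(E) = Q_\delta\,\sigma(\Phi((0,1]\times E))$, verify the product formula on the generating family of shells $\Phi((a,b]\times E)$ directly from the scaling law via a change-of-variable $t \mapsto a t$, and extend to arbitrary nonnegative Borel $f$ by monotone convergence.

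The only step that involves content beyond unwinding Corollary~\ref{cor:automorphismskernel} is the construction of $\tau$ and the verification of the integral formula, which is the standard disintegration argument for a dilation-homogeneous measure; I do not expect a genuine obstacle here, since polynomial growth of $\sigma$ at infinity (Proposition~\ref{prp:plancherelpolynomialgrowth}) together with the compactness of $S$ ensures that $\tau$ so defined is a locally finite Borel measure on $S$.
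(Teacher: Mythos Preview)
Your proposal is correct and follows exactly the approach the paper takes: the paper simply states that the proposition is deduced immediately from Corollary~\ref{cor:automorphismskernel}, and you have accurately identified the relevant data $P_{\delta_t}=\epsilon_t$, $T_{\delta_t}=D_t$, $c(\delta_t)=t^{-Q_\delta}$ and carried out the translation. Your treatment of the polar decomposition is in fact more detailed than the paper's, which leaves that step entirely to the reader.
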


In the context of homogeneous groups, an equivalent characterization of homogeneous weighted subcoercive systems can be given, which is analogous to the definition of Rockland operator.

\begin{thm}
Let $L_1,\dots,L_n \in \Diff(G)$ be homogeneous, pairwise commuting and formally self-adjoint.
\begin{itemize}
\item[(i)] If $L_1,\dots,L_n$ is a weighted subcoercive system, then the algebra generated by $L_1,\dots,L_n$ contains a Rockland operator if and only if the degrees of homogeneity of $L_1,\dots,L_n$ have a common multiple.

\item[(ii)] $L_1,\dots,L_n$ is a weighted subcoercive system if and only if, for every non-trivial irreducible unitary representation $\pi$ of $G$ on a Hilbert space $\HH$, the operators $d\pi(L_1), \dots, d\pi(L_n)$ are \emph{jointly injective} on $\HH^\infty$, i.e.,
\[d\pi(L_1) v = \dots = d\pi(L_n) v = 0 \qquad\Longrightarrow\qquad v = 0\]
for all $v \in \HH^\infty$.
\end{itemize}
\end{thm}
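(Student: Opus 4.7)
My plan is to establish (ii) first as the core representation-theoretic characterization, and then deduce (i) from (ii) via explicit homogeneous constructions.

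For the forward direction of (ii), assume $L_1,\dots,L_n$ is a weighted subcoercive system and let $\pi$ be a non-trivial irreducible unitary representation with $v\in\HH^\infty$ in the joint kernel of all $d\pi(L_j)$. By pairwise commutativity and Corollary~\ref{cor:commutativealgebra} one has $d\pi(q(L))v=q(0)\,v$ for every polynomial $q$, and the matrix coefficient $\phi(x)=\langle\pi(x)v,v\rangle$ is a bounded smooth joint eigenfunction of positive type with eigenvalue $0$ (Propositions~\ref{prp:weakstrongeigenfunctions} and~\ref{prp:eigenfunctions}). Take $p_*(L)\in\Alg$ weighted subcoercive and normalize so that $p_*(0)=0$; then $\phi*k_t=\phi$ for the heat kernel $k_t$ of $p_*(L)$ and every $t>0$. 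I would then establish a Liouville-type statement for this weighted subcoercive equation on $G$: using the $\delta_s$-homogeneity of the $L_j$ together with the Gaussian estimates and semigroup property of $k_t$ (Theorem~\ref{thm:robinsonterelst}(d)--(g)), one combines the concentration $\phi\circ\delta_s\to\phi(e)$ as $s\to 0^+$ with the smoothing identity $\phi*k_t=\phi$ to conclude that $\phi$ is constant. Consequently $\pi(x)v=v$ for every $x\in G$, so the span of $v$ is a one-dimensional $\pi$-invariant subspace on which $\pi$ acts trivially, contradicting the non-triviality and irreducibility of $\pi$.

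For the reverse direction of (ii), assume joint injectivity. If the $r_j$ share a common multiple $d$, I would use the operator $R=\sum_j L_j^{2k_j}$ with $k_j=d/r_j$ constructed in the proof of (i)$\Leftarrow$ below: by polarization and joint injectivity, $R$ is a positive Rockland operator in $\Alg$, hence weighted subcoercive. If no common multiple exists, I would instead pick a reduced weighted basis of $\lie{g}$ not tied to the $\delta_t$-structure (for instance, a Jordan-H\"older basis with unit weights), under which a suitable polynomial combination of the $L_j$ (such as $L_1+\dots+L_n$, or a higher power thereof with corrective lower-order terms) acquires a well-defined homogeneous principal part; joint injectivity then supplies the representation-theoretic positivity required by condition~(iv) of Theorem~\ref{thm:robinsonterelst}.

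Part~(i) then follows from (ii). For (i)$\Leftarrow$, given a common multiple $d$ with $k_j=d/r_j$, the operator $R=\sum_j L_j^{2k_j}$ is positive, formally self-adjoint, and $\delta_t$-homogeneous of degree $2d$. If $d\pi(R)v=0$ for some $v\in\HH^\infty$ in a non-trivial irreducible $\pi$, then $\sum_j\|d\pi(L_j^{k_j})v\|^2=\langle d\pi(R)v,v\rangle=0$, so $d\pi(L_j^{k_j})v=0$; essential self-adjointness of $d\pi(L_j)$ and the spectral theorem give $d\pi(L_j)v=0$ for every $j$, and then (ii) forces $v=0$, so $R$ is Rockland. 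For (i)$\Rightarrow$, any Rockland $R\in\Alg$ of $\delta_t$-degree $d$ expands as $R=\sum_\alpha c_\alpha L^\alpha$ with $\sum_j\alpha_j r_j=d$ for each supporting multi-index $\alpha$. To upgrade this to $r_j\mid d$ for every $j$ individually, I would for each fixed $j_0$ produce a non-trivial irreducible representation $\pi$ on which $d\pi(L_j)=0$ for $j\ne j_0$ and $d\pi(L_{j_0})\ne 0$, obtained via an appropriate quotient of $G$ in which only the $L_{j_0}$-direction survives; the Rockland injectivity of $d\pi(R)$ on such a $\pi$ then forces the defining polynomial to contain at least one pure monomial $L_{j_0}^{k}$ with $k\ge 1$, whence $kr_{j_0}=d$.

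The main obstacle is the forward direction of (ii): extracting the Liouville-type conclusion requires a careful combination of dilation-driven concentration with heat-kernel smoothing, leveraging the Gaussian estimates of Theorem~\ref{thm:robinsonterelst}(e). The secondary difficulty is the construction of the coordinate-singling representations in (i)$\Rightarrow$, for which structural input on the unitary dual of homogeneous Lie groups is likely needed.
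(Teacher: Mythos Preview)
Your proposal has a genuine gap in the forward direction of (ii), and your approaches to the remaining pieces diverge from the paper's in ways that introduce unnecessary difficulties.

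\textbf{Forward direction of (ii).} Your Liouville-type argument is the weak point. You have $\phi*k_t=\phi$ and $\phi\circ\delta_s\to\phi(e)$, but you never explain how these combine to force constancy, and it is not clear that they do: the Gaussian estimates of Theorem~\ref{thm:robinsonterelst}(e) carry an $e^{\omega t}$ factor, so letting $t\to\infty$ is not available, and $p_*(L)$ is merely weighted subcoercive, not Rockland, so no scaling of $k_t$ under $\delta_s$ is at hand. The paper bypasses eigenfunctions entirely. The key device you are missing is to \emph{dilate the representation}: for each $t>0$, $\pi_t=\pi\circ\delta_t$ is again unitary, and the a~priori estimate of Theorem~\ref{thm:robinsonterelst}(iii),
\[
\|d\pi_t(X_k)v\|^2\le C\bigl(\|v\|^2+\|d\pi_t(p(L))v\|^2\bigr),
\]
holds with the \emph{same} constant $C$. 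Since $d\pi_t(L_j)=t^{r_j}d\pi(L_j)$ annihilates $v$, one gets $d\pi_t(p(L))v=p(0)v$, while $d\pi_t(X_k)v=t^{\nu_k}d\pi(X_k)v$. Letting $t\to\infty$ forces $d\pi(X_k)v=0$ for every generator $X_k$ of $\lie g$, hence $\pi(x)v=v$ and $v=0$ by irreducibility. This is a two-line argument once the right inequality is identified; no heat kernels are needed.

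\textbf{Reverse direction of (ii) and (i)$\Rightarrow$.} When the $r_j$ have a common multiple your construction of $R=\sum_j L_j^{2k_j}$ matches the paper. When they do not, your plan to pick an unrelated weighted basis and hope some polynomial in the $L_j$ acquires a Rockland principal part on the contraction is not justified: joint injectivity is a condition on representations of $G$, not of $G_*$, and there is no mechanism linking the two in your sketch. The paper instead invokes Miller's result that one may pass to another homogeneous structure on $G$ with \emph{rational} (hence commensurable) degrees, with respect to which the $L_j$ remain homogeneous; the common-multiple construction then applies verbatim. Similarly, for (i)$\Rightarrow$ the paper again cites Miller: the existence of any Rockland operator on $G$ forces the dilation eigenvalues on $\lie g$ to be commensurable, whence the $r_j$ (being integer combinations of them) are commensurable too. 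Your alternative of manufacturing, for each $j_0$, an irreducible $\pi$ with $d\pi(L_j)=0$ for $j\ne j_0$ but $d\pi(L_{j_0})\ne 0$ may simply be impossible (consider $L_1=L_2^2$), and in any case requires structural control of the unitary dual that is much harder than the cited result.
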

\begin{proof}
Suppose that $L_1,\dots,L_n$ is a weighted subcoercive system. Let $p$ be a real polynomial such that $p(L) = p(L_1,\dots,L_n)$ is a weighted subcoercive operator. Choose moreover a system $X_1,\dots,X_d$ of generators of $\lie{g}$ made of $\delta_t$-homogeneous elements, so that $\delta_t(X_k) = t^{\nu_k} X_k$ for some $\nu_k > 0$. From Theorem~\ref{thm:robinsonterelst}(iii) we deduce that, possibly by replacing $p$ with some power $p^m$, there exist a constant $C > 0$ such that, for every unitary representation $\pi$ of $G$ on a Hilbert space $\HH$,
\begin{equation}\label{eq:apriori}
\|d\pi(X_k) v\|^2 \leq C ( \|v\|^2 + \|d\pi(p(L)) v\|^2 )
\end{equation}
for $v \in \HH^\infty$, $k=1,\dots,d$. Fix a non-trivial irreducible unitary representation $\pi$ of $G$ on a Hilbert space $\HH$, and let $v \in \HH^\infty$ be such that
\[d\pi(L_1) v = \dots = d\pi(L_n) v = 0.\]
For $t > 0$, since $\delta_t \in \Aut(G)$, $\pi_t = \pi \circ \delta_t$ is also a unitary representation of $G$; moreover, it is easily checked that smooth vectors for $\pi_t$ coincide with smooth vectors for $\pi$, and that
\[d\pi_t(D) = d\pi(\delta_t(D)) \qquad\text{for every $D \in \Diff(G)$.}\]
In particular,
\[d\pi_t(p(L)) v = d\pi((p \circ \epsilon_t)(L)) v = p(0) v,\]
thus from \eqref{eq:apriori} applied to the representation $\pi_t$ we get
\[\|d\pi(X_k) v\|^2 \leq t^{-2\nu_k} C (1 + |p(0)|^2) \|v\|^2,\]
and, for $t \to +\infty$, we obtain
\[d\pi(X_1) v = \dots = d\pi(X_d) v = 0.\]
Since $X_1,\dots,X_d$ generate $\lie{g}$, this means that the function $x \mapsto \pi(x) v$ is constant, i.e.,
\[\pi(x) v = v \qquad\text{for all $x \in G$,}\]
but $\pi$ is irreducible and non-trivial, thus $v = 0$.

Suppose now conversely that $d\pi(L_1),\dots,d\pi(L_n)$ are jointly injective on $\HH^\infty$ for every non-trivial irreducible representation $\pi$ on a Hilbert space $\HH$, and that moreover the degrees $r_1,\dots,r_n$ of homogeneity of $L_1,\dots,L_n$ have a common multiple $M$. Then
\[\Delta = L_1^{2M/r_1} + \dots + L_n^{2M/r_n}\]
is homogeneous of degree $2M$ and belongs to the subalgebra of $\Diff(G)$ generated by $L_1,\dots,L_n$. Moreover, for every irreducible unitary representation $\pi$ of $G$ on $\HH$, and for every $v \in \HH^\infty$, we have
\[\langle d\pi(\Delta) v, v \rangle = \|d\pi(L_1)^{M/r_1} v\|^2_\HH + \dots + \|d\pi(L_n)^{M/r_n} v\|^2_\HH,\]
so that, if $d\pi(\Delta) v = 0$, then also $d\pi(L_j) v = 0$ for $j=1,\dots,n$, therefore $v = 0$. This proves that $\Delta$ is a (positive) Rockland operator, and in particular it is weighted subcoercive, so that $L_1,\dots,L_n$ is a weighted subcoercive system.

If instead $d\pi(L_1),\dots,d\pi(L_n)$ are jointly injective for every non-trivial irreducible representation $\pi$, but the degrees of homogeneity of $L_1,\dots,L_n$ do not have a common multiple, by the results of \cite{miller_parametrices_1980} (see in particular Proposition~1.1 and its proof), we can find another homogeneous structure on $G$ with integral degrees, with respect to which the operators $L_1,\dots,L_n$ are still homogeneous. In particular the degrees of homogeneity of $L_1,\dots,L_n$ in this new structure must have a common multiple, so that, by the previous part of the proof, $L_1,\dots,L_n$ is again a weighted subcoercive system, and this last notion is independent of the homogeneous structure.

Finally, if the algebra generated by $L_1,\dots,L_n$ contains a Rockland operator, then (see \cite{miller_parametrices_1980}, Proposition~1.3; see also \cite{ter_elst_spectral_1997}) the homogeneity degrees of the elements of $\lie{g}$ must have a common multiple, and \emph{a fortiori} this is true also for the degrees of $L_1,\dots,L_n$.
\end{proof}

Notice that, while the existence of a Rockland operator on $G$ forces the homogeneity degrees of $\lie{g}$ to have a common multiple, this is not the case for the existence of a homogeneous weighted subcoercive system. For instance, the system of the partial derivatives $-i\partial_1,\dots,-i\partial_n$ on $\R^n$ is a homogeneous weighted subcoercive system with respect to any family of dilations of the form
\[\delta_t(x_1,\dots,x_n) = (t^{\lambda_1} x_1,\dots,t^{\lambda_n} x_n)\]
for $\lambda_1,\dots,\lambda_n \in \left[1,+\infty\right[$.

\subsection{Direct products}\label{section:directproducts}
In order to have a system of commuting operators, the simplest way is to start from operators living on different Lie groups, and then to consider them as operators on the direct product of the groups. Here we show that the notion of weighted subcoercive system is compatible with this construction, in the sense that weighted subcoercive systems on different groups can be put together in a single weighted subcoercive system on the direct product.

For $l = 1,\dots,\ell$, let $G_l$ be a connected Lie group, and set
\[G^\times = G_1 \times \dots \times G_\ell.\]
We then have the identification
\[\lie{g}^\times = \lie{g}_1 \oplus \dots \oplus \lie{g}_\ell.\]
Moreover, for $l=1,\dots,\ell$, if $D \in \Diff(G_l)$ and $D^\times$ is the image of $D$ via the derivative of the canonical inclusion $G_l \to G^\times$, then
\[D^\times (f_1 \otimes \cdots \otimes f_\ell) = f_1 \otimes \cdots \otimes f_{l-1} \otimes (D f_l) \otimes f_{l+1} \otimes \cdots \otimes f_\ell;\]
in this case, we say that $D^\times$ is the differential operator \emph{along the $l$-th factor} of $G^\times$ corresponding to $D \in \Diff(G_l)$.

\begin{lem}\label{lem:concatweighted}
For $l=1,\dots,\ell$, suppose that $A_{l,1},\dots,A_{l,d_l}$ is a reduced basis of $\lie{g}_l$, with weights $w_{l,1},\dots,w_{l,d_l}$. Then
\begin{equation}\label{eq:concatweighted}
A_{1,1},\dots,A_{1,d_1},\dots,A_{\ell,1},\dots,A_{\ell,d_\ell}
\end{equation}
is a reduced basis of $\lie{g}^\times$, with weights
\[w_{1,1},\dots,w_{1,d_1},\dots,w_{\ell,1},\dots,w_{\ell,d_\ell}.\]
Moreover, if $(V_{l,\lambda})_\lambda$ is the filtration on $\lie{g}_l$ corresponding to the chosen reduced basis for $l=1,\dots,\ell$, then
\[V^\times_\lambda = V_{1,\lambda} \oplus \dots \oplus V_{\ell,\lambda}\]
gives the filtration on $\lie{g}^\times$ corresponding to the algebraic basis \eqref{eq:concatweighted}; therefore, by passing to the quotients, we obtain for the contractions
\[(\lie{g}^\times)_* = (\lie{g}_1)_* \oplus \dots \oplus (\lie{g}_\ell)_*.\]
\end{lem}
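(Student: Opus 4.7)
The plan is to exploit the fact that, in a direct product Lie algebra, brackets between elements of different factors vanish, which makes every structural quantity associated with the concatenated basis split as a direct sum of the corresponding quantities for each factor.

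First I would verify that the concatenated system is a weighted algebraic basis of $\lie{g}^\times$. Linear independence is immediate from the direct sum decomposition $\lie{g}^\times = \lie{g}_1 \oplus \dots \oplus \lie{g}_\ell$. For the generation property, note that $[\lie{g}_l, \lie{g}_{l'}] = 0$ whenever $l \neq l'$, so the Lie subalgebra of $\lie{g}^\times$ generated by the $A_{l,j}$ $(1 \leq j \leq d_l)$ contains each $\lie{g}_l$ (because $A_{l,1},\dots,A_{l,d_l}$ generate $\lie{g}_l$), and thus contains their sum $\lie{g}^\times$.

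Next I would compute the filtration. The key observation is that an iterated bracket $A^\times_{[\alpha]} = [[\dots[A^\times_{\alpha_1},A^\times_{\alpha_2}],\dots],A^\times_{\alpha_k}]$ vanishes unless all the indices $\alpha_1,\dots,\alpha_k$ correspond to basis elements belonging to a single factor $\lie{g}_l$; this follows by induction on $k$, using $[\lie{g}_l,\lie{g}_{l'}]=0$ for $l\neq l'$ at each bracketing step. Consequently
\[
F^\times_\lambda = \Span\{A^\times_{[\alpha]} \tc \alpha\in J_+(d_1+\dots+d_\ell),\ \|\alpha\|\leq \lambda\} = \bigoplus_{l=1}^\ell V_{l,\lambda},
\]
which is the claimed identity $V^\times_\lambda = V_{1,\lambda} \oplus \dots \oplus V_{\ell,\lambda}$.

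Reducedness \eqref{eq:reducedbasis} of the concatenated basis then follows directly: by the previous step $(V^\times_\lambda)^- = \bigoplus_l V_{l,\lambda}^-$, and any element of $\Span\{A_{l,j} \tc w_{l,j} = \lambda\}$ decomposes uniquely as $\sum_l v_l$ with $v_l \in \Span\{A_{l,j} \tc w_{l,j} = \lambda\} \subseteq \lie{g}_l$; if the sum lies in $(V^\times_\lambda)^-$, then each component $v_l$ lies in $V_{l,\lambda}^-$, forcing $v_l = 0$ by reducedness of the $l$-th basis, and hence all coefficients vanish. Finally, the statement about contractions is a purely linear-algebraic consequence: passing to the graded quotients,
\[
W^\times_\lambda = V^\times_\lambda / (V^\times_\lambda)^- = \bigoplus_{l=1}^\ell V_{l,\lambda} / V_{l,\lambda}^- = \bigoplus_{l=1}^\ell W_{l,\lambda},
\]
and summing over $\lambda$ yields $(\lie{g}^\times)_* = (\lie{g}_1)_* \oplus \dots \oplus (\lie{g}_\ell)_*$, with the bracket structure matching since all cross-factor brackets in $\lie{g}^\times$ (and hence in the associated graded) are zero.

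The only point requiring any care is the inductive vanishing of mixed-factor iterated brackets used in computing the filtration; everything else is a direct-sum bookkeeping argument. No serious obstacle is expected.
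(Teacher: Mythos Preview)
Your proposal is correct and follows essentially the same approach as the paper: the key step in both is the observation (proved by induction on the length of the commutator) that an iterated bracket of the concatenated basis elements is nonzero only if all entries come from a single factor $\lie{g}_l$, from which the filtration identity and the remaining claims follow. Your write-up simply makes explicit the generation, reducedness, and contraction steps that the paper leaves as immediate consequences.
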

\begin{proof}
An iterated commutator $A_{[\alpha]}$ of the elements of \eqref{eq:concatweighted} is not null only if it coincides with an iterated commutator $(A_l)_{[\alpha']}$ of $A_{l,1},\dots,A_{l,n_l}$ for some $l \in \{1,\dots,\ell\}$. This can be easily checked by induction on the length $|\alpha|$ of the commutator. The identities involving the filtrations then follow immediately, from which we get easily the conclusion.
\end{proof}

\begin{thm}\label{thm:productwsub}
Suppose that $D_l \in \Diff(G_l)$ is a self-adjoint weighted subcoercive operator on $G_l$, for $l = 1,\dots,\ell$, and let $D_l^\times \in \Diff(G^\times)$ be the differential operator on $G^\times$ along the $l$-th factor corresponding to $D_l$. Then
\[D = (D_1^\times)^2 + \dots + (D_\ell^\times)^2\]
is a positive weighted subcoercive operator on $G^\times$.
\end{thm}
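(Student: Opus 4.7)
I would verify criterion (ii) of Theorem~\ref{thm:robinsonterelst}: exhibit a weighted structure on $\lie{g}^\times$ in which $D$ corresponds to a form whose principal part $P$ satisfies that $d\RA_{(G^\times)_*}(P+P^+)$ is a positive Rockland operator on $(G^\times)_*$. For each $l$, since $D_l = D_l^+$ is weighted subcoercive, I would fix a weighted structure on $\lie{g}_l$ and a form $C_l$ of degree $m_l$ with $d\RA_{G_l}(C_l) = D_l$; replacing $C_l$ by $(C_l + C_l^+)/2$, we may assume $C_l = C_l^+$, so the principal part $P_l$ satisfies $P_l = P_l^+$. Picking a common multiple $M$ of the $m_l$ and rescaling the weights of $\lie{g}_l$ by the factor $M/m_l \geq 1$ (which preserves $m_l/w_{l,i} \in 2\N$ and keeps weights $\geq 1$), every $C_l$ has degree $M$. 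Concatenating the resulting reduced bases via Lemma~\ref{lem:concatweighted} produces a reduced weighted basis of $\lie{g}^\times$ whose contraction is $(\lie{g}^\times)_* = \bigoplus_l (\lie{g}_l)_*$.

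\textbf{Identifying the form for $D$.} The natural candidate is $C = \sum_l (C_l^\times)^2$, where $C_l^\times$ denotes $C_l$ read in the letters of the $l$-th factor. One checks immediately that $d\RA_{G^\times}(C) = \sum_l (D_l^\times)^2 = D$, that $\deg C = 2M$, and that $(2M)/w_i = 2(m_l/w_{l,i}) \in 2\N$ for every concatenated weight $w_i$ coming from $\lie{g}_l$. The principal part of $C$ is $P = \sum_l (P_l^\times)^2$, and from $P_l = P_l^+$ it follows that $P^+ = P$, so $P + P^+ = 2P$. Writing $R_l = d\RA_{(G_l)_*}(P_l)$, the hypothesis that $D_l$ is weighted subcoercive together with Theorem~\ref{thm:robinsonterelst}(ii) shows that $2R_l$ (and hence $R_l$) is a positive Rockland operator on $(G_l)_*$; thus the remaining task is to show that $\sum_l (R_l^\times)^2$ is a positive Rockland operator on $(G^\times)_*$.

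\textbf{Main obstacle: the Rockland condition on the product.} Formal self-adjointness and positivity of $\sum_l (R_l^\times)^2$ are immediate from the sum-of-squares form with self-adjoint summands, and homogeneity of degree $2M$ follows from the construction. The delicate point is joint injectivity on smooth vectors for non-trivial irreducible unitary representations $\pi$ of $(G^\times)_* = \prod_l (G_l)_*$. Here I would use the Mackey-type theorem for direct products (valid since nilpotent Lie groups are type~I) to write $\pi = \pi_1 \otimes \cdots \otimes \pi_\ell$ with $\pi_l$ irreducible and some $\pi_{l_0}$ non-trivial. For $v \in \HH^\infty$ with $d\pi(\sum_l (R_l^\times)^2) v = 0$, pairing with $v$ yields $\sum_l \|d\pi(R_l^\times) v\|^2 = 0$, so $d\pi(R_{l_0}^\times) v = 0$. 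Decomposing $\HH = \HH_{\pi_{l_0}} \hat\otimes \Kern$ with $\Kern = \hat\bigotimes_{l \neq l_0} \HH_{\pi_l}$, the partial inner product $v_w = \langle v, w \rangle_\Kern \in \HH_{\pi_{l_0}}$ is smooth for $\pi_{l_0}$ (since $d\pi_{l_0}(X_1 \cdots X_k) v_w = \langle d\pi((X_1 \cdots X_k)^\times) v, w \rangle_\Kern$ for $X_j \in \lie{g}_{l_0}$, and the right-hand side belongs to $\HH_{\pi_{l_0}}$ because $v$ is smooth), and satisfies $d\pi_{l_0}(R_{l_0}) v_w = 0$; the Rockland property of $R_{l_0}$ then forces $v_w = 0$ for all $w$, hence $v = 0$. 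Once this is established, Theorem~\ref{thm:robinsonterelst} gives that $C$ is weighted subcoercive on $G^\times$, and positivity of $D = \sum_l (D_l^\times)^2$ is manifest.
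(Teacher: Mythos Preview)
Your proof is correct and follows essentially the same route as the paper: the same rescaled concatenated weighted structure via Lemma~\ref{lem:concatweighted}, the same form $C = \sum_l (C_l^\times)^2$ with principal part $P = \sum_l (P_l^\times)^2$, and the same verification of the Rockland condition on $(G^\times)_* = \prod_l (G_l)_*$ by decomposing an irreducible as a tensor product $\pi_1 \otimes \cdots \otimes \pi_\ell$. The only cosmetic difference is in the final injectivity step, where the paper expands $v$ in an orthonormal basis of each $\HH_l$ to express $\langle d\pi(P)v,v\rangle$ as a sum of norms, while you phrase the same computation in terms of partial inner products $v_w$; these are equivalent formulations of the same argument.
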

\begin{proof}
For $l=1,\dots,\ell$, let $A_{l,1},\dots,A_{l,d_l}$ be a reduced basis of $\lie{g}_l$, such that, for some self-adjoint weighted subcoercive form $C_l$, we have $D_l = dR_{G_l}(C_l)$; let moreover $P_l$ be the principal part of $C_l$. Clearly, modulo rescaling the weights of the reduced bases, we may suppose that the forms $C_1,\dots,C_\ell$ have the same degree $m$.

By Lemma~\ref{lem:concatweighted}, the concatenation of the bases of $\lie{g}_1,\dots,\lie{g}_l$ gives a reduced basis \eqref{eq:concatweighted} of $\lie{g}^\times$. We can then consider, for $l=1,\dots,\ell$, the forms $C_l^\times$, $P_l^\times$ corresponding to $C_l$, $P_l$ but re-indexed on the basis \eqref{eq:concatweighted}. In particular, if
\[C = (C_1^\times)^2 + \dots + (C_\ell^\times)^2, \qquad P = (P_1^\times)^2 + \dots + (P_\ell^\times)^2,\]
then $P = P^+$ is the principal part of $C$, and moreover
\[d\RA_{G^\times}(C) = (d\RA_{G_1}(C_1)^\times)^2 + \dots + (d\RA_{G_\ell}(C_\ell)^\times)^2 = D.\]

On the other hand, again by Lemma~\ref{lem:concatweighted}, we have the identification
\[(G^\times)_* = (G_1)_* \times \dots \times (G_\ell)_*,\]
so that
\[d\RA_{(G^\times)_*}(P) = (d\RA_{(G_1)_*}(P_1)^\times)^2 + \dots + (d\RA_{(G_\ell)_*}(P_\ell)^\times)^2.\]
By Theorem~\ref{thm:robinsonterelst}, we have that $d\RA_{(G_l)_*}(P_l)$ is Rockland on $(G_l)_*$ for $l=1,\dots,\ell$; in order to conclude, it is sufficient to show that $dR_{(G^\times)_*}(P)$ is Rockland on $(G^\times)_*$.

If $\pi$ is a non-trivial irreducible unitary representation of $G^\times$ on a Hilbert space $\HH$, then (see \cite{folland_course_1995}, Theorem~7.25) we may suppose that $\pi = \pi_1 \otimes \dots \otimes \pi_\ell$, where $\pi_l$ is an irreducible unitary representation of $G_l$ on a Hilbert space $\HH_l$ for $l=1,\dots,\ell$, so that $\HH = \HH_1 \mathop{\hat\otimes} \cdots \mathop{\hat\otimes} \HH_\ell$ and at least one of $\pi_1,\dots,\pi_\ell$ is non-trivial. Let $(w_{l,\nu_l})_{\nu_l}$ be a complete orthonormal system for $\HH_l$, for $l=1,\dots,\ell$, so that $(w_{1,\nu_1} \otimes \dots \otimes w_{\ell,\nu_{\ell}})_{\vec{\nu}}$ is a complete orthonormal system for $\HH$. Then, for every element $v = \sum_{\nu_1,\dots,\nu_\ell} a_{\nu_1,\dots,\nu_\ell} w_{1,\nu_1} \otimes \dots \otimes w_{\ell,\nu_\ell}$ of $\HH$, we have
\begin{multline*}
\langle d\pi(d\RA_{(G^\times)_*}(P)) v, v \rangle_{\HH} \\
= \sum_{l=1}^\ell \sum_{\nu_1,\dots,\nu_{l-1},\nu_{l+1},\nu_\ell} \left\| d\pi_l(d\RA_{(G_l)_*}(P_l))\left( \sum_{\nu_l} a_{\nu_1,\dots,\nu_\ell} w_{l,\nu_l} \right)\right\|^2_{\HH_l};
\end{multline*}
since at least one of the $d\pi_l(d\RA_{(G_l)_*}(P_l))$ is injective (being $d\RA_{(G_l)_*}(P_l)$ Rockland and $\pi_l$ non-trivial), this formula gives easily that
\[v \neq 0 \qquad\Longrightarrow\qquad d\pi(d\RA_{(G^\times)_*}(P)) v \neq 0,\]
i.e., $d\pi(d\RA_{(G^\times)_*}(P))$ is injective.
\end{proof}

Theorems~\ref{thm:productwsub} and \ref{thm:plancherel}, together with the properties of the spectral integral, yield easily

\begin{cor}\label{prp:productfunctional}
For $l=1,\dots,\ell$, let $L_{l,1},\dots,L_{l,n_l} \in \Diff(G_l)$ be a weighted subcoercive system. Let moreover $L_{l,j}^\times$ be the differential operator on $G^\times$ along the $l$-th factor corresponding to $L_{l,j}$. Then
\begin{equation}\label{eq:concatsystem}
L_{1,1}^\times,\dots,L_{1,n_1}^\times,\dots,L_{\ell,1}^\times,\dots,L_{\ell,n_\ell}^\times
\end{equation}
is a weighted subcoercive system on $G^\times$. Further:
\begin{itemize}
\item[(a)] if $m_l$ is a bounded Borel function on $\R^{n_l}$ for $l=1,\dots,\ell$, then
\[\Kern_{L^\times} m = \Kern_{L_1} m_1 \otimes \dots \otimes \Kern_{L_\ell} m_\ell;\]
\item[(b)] if $\sigma_l$ is the Plancherel measure associated with the system $L_{l,1},\dots,L_{l,n_l}$ for $l=1,\dots,\ell$, and if moreover $\sigma^\times$ is the Plancherel measure associated with the system \eqref{eq:concatsystem}, then
\[\sigma^\times = \sigma_1 \times \dots \times \sigma_\ell.\]
\end{itemize}
\end{cor}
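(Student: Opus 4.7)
The plan is to establish the weighted subcoercive property of the concatenated system first, and then to deduce both (a) and (b) from the tensor product structure of the joint spectral resolution.

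First I would verify that the system \eqref{eq:concatsystem} meets the definition of a weighted subcoercive system on $G^\times$. The operators are formally self-adjoint, since the right Haar measure on a direct product is the product of the factor-wise right Haar measures, so that formal adjunction commutes with the construction $D \mapsto D^\times$. They pairwise commute: two operators along the same factor commute by hypothesis, while two operators along different factors commute because they act on independent tensor components. To exhibit a weighted subcoercive operator in the unital algebra generated by the $L_{l,j}^\times$, I would choose for each $l$ a real polynomial $q_l$ such that $D_l = q_l(L_{l,1},\dots,L_{l,n_l})$ is a formally self-adjoint weighted subcoercive operator on $G_l$ (using a symmetrization as in the proof of Proposition~\ref{prp:commutativealgebra}), and then apply Theorem~\ref{thm:productwsub} to obtain that
\[ D = (D_1^\times)^2 + \cdots + (D_\ell^\times)^2 \]
is weighted subcoercive on $G^\times$. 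This $D$ plainly belongs to the unital subalgebra generated by \eqref{eq:concatsystem}.

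Next I would apply Corollary~\ref{cor:commutativealgebra} to obtain joint spectral resolutions $E^\times$ of the concatenated system on $L^2(G^\times)$ and $E_l$ of $L_{l,1},\dots,L_{l,n_l}$ on $L^2(G_l)$. Identifying $L^2(G^\times)$ with $L^2(G_1) \htimes \cdots \htimes L^2(G_\ell)$, each $L_{l,j}^\times$ acts as $I \otimes \cdots \otimes L_{l,j} \otimes \cdots \otimes I$. Consequently the product spectral measure $E_1 \otimes \cdots \otimes E_\ell$ is itself a joint spectral resolution for the system \eqref{eq:concatsystem}, and by the uniqueness part of Corollary~\ref{cor:commutativealgebra} it coincides with $E^\times$. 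For bounded Borel $m = m_1 \otimes \cdots \otimes m_\ell$ this gives
\[ m(L^\times) = m_1(L_1) \otimes \cdots \otimes m_\ell(L_\ell), \]
and passing to convolution kernels on $G^\times$ (which distribute over tensor products of left-invariant bounded operators on $L^2$) yields part (a).

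For part (b), I would combine part (a) with Theorem~\ref{thm:plancherel}. Taking $m = \chr_{A_1 \times \cdots \times A_\ell}$ in part (a) and squaring $L^2(G^\times)$ norms, I obtain $\sigma^\times(A_1 \times \cdots \times A_\ell) = \prod_l \sigma_l(A_l)$ for all relatively compact Borel rectangles (whose $\sigma^\times$-measure is finite by Proposition~\ref{prp:plancherelpolynomialgrowth}); since both $\sigma^\times$ and $\sigma_1 \times \cdots \times \sigma_\ell$ are regular Borel measures on $\R^{n_1 + \cdots + n_\ell}$ that agree on a generating $\pi$-system of finite-measure sets, they coincide.

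The main point of care is the identification of $E^\times$ with the product spectral measure $E_1 \otimes \cdots \otimes E_\ell$: it rests on the uniqueness of the joint spectral resolution, which in turn relies on the essential self-adjointness on $\HH^\infty$ and strong commutativity furnished by Proposition~\ref{prp:commutativealgebra}. Once this identification is in hand, the rest of the argument is a routine unwinding of tensor products and the Plancherel identity.
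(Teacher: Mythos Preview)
Your proposal is correct and follows the same route the paper indicates: the paper's proof is a single sentence stating that the result follows easily from Theorems~\ref{thm:productwsub} and \ref{thm:plancherel} together with the properties of the spectral integral, and your write-up simply unpacks these three ingredients in the natural way. The one point worth noting is that Corollary~\ref{cor:commutativealgebra} does not state uniqueness of the joint spectral resolution explicitly, but you correctly trace this back to the essential self-adjointness and strong commutativity in Proposition~\ref{prp:commutativealgebra}, so there is no gap.
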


\subsection{Gelfand pairs}\label{subsection:gelfandpairs}

Let $G$ be a connected Lie group. In this paragraph, we describe a particular way of obtaining weighted subcoercive systems on $G$, which has been extensively studied in the literature.

Let $K$ be a compact subgroup of $\Aut(G)$. A function (or distribution) $f$ on $G$ is said to be \emph{$K$-invariant} if
\[T_k f = f \qquad\text{for all $k \in K$.}\]
We add a subscript $K$ to the symbol representing a particular space of functions or distributions in order to denote the corresponding subspace of $K$-invariant elements; for instance, $L^p_K(G)$ denotes the Banach space of $K$-invariant $L^p$ functions on $G$. Since
\[T_k (f * g) = (T_k f) * (T_k g), \qquad T_k(f^*) = (T_k f)^*,\]
it is immediately proved that $L^1_K(G)$ is a Banach $*$-subalgebra of $L^1(G)$. We also define the projection onto $K$-invariant elements:
\[P_K : f \mapsto \int_K T_k f \,dk,\]
where the integration is with respect to the Haar measure on $K$ with mass $1$. This projection satisfies
\[P_K(f * (P_K g)) = P_K( (P_K f) * g) = (P_K f) * (P_K g), \qquad P_K(f^*) = (P_K f)^*.\]

Among the left-invariant differential operators on $G$, we can consider those which are $K$-invariant, i.e., which commute with $T_k$ for all $k \in K$. The set $\Diff_K(G)$ of left-invariant $K$-invariant differential operators on $G$ is a $*$-subalgebra of $\Diff(G)$, which is finitely generated since $K$ is compact (cf.\ \cite{helgason_differential_1962}, Corollary~X.2.8 and Theorem~X.5.6). Moreover, $\Diff_K(G)$ contains an elliptic operator (e.g., the Laplace-Beltrami operator associated with a left-invariant $K$-invariant metric on $G$, cf.\ \cite{helgason_groups_1984}, proof of Proposition~IV.2.2). Therefore, if one chooses a finite system of formally self-adjoint generators of $\Diff_K(G)$, the only property which is missing in order to have a weighted subcoercive system is commutativity of $\Diff_K(G)$.

In fact, under these hypotheses, the following properties are equivalent (cf.\ \cite{thomas_infinitesimal_1984}, or \cite{wolf_harmonic_2007}, \S8.3):
\begin{itemize}
\item $\Diff_K(G)$ is a commutative $*$-subalgebra of $\Diff(G)$;
\item $L^1_K(G)$ is a commutative Banach $*$-subalgebra of $L^1(G)$.
\end{itemize}
The latter condition corresponds to the fact that $(G \rtimes K, K)$ is a \emph{Gelfand pair}\index{Gelfand pair}\footnote{If $S$ is a locally compact group, and $K$ a compact subgroup of $S$, then $(S,K)$ is said to be a Gelfand pair if the (convolution) algebra $L^1(K;S;K)$ of bi-$K$-invariant integrable functions on $S$ is commutative. The study of a Gelfand pair $(S,K)$ involves the $K$-homogeneous space $S/K$. In the case $S = G \rtimes K$, the space $S/K$ can be identified with $G$, and most of the notions and results about Gelfand pairs can be rephrased in terms of the algebraic structure of $G$ (see, e.g., \cite{carcano_commutativity_1987,benson_gelfand_1990}); this has to be kept in mind when comparing the results presented in the literature with the ones mentioned here. Notice that, according to Vinberg's reduction theorem (see \cite{vinberg_commutative_2001}), Gelfand pairs in ``semidirect-product form'' are one of the two structural constituents of general Gelfand pairs.}. We now summarize in our context some of the main notions and results from the general theory of Gelfand pairs, for which we refer mainly to \cite{faraut_analyse_1982,wolf_harmonic_2007,helgason_differential_1962,helgason_groups_1984}. In the following, we always suppose that $L^1_K(G)$ is commutative; consequently, $G$ must be unimodular (cf.\ \cite{helgason_groups_1984}, Theorem~IV.3.1).

The $K$-invariant joint eigenfunctions $\phi$ of the operators in $\Diff_K(G)$ with $\phi(e) = 1$ are called \emph{$K$-spherical functions}. The set $\GS_K$ of bounded $K$-spherical functions, with the topology induced by the weak-$*$ topology of $L^\infty(G)$, is identified with the Gelfand spectrum $\GS(L^1_K(G))$ of the commutative Banach $*$-algebra $L^1_K(G)$, via the correspondence which associates to a bounded $K$-spherical function $\phi$ the (multiplicative) linear functional $f \mapsto \langle f, \phi \rangle$ on $L^1_K(G)$. According to this identification, the \index{transform!Gelfand}Gelfand transform --- which is also called the \emph{$K$-spherical Fourier transform} --- of an element $f \in L^1_K(G)$ is the function
\[\Gelf_K f : \GS_K \ni \phi \mapsto \langle f, \phi \rangle \in \C.\]

Let $\PP_K$ denote the set of $K$-invariant functions $\phi$ of positive type on $G$ with $\phi(e) = 1$. Then $\PP_K$ is a closed and convex subset of $\PP_1$, whose extreme points are the elements of $\GS_K^+ = \GS_K \cap \PP_K$, i.e., the $K$-spherical functions of positive type; in particular, by the Krein-Milman theorem, the convex hull of $\GS_K^+$ is weakly-$*$ dense in $\PP_K$. By restricting $K$-spherical transforms to $\GS_K^+$, one obtains that
\[(\Gelf_K (f^*))|_{\GS_K^+} = \overline{(\Gelf_K f)|_{\GS_K^+}},\]
therefore the map $f \mapsto (\Gelf_K f)|_{\GS_K^+}$ is a $*$-homomorphism $L^1_K(G) \to C_0(\GS_K^+)$ with unit norm and dense image. Moreover, there exists a unique positive regular Borel measure $\sigma_K$ on $\GS_K^+$, which is called the \index{Plancherel measure!for a Gelfand pair}\emph{Plancherel measure} of the Gelfand pair $(G \rtimes K, K)$, such that
\[\int_G |f(x)|^2 \,dx = \int_{\GS_K^+} |\Gelf_K f(\phi)|^2 \,d\sigma_K(\phi)\]
for all $f \in L^1_K \cap L^2_K(G)$; further, the map $f \mapsto (\Gelf_K f)|_{\GS_K^+}$ extends to an isomorphism $L^2_K(G) \to L^2(\GS_K^+,\sigma_K)$.

Choose now a finite system $L_1,
\dots,L_n$ of formally self-adjoint generators of $\Diff_K(G)$. As we have seen before, the system $L_1,\dots,L_n$ is a weighted subcoercive system on $G$. If the map $\evmap_L$ of \S\ref{section:eigenfunctions} is extended to all the joint eigenfunctions of $L_1,\dots,L_n$, then it is known (see \cite{ferrari_ruffino_topology_2007}) that
\[\evmap_L|_{\GS_K} : \GS_K \to \C^n\]
is a homeomorphism with its image $\evmap_L(\GS_K)$, which is a closed subset of $\C^n$. Notice that
\[\GS_K^+ \subseteq \PP_L, \qquad \evmap_L(\GS_K^+) = \evmap_L(\PP_L);\]
consequently, for every $\lambda \in \evmap_L(\PP_L)$, there exists a unique element of $\evmap_L^{-1}(\lambda) \cap \PP_L$ which is a $K$-spherical function (cf.\ \cite{helgason_groups_1984}, Proposition~IV.2.4).

The embedding $\evmap_L$ allows us to compare the notions of $K$-spherical transform $\Gelf_K$ and Plancherel measure $\sigma_K$ of the Gelfand pair $(G \rtimes K, K)$ with the notions of kernel transform $\Kern_L$ and Plancherel measure $\sigma$ associated with the weighted subcoercive system $L_1,\dots,L_n$. Notice that, in the case of nilpotent $G$ and Schwartz multipliers, results similar to the following are proved in \cite{astengo_gelfand_2009,fischer_gelfand_2008} (cf.\ also \S1.7 of \cite{gangolli_harmonic_1988}).

As a preliminary remark, notice that from Proposition~\ref{prp:automorphismskernel} it follows that, for every bounded Borel $m : \R^n \to \C$, the corresponding kernel $\Kern_L m$ is $K$-invariant.

\begin{prp}\label{prp:gelfandkernel}
Let $f \in L^1_K(G)$. Then there exists $m \in C_0(\R^n)$ such that
\[\Gelf_K f(\phi) = m(\evmap_L(\phi)) \qquad\text{for $\phi \in \GS_K^+$.}\]
For any of such $m$, and for every unitary representation $\pi$ of $G$, we have
\[\pi(f) = m(d\pi(L_1),\dots,d\pi(L_n)),\]
and in particular
\[f = \Kern_L m.\]
\end{prp}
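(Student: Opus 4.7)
My plan has three stages, with the main obstacle being a representation-independent $C^*$-norm estimate at the end.

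\textbf{Stage 1 (construction of $m$).} I would first observe that $\evmap_L|_{\GS_K^+}$ is a homeomorphism onto a closed subset of $\R^n$, which allows Tietze--Urysohn to furnish $m$. Indeed, $\evmap_L|_{\GS_K}$ is already a homeomorphism onto a closed subset of $\C^n$, and the recalled identity $\evmap_L(\GS_K^+) = \evmap_L(\PP_L)$ together with Proposition~\ref{prp:eigentopologies} puts this image inside $\R^n$ as a closed subset. Since $\Gelf_K f$ lies in $C_0(\GS_K)$ by the standard Gelfand theory of commutative Banach algebras without unit, its restriction to the closed subset $\GS_K^+$ transports through the homeomorphism to an element of $C_0(\evmap_L(\PP_L))$; extension by Tietze--Urysohn, performed on the one-point compactification of $\R^n$ in order to preserve vanishing at infinity, produces the required $m \in C_0(\R^n)$.

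\textbf{Stage 2 ($K$-averaging identity).} I would then prove the stronger identity $\langle f, \phi\rangle = m(\evmap_L(\phi))$ for every $\phi \in \PP_L$, not merely for $\phi \in \GS_K^+$. Setting $P_K\phi := \int_K T_k\phi\,dk$, each $T_k\phi$ remains of positive type with value $1$ at $e$ (since $k$ is an automorphism), and $K$-invariance of every $L_j$ yields $L_j T_k\phi = T_k L_j\phi = \evmap_{L,j}(\phi)\,T_k\phi$; averaging, $P_K\phi$ is a $K$-invariant positive-type joint eigenfunction of $L_1,\dots,L_n$ with value $1$ at $e$, and since these generators span $\Diff_K(G)$, $P_K\phi \in \GS_K^+$ with $\evmap_L(P_K\phi) = \evmap_L(\phi)$. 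Compactness of $K$ forces the continuous character $c\colon\Aut(G)\to\R^+$ to be trivial on $K$, so every $T_k$ preserves Haar measure; combined with $K$-invariance of $f$, this gives $\langle f,\phi\rangle = \langle f, P_K\phi\rangle = \Gelf_K f(P_K\phi) = m(\evmap_L(\phi))$.

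\textbf{Stage 3 (the operator identity).} Approximate $m$ uniformly by $m_k \in \JJ_L$ (Lemma~\ref{lem:Jdensity}). By the spectral theorem $m_k(d\pi(L)) \to m(d\pi(L))$ in operator norm, and Proposition~\ref{prp:Jl1} identifies $m_k(d\pi(L)) = \pi(\Kern_L m_k)$. Since $L_1,\dots,L_n$ generate $\Diff_K(G)$, the kernels $\Kern_L m_k$ lie in $L^1_K(G)$, and by Proposition~\ref{prp:weakstrongeigenfunctions} combined with Stage~2 the $K$-spherical transform of $\Kern_L m_k - f \in L^1_K(G)$ evaluated at $\phi \in \GS_K^+$ equals $(m_k - m)(\evmap_L(\phi))$, whose sup-norm is bounded by $\|m_k - m\|_\infty \to 0$. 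The hard part will be the representation-independent estimate
\[\|\pi(h)\|_{\Bdd(\HH)} \leq \|\Gelf_K h\|_{L^\infty(\GS_K^+)} \qquad\text{for all } h \in L^1_K(G),\]
valid for every unitary representation $\pi$ of $G$. I would obtain it by Gelfand--Naimark applied to the commutative $C^*$-subalgebra $A := \overline{\pi(L^1_K(G))}$ of $\Bdd(\HH)$: every character $\chi$ of $A$ pulls back to $\chi\circ\pi$, which is a non-zero continuous multiplicative functional on $L^1_K(G)$, and the computation $\chi\circ\pi(h^* * h) = \chi(\pi(h)\pi(h)^*) = |\chi(\pi(h))|^2 \geq 0$ shows $\chi\circ\pi$ is positive; by the theory of Gelfand pairs it must be evaluation against some $\phi_\chi \in \GS_K^+$, giving $|\chi(\pi(h))| = |\Gelf_K h(\phi_\chi)| \leq \|\Gelf_K h\|_\infty$ and hence the asserted bound upon taking the supremum over $\chi$. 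Applied to $h = \Kern_L m_k - f$ this yields $\pi(\Kern_L m_k) \to \pi(f)$ in operator norm, so $\pi(f) = m(d\pi(L))$; specializing to the regular representation recovers $f = \Kern_L m$.
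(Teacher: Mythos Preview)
Your argument is correct in outline, with one justification that needs sharpening, and it takes a somewhat different route from the paper in Stage~3.

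In Stage~3, the step ``$\chi\circ\pi$ is positive on $L^1_K(G)$, hence by the theory of Gelfand pairs corresponds to some $\phi_\chi\in\GS_K^+$'' is not valid as stated: positivity of a multiplicative functional on $L^1_K(G)$ (equivalent here to hermitianity) does not by itself force the associated bounded spherical function to be of positive type. What you actually have, and what does suffice, is that $\chi$ is a character of the $C^*$-algebra $A$, so that $|\chi\circ\pi(h)|\le\|\pi(h)\|\le\|h\|_{C^*(G)}$; hence $\chi\circ\pi$ extends to a character of the closure $C^*_K(G)$ of $L^1_K(G)$ in $C^*(G)$, and \emph{those} characters are precisely $\GS_K^+$ (extend to a state on $C^*(G)$ via the positive projection $P_K$, obtaining a $K$-invariant function of positive type on which $L^1_K(G)$ acts multiplicatively). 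With this correction your estimate $\|\pi(h)\|\le\|\Gelf_K h\|_{L^\infty(\GS_K^+)}$ goes through. Incidentally, Stage~2 is correct but unused: in Stage~3 you only evaluate at $\phi\in\GS_K^+$, for which Stage~1 already gives $\Gelf_K f(\phi)=m(\evmap_L(\phi))$.

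The paper proceeds differently: it extends $u\mapsto\breve u$ from $\JJ_L$ to a $*$-homomorphism $\Phi\colon C_0(\R^n)\to C^*(G)$ and shows $f=\Phi(m)$ there by testing against all states $\omega_\phi$ for $\phi\in\PP_1$, reducing first from $\PP_1$ to $\PP_K$ via the projection $P_K$ and then from $\PP_K$ to $\GS_K^+$ via Krein--Milman (the extreme points of $\PP_K$ being $\GS_K^+$). You instead work in each $\Bdd(\HH)$ and use Gelfand theory on the commutative $C^*$-subalgebra $\overline{\pi(L^1_K(G))}$. Both routes ultimately hinge on identifying the relevant functionals with $\GS_K^+$; the paper's is a single argument in $C^*(G)$ covering all $\pi$ at once, while yours re-derives, representation by representation, what amounts to the norm identity $\|h\|_{C^*(G)}=\|\Gelf_K h\|_{L^\infty(\GS_K^+)}$ that the paper's argument also implicitly contains.
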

\begin{proof}
Since $\Gelf_K f|_{\GS_K^+} \in C_0(\GS_K^+)$, and since $\evmap_L|_{\GS_K^+}$ is a homeomorphism with its image, which is a closed subset of $\R^n$, then by the Tietze-Urysohn extension theorem we can find $m \in C_0(\R^n)$ extending $(\Gelf_K f) \circ (\evmap_L|_{\GS_K^+})^{-1}$.

By Proposition~\ref{prp:Jl1}, for every $u \in \JJ_L$ and every unitary representation $\pi$ of $G$, we have
\[\pi(\breve u) = u(d\pi(L_1),\dots,d\pi(L_n));\]
therefore the map
\[\JJ_L \ni u \mapsto \breve u \in L^1(G)\]
extends by density (see Proposition~\ref{prp:Jdensity}) to a $*$-homomorphism
\[\Phi : C_0(\R^n) \to C^*(G),\]
and we have
\[\pi(\Phi(u)) = u(d\pi(L_1),\dots,d\pi(L_n))\]
for all $u \in C_0(\R^n)$ and all unitary representations $\pi$ of $G$. The conclusion will then follow if we prove that $f = \Phi(m)$ as elements of $C^*(G)$.

Recall that every $\phi \in \PP_1$ defines a positive continuous functional $\omega_\phi$ on $C^*(G)$ with unit norm, extending
\[L^1(G) \ni h \mapsto \langle h, \phi \rangle \in \C.\]
In fact, the norm of an arbitrary $g \in C^*(G)$ is given by
\[\|g\|_* = \sup_{\phi \in \PP_1} \omega_\phi(g * g^*)\]
(see \cite{folland_course_1995}, Proposition~7.1); therefore, in order to conclude, it will be sufficient to show that the set $A$ of the $\phi \in \PP_1$ such that
\[\omega_\phi( (f - \Phi(m)) * (f - \Phi(m))^* ) = 0\]
coincides with the whole $\PP_1$.

Notice that both $f$ and $\Phi(m)$ belong to the closure $C^*_K(G)$ of $L^1_K(G)$ in $C^*(G)$, and it is easily checked that, for $\phi \in \PP_1$ and $g \in C^*_K(G)$,
\[\omega_\phi(g) = \omega_{P_K \phi}(g);\]
consequently, we are reduced to prove that $\PP_K \subseteq A$. In fact, since $A$ is a closed convex subset of $\PP_1$, it is sufficient to prove the inclusion $\GS_K^+ \subseteq A$.

On the other hand, the functionals $\omega_\phi$ for $\phi \in \GS_K^+$ are multiplicative on $L^1_K(G)$, thus they are also multiplicative on $C^*_K(G)$ by continuity, therefore
\[\omega_\phi( (f - \Phi(m)) * (f - \Phi(m))^* ) = |\omega_\phi(f - \Phi(m))|^2 = |\Gelf_K f(\phi) - m(\evmap_L(\phi))|^2 = 0\]
for every $\phi \in \GS_K^+$, and we are done.
\end{proof}

Thus, by applying first $\Gelf_K$ and then $\Kern_L$, we are back at the beginning. The composition of the transforms in reverse order is considered in the following statement, which gives also an improvement of Proposition~\ref{prp:riemannlebesgue1} in this particular context.

\begin{cor}
Let $m : \R^n \to \C$ be a bounded Borel function such that $\breve m \in L^1(G)$. Then $\breve m \in L^1_K(G)$ and
\[\Gelf_K (\Kern_L m) (\phi) = m(\evmap_L(\phi)) \qquad\text{for all $\phi \in \GS_K^+$ with $\evmap_L(\phi) \in \Sigma$.}\]
In particular $m|_\Sigma \in C_0(\Sigma)$.
\end{cor}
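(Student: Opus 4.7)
The plan is to apply Proposition~\ref{prp:gelfandkernel} to $\breve m$ itself, viewed as an element of $L^1_K(G)$.

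First, I check the $K$-invariance. By the remark immediately preceding Proposition~\ref{prp:gelfandkernel}, the kernel $\Kern_L m$ of any bounded Borel function is automatically $K$-invariant; this is a direct consequence of Proposition~\ref{prp:automorphismskernel} together with the fact that $c(k) = 1$ for every $k \in K$, since the Lie group homomorphism $c : \Aut(G) \to \R^+$ sends the compact subgroup $K$ into the only compact subgroup of the multiplicative group $\R^+$, which is trivial. Combining this with the hypothesis $\breve m \in L^1(G)$ yields $\breve m \in L^1_K(G)$.

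Now I apply Proposition~\ref{prp:gelfandkernel} with $f = \breve m$: it furnishes some $\tilde m \in C_0(\R^n)$ such that
\[
\Gelf_K(\breve m)(\phi) = \tilde m(\evmap_L(\phi)) \qquad\text{for all } \phi \in \GS_K^+,
\]
and, crucially, $\breve m = \Kern_L \tilde m$. Since we also have $\breve m = \Kern_L m$ by definition, the injectivity of $\Kern_L : L^\infty(\sigma) \to \Cv^2(G)$ (Lemma~\ref{lem:composition}(a)) forces $m = \tilde m$ as elements of $L^\infty(\sigma)$, i.e., $\sigma$-almost everywhere on $\Sigma = \supp \sigma$. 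Since $\tilde m|_\Sigma$ is continuous and vanishes at infinity on $\Sigma$, the restriction $m|_\Sigma$ agrees $\sigma$-a.e.\ with an element of $C_0(\Sigma)$; after identifying $m|_\Sigma$ with this continuous representative, the pointwise identity
\[
\Gelf_K(\breve m)(\phi) = \tilde m(\evmap_L(\phi)) = m(\evmap_L(\phi))
\]
holds for every $\phi \in \GS_K^+$ with $\evmap_L(\phi) \in \Sigma$, and in particular $m|_\Sigma \in C_0(\Sigma)$.

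I do not expect any substantive obstacle: the argument is a straightforward dualization of Proposition~\ref{prp:gelfandkernel}. The one subtlety worth flagging is that the pointwise statement $\Gelf_K(\breve m)(\phi) = m(\evmap_L(\phi))$ makes unambiguous sense only after fixing a representative of the $L^\infty(\sigma)$-class of $m$; the natural choice is precisely the continuous extension $\tilde m|_\Sigma$ provided by Proposition~\ref{prp:gelfandkernel}, and the injectivity of $\Kern_L$ ensures that no other choice is compatible with the datum $\breve m$.
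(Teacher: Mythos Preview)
Your proposal is correct and follows essentially the same route as the paper's own proof: apply Proposition~\ref{prp:gelfandkernel} to $f=\breve m$, obtain a continuous $\tilde m$ (called $u$ in the paper) with $\breve m=\Kern_L\tilde m$, and use the injectivity of $\Kern_L$ on $L^\infty(\sigma)$ to conclude that $m$ and $\tilde m$ agree on $\Sigma$. Your added care about the $\sigma$-a.e.\ versus pointwise distinction is a welcome clarification of what the paper states more tersely.
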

\begin{proof}
We already know that $\breve m$ is $K$-invariant, so that $\breve m \in L^1_K(G)$. Therefore, by Proposition~\ref{prp:gelfandkernel}, we can find $u \in C_0(\R^n)$ such that
\[\Gelf_K \breve m (\phi) = u(\evmap_L(\phi))\]
for all $\phi \in \GS_K^+$, and we have $\breve m = \breve u$, i.e.,
\[m(L_1,\dots,L_n) = u(L_1,\dots,L_n),\]
which means that $m$ and $u$ must coincide on the joint spectrum $\Sigma$ of $L_1,\dots,L_n$, and we are done.
\end{proof}

Finally, we compare the Plancherel measures $\sigma$ and $\sigma_K$.

\begin{cor}
We have
\[\sigma = \evmap_L|_{\GS_K^+}(\sigma_K), \qquad \sigma_K = (\evmap_L|_{\GS_K^+})^{-1}(\sigma).\]
\end{cor}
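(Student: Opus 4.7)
The plan is to prove the first identity $\sigma = (\evmap_L|_{\GS_K^+})_*\sigma_K$ directly; the second then follows at once, since $\evmap_L|_{\GS_K^+}$ is a homeomorphism onto $\evmap_L(\GS_K^+) \supseteq \evmap_L(\PP_L) \supseteq \Sigma = \supp \sigma$ and $\sigma_K$ is supported in $\GS_K^+$, so the push-forward can be inverted on the support. Write $\tilde\sigma = (\evmap_L|_{\GS_K^+})_*\sigma_K$.

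The key is to compute $\|\breve u\|_2^2$ in two ways, for $u \in \JJ_L$. First I would verify that $\breve u$ is $K$-invariant: since every $L_j \in \Diff_K(G)$, Corollary~\ref{cor:automorphismskernel} applied to $k \in K$ gives $P_k = \id$ and thus $\Kern_L u = c(k)\, T_k \Kern_L u$, where $c(k) = 1$ because the continuous homomorphism $c : \Aut(G) \to \R^+$ is trivial on the compact subgroup $K$; hence $\breve u$ is $K$-invariant and lies in $L^{1;\infty}_K(G) \cap C_0^\infty(G) \subseteq L^1_K \cap L^2_K(G)$. Since $\GS_K^+ \subseteq \PP_L$, Proposition~\ref{prp:weakstrongeigenfunctions} yields
\[\Gelf_K \breve u(\phi) = \langle \breve u, \phi \rangle = u(\evmap_L(\phi)) \qquad\text{for all } \phi \in \GS_K^+;\]
this identity holds on \emph{all} of $\GS_K^+$, not merely on $\evmap_L^{-1}(\Sigma)$. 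Combining the Gelfand-pair Plancherel formula with Theorem~\ref{thm:plancherel} then gives
\[\int_{\R^n} |u|^2 \,d\sigma = \|\breve u\|_2^2 = \int_{\GS_K^+} |\Gelf_K \breve u|^2 \,d\sigma_K = \int_{\R^n} |u|^2 \,d\tilde\sigma.\]

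I would next extend this to $\int f \,d\sigma = \int f \,d\tilde\sigma$ for every $f \in \JJ_L$. Since each $u \in \JJ_L$ is real-valued, polarisation yields the equality for products $uv$ with $u,v \in \JJ_L$; and every generator $e^{-q}$ factors as $e^{-q/2} \cdot e^{-q/2}$ with $e^{-q/2} \in \JJ_L$, because $(q/2)(L) = \tfrac{1}{2} q(L)$ is weighted subcoercive as a positive scalar multiple of such an operator, so $\JJ_L$ is linearly spanned by products of pairs of its elements. To upgrade to $\sigma = \tilde\sigma$, take $f = e^{-p_0} \in \JJ_L$: then both $e^{-p_0}\sigma$ and $e^{-p_0}\tilde\sigma$ are finite positive Borel measures. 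For any $g \in C_c(\R^n)$ the function $g e^{p_0}$ lies in $C_c(\R^n) \subseteq C_0(\R^n)$ and is uniformly approximated by a sequence $u_j \in \JJ_L$ (Lemma~\ref{lem:Jdensity}); then $u_j e^{-p_0} \in \JJ_L$ converges uniformly to $g$, and dominated convergence (majorant $M e^{-p_0}$, with $M = \sup_j \|u_j\|_\infty$) gives
\[\int_{\R^n} g \,d\sigma = \lim_j \int_{\R^n} u_j e^{-p_0} \,d\sigma = \lim_j \int_{\R^n} u_j e^{-p_0} \,d\tilde\sigma = \int_{\R^n} g \,d\tilde\sigma.\]
Since $\sigma$ and $\tilde\sigma$ are Radon measures on $\R^n$ agreeing on $C_c(\R^n)$, the Riesz representation theorem forces $\sigma = \tilde\sigma$.

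The main obstacle is more psychological than technical: one must recognise that Proposition~\ref{prp:weakstrongeigenfunctions} makes $\langle \breve u, \phi \rangle$ \emph{exactly} computable for every $\phi \in \GS_K^+$, so that the two Plancherel identities for $\breve u$ can be compared term-by-term without worrying about how $\JJ_L$-functions may be extended off $\Sigma$ by the Tietze-Urysohn procedure of Proposition~\ref{prp:gelfandkernel}; everything else is a routine polarisation-and-density bootstrap.
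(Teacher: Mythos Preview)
Your argument is correct, and it takes a genuinely different route from the paper's. The paper pushes $\sigma$ \emph{back} to $\GS_K^+$ and invokes the \emph{uniqueness} part of the Gelfand-pair Plancherel theorem: for arbitrary $f \in L^1_K \cap L^2_K(G)$ it uses Proposition~\ref{prp:gelfandkernel} (hence the $C^*(G)$ extension argument and Tietze--Urysohn) to write $f = \breve m$ with $\Gelf_K f = m \circ \evmap_L$, then applies Theorem~\ref{thm:plancherel} and the change-of-variable formula to conclude. You instead push $\sigma_K$ \emph{forward} to $\R^n$ and verify $\sigma = \tilde\sigma$ directly on the test class $\JJ_L$, using only Proposition~\ref{prp:weakstrongeigenfunctions}, the two Plancherel identities, and a polarisation-plus-density bootstrap; this bypasses Proposition~\ref{prp:gelfandkernel} entirely. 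The trade-off is that the paper's proof is shorter because it outsources the identification to the black-box uniqueness of $\sigma_K$, whereas your approach is more self-contained but must redo a Riesz-type uniqueness argument by hand. Your observation that $e^{-q/2} \in \JJ_L$ (so that $\JJ_L$ is spanned by products of pairs) is the small but essential point making the bootstrap go through.
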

\begin{proof}
Recall that $\evmap_L|_{\GS_K^+}$ is a homeomorphism with its image, which is a closed subset of $\R^n$ containing the support $\Sigma$ of $\sigma$, thus the two equalities to be proved are equivalent. 

Set $\tilde\sigma = (\evmap_L|_{\GS_K^+})^{-1}(\sigma)$. Then $\tilde\sigma$ is a positive regular Borel measure on $\GS_K^+$. Moreover, if $f \in L^1_K \cap L^2_K(G)$, then by Proposition~\ref{prp:gelfandkernel} there is $m \in C_0(\R^n)$ such that
\[\Gelf_K f(\phi) = m(\evmap_L(\phi)) \qquad\text{for all $\phi \in \GS_K^+$}\]
and
\[f = \breve m.\]
Since $f \in L^2(G)$, by Theorem~\ref{thm:plancherel} we also have $m \in L^2(\sigma)$, and
\[\int_G |f(x)|^2 \,dx = \int_{\R^n} |m|^2 \,d\sigma = \int_{\GS_K^+} |\Gelf_K f|^2 \,d\tilde\sigma\]
by the change-of-variable formula for push-forward measures. By the arbitrariness of $f \in L^1_K \cap L^2_K(G)$ and the uniqueness of the Plancherel measure of a Gelfand pair, we obtain that $\sigma_K = \tilde \sigma$, and we are done.
\end{proof}

We have thus shown that the study of the algebra $\Diff_K(G)$ of differential operators associated with a Gelfand pair $(G \rtimes K, K)$ fits into the more general setting of weighted subcoercive systems, where in general there is no compact group $K$ of automorphisms which determines the algebra of operators.

It should be noticed that the hypothesis of Gelfand pair is quite restrictive. We have already mentioned that, if $L^1_K(G)$ is commutative, then $G$ must be unimodular. Moreover, the algebra $\Diff_K(G)$ always contains an elliptic operator, while a general weighted subcoercive operator is not even analytic hypoelliptic (see, e.g., \cite{helffer_conditions_1982}). Further, if $G$ is solvable, then $G$ must have polynomial growth, and, if $G$ is nilpotent, then $G$ is at most $2$-step (see \cite{benson_gelfand_1990}).

In this last case, notice that it is always possible to find a family of automorphic dilations on $G$ which commute with the elements of $K$, and any system $L_1,\dots,L_n$ of homogeneous formally self-adjoint generators of $\Diff_K(G)$ is a homogeneous weighted subcoercive system. On the other hand, the results of this paper can be applied to homogeneous groups which are $3$-step or more, and which therefore do not belong to the realm of Gelfand pairs. Take for instance the free $3$-step nilpotent group $N_{2,3}$ with $2$ generators, defined by the relations
\[[X_1,X_2] = Y, \qquad [X_1,Y] = T_1, \qquad [X_2,Y] = T_2,\]
where $X_1,X_2,Y,T_1,T_2$ is a basis of its Lie algebra, and notice that the group $SO_2$ acts on $N_{2,3}$ by automorphisms given by simultaneous rotations of $\R X_1 + \R X_2$ and $\R T_1 + \R T_2$. Although the whole algebra of $SO_2$-invariant left-invariant differential operators on $N_{2,3}$ cannot be commutative, the operators
\[-(X_1^2 + X_2^2), \qquad 2 X_2 T_1 - 2 X_1 T_2 - Y^2, \qquad -(T_1^2 + T_2^2)\]
generate a non-trivial homogeneous commutative subalgebra to which our results apply, as well as they apply to the larger algebra generated by
\[-(X_1^2 + X_2^2), \qquad 2 X_2 T_1 - 2 X_1 T_2 - Y^2, \qquad -i T_1, \qquad -i T_2\]
(which is no longer made of $SO_2$-invariant operators).

\section*{Acknowledgements}
I thank Fulvio Ricci for introducing me to the theory of Gelfand pairs, and for his continuous support and encouragement. I also thank A.~F.~M.\ ter Elst for the exchange of comments about his work.

\bibliographystyle{abbrv}
\bibliography{../../multipliers}

\end{document}